\newtheorem{theorem}{Theorem}[section]
\newtheorem{lemma}[theorem]{Lemma}
\newtheorem{corollary}[theorem]{Corollary}
\newtheorem{definition}[theorem]{Definition}
\newtheorem{proposition}[theorem]{Proposition}
\newtheorem{example}[theorem]{Example}
\def\cP{{\mathcal P}} 
\def\cC{{\mathcal C}} \def\cD{{\mathcal D}}    
\def\cH{{\mathcal H}}     
 \def\cO{{\mathcal O}} 
    \def\cQ{{\mathcal Q}}  
\def\cA{{\mathcal A}}    
\def\cK{{\mathcal K}}
\def\bbN{{\mathbb N}}  \def\bbZ{{\mathbb Z}}  \def\bbQ{{\mathbb Q}}
    \def\bbF{{\mathbb F}}
\def\bbC{{\mathbb C}}
         \def\bfU{{\bf U}}
\def\Hom{\mbox{\rm Hom}}  
\def\Ext{\mbox{\rm Ext}}   
\def\dim{\mbox{\rm dim}}   
\def\Aut{\mbox{\rm Aut}}
\def\mod{\mbox{\rm mod}}  \def\tr{\mbox{\rm tr}}
\def\rep{\mbox{\rm Rep}\,}
\def\bfV{{\mathbf V}}
\def\rep{{\rm rep}}
\def\bfV{\mathbf{V}}
\def\bfW{\mathbf{W}}
\def\bfE{\mathbf{E}}
\def\bfG{\mathbf{G}}
\def\bfQ{\mathbf{Q}}
\def\bfU{\mathbf{U}}
\def\bfF{\mathbf{F}}
\def\Ind{\mathbf{Ind}}
\def\Res{\mathbf{Res}}
\def\ind{\mathbf{ind}}
\def\res{\mathbf{res}}
\def\cN{\mathcal{N}}
\begin{document}

\title[]{The parity of Lusztig's restriction functor and Green's formula for a quiver with automorphism}

\author{Jiepeng Fang, Yixin Lan, Yumeng Wu}
\address{School of Mathematical Sciences, Peking University, Beijing 100871, P.R. China}
\email{fangjp@math.pku.edu.cn (J. Fang)}
\address{Academy of Mathematics and Systems Science, Chinese Academy of Sciences, Beijing 100190, P.R. China}
\email{lanyixin@amss.ac.cn (Y. Lan)}
\address{Department of Mathematical Sciences, Tsinghua University, Beijing 100084, P. R. China}
\email{wuym20@mails.tsinghua.edu.cn (Y. Wu)}

\subjclass[2000]{16G20, 17B37}

\keywords{quiver with automorphism, perverse sheaf, restriction functor, Green's formula}

\bibliographystyle{abbrv}

\maketitle

\begin{abstract}
In \cite{fang2023parity}, Fang-Lan-Xiao proved a formula about Lusztig's induction and restriction functors which can induce Green's formula for the path algebra of a quiver over a finite field via the trace map. In this paper, we generalize their formula to that for the mixed semisimple perverse sheaves for a quiver with an automorphism. By applying the trace map, we obtain Green's formula for any finite-dimensional hereditary algebra over a finite field. 
\end{abstract}

\setcounter{tocdepth}{1}\tableofcontents

\section{Introduction}

In \cite{ringel1990hall,Ringel-1990}, Ringel introduced the Hall algebra for any finite-dimensional hereditary algebra $\Lambda$ over a finite field $\bbF_q$. More precisely, the Hall algebra $\cH(\mod_{\bbF_q}\Lambda)$ is a $\bbC$-algebra with a basis $\{u_{[M]}|[M]\in \textrm{Iso}(\mod_{\bbF_q}\Lambda)\}$ parametrized by the isomorphism classes of finitely generated right $\Lambda$-modules over $\bbF_q$ and the multiplication
$$u_{[M]}*u_{[N]}=\sum_{[L]}\sqrt{q}^{\langle M,N\rangle}g^L_{MN}u_{[L]},$$
where $\langle M,N\rangle$ is the Euler form and $g^L_{MN}$ is the number of submodules $L'$ of $L$ satisfying $L/L'\cong M,L'\cong N$. In particular, if $\Lambda$ is of finite type, he proved that the Hall algebra is isomorphic to the specialization of the nilpotent part ${\rm U}_v^-$ at $v=\sqrt{q}$ of the quantum group defined by the Cartan datum associated to $\Lambda$.

In \cite{green1995hall}, Green equipped the Hall algebra $\cH(\mod_{\bbF_q}\Lambda)$ with a comultiplication 
$$\Delta(u_{[L]})=\sum_{[M],[N]}\sqrt{q}^{\langle M,N\rangle}g^L_{MN}a_Ma_Na_L^{-1}u_{[M]}\otimes u_{[N]},$$
where $a_{M}=|\Aut_\Lambda(M)|$ is the order of the automorphism group, and $a_N,a_L$ are similar. He proved the following non-trivial homological formula 
\begin{equation}\label{introduction Green's formula}
\begin{aligned}
&a_Ma_Na_{M'}a_{N'}\sum_{[L]}g^L_{MN}g^L_{M'N'}a_L^{-1}\\
=&\sum_{[M_1],[M_2],[N_1],[N_2]}q^{-\langle\hat{M_1},\hat{N_2} \rangle}g^M_{M_1M_2}g^N_{N_1N_2}g^{M'}_{M_1N_1}g^{N'}_{M_2N_2}a_{M_1}a_{M_2}a_{N_1}a_{N_2},
\end{aligned}
\end{equation}
for any $M,N,M',N'\in \mod_{\bbF_q}\Lambda$, equivalently, 
\begin{align}\label{equivalent formula}
\Delta(u_{[M]}*u_{[N]})=\Delta(u_{[M]})*\Delta(u_{[N]})
\end{align}
for any $u_{[M]},u_{[N]}\in \cH(\mod_{\bbF_q}\Lambda)$, such that $\Delta$ is an algebra homomorphism with respect to a twisted multiplication on $\cH(\mod_{\bbF_q}\Lambda)\otimes \cH(\mod_{\bbF_q}\Lambda)$, see Theorem \ref{Green's formula} for details. Moreover, he proved that the composition subalgebra $\cC(\mod_{\bbF_q}\Lambda)$ generated by elements corresponding to simple modules is isomorphic to the specialization of ${\rm U}_v^-$ at $v=\sqrt{q}$ as bialgebras. 

In \cite{Lusztig-1991, lusztig2010introduction}, Lusztig established the framework of the geometric realization for Hall algebra and categorified ${\rm U}_v^-$ by perverse sheaves. For any finite quiver without loops $Q$, he considered the affine variety $\bfE_\nu$ together with the connected algebraic group $\bfG_\nu$-action which parametrizes the isomorphism classes of representations of $Q$ having dimension vector $\nu$ over the algebraic closure $k=\overline{\bbF}_q$, and defined a subcategory $\cQ_\nu$ of $\cD^b_{\bfG_\nu}(\bfE_\nu)$ the $\bfG_\nu$-equivariant bounded derived category of $\overline{\bbQ}_l$-constructible sheaves on $\bfE_\nu$ consisting of the direct sums of simple perverse sheaves which appear as direct summands in the direct images of constant sheaves on the flag varieties up to shifts and Tate twists. For any $\nu,\nu',\nu''$ satisfying $\nu=\nu'+\nu''$, he defined the induction and the restriction functors 
\begin{align*}
&\Ind^{\nu}_{\nu',\nu''}:\cD^b_{\bfG_{\nu'}}(\bfE_{\nu'})\boxtimes \cD^b_{\bfG_{\nu''}}(\bfE_{\nu''})\rightarrow \cD^b_{\bfG_\nu}(\bfE_\nu),\\
&\Res^{\nu}_{\nu',\nu''}:\cD^b_{\bfG_\nu}(\bfE_\nu)\rightarrow \cD^b_{\bfG_{\nu'}\times \bfG_{\nu''}}(\bfE_{\nu'}\times \bfE_{\nu''})
\end{align*}
and proved that $\Ind^{\nu}_{\nu',\nu''}(\cQ_{\nu'}\boxtimes \cQ_{\nu''})\subset \cQ_{\nu}, \Res^{\nu}_{\nu',\nu''}(\cQ_\nu)\subset \cQ_{\nu'}\boxtimes \cQ_{\nu''}$ such that all induction functors induce a multiplication and all restriction functors induce a comultiplication on $\cK=\bigoplus_\nu\cK(\cQ_\nu)$ the direct sum of Grothendieck groups which has a $\bbZ[v,v^{-1}]$-module structure given by $v^{-1}.[A]=[A[1](\frac{1}{2})]$. Moreover, he proved the following formula
\begin{equation}\label{Lusztig's formula}
\begin{aligned}
\Res_{\alpha' ,\beta'}^{\gamma}\Ind_{\alpha,\beta}^{\gamma}(A\boxtimes B)\cong \bigoplus_{\lambda=(\alpha_1,\alpha_2,\beta_1,\beta_2) \in \cN}(\Ind_{\alpha_1,\beta_1}^{\alpha'}\boxtimes \Ind_{\alpha_2,\beta_2}^{\beta'}){\tau_\lambda}_!\\
(\Res_{\alpha_1,\alpha_2}^{\alpha}(A)\boxtimes \Res_{\beta_1,\beta_2}^{\beta}(B))[-(\alpha_2,\beta_1)](-\frac{(\alpha_2,\beta_1)}{2})
\end{aligned}
\end{equation}
for any $A\in \cQ_{\alpha}, B\in \cQ_\beta$, see \cite[Proposition 8.4]{Lusztig-1991} or \cite[Theorem 3.1]{fang2023parity} for details, such that $\cK$ is a bialgebra, and proved that $\bbC(v)\otimes_{\bbZ[v,v^{-1}]}\cK\cong {\rm U}_v^-$ as bialgebras.

The connection between the Hall algebra and Lusztig's geometric construction is given by the sheaf-function correspondence, see \cite{lusztig1998canonical,xiao2019ringel,fang2023parity}. The trace map induces a bialgebra homomorphism from $\bbC(v)\otimes_{\bbZ[v,v^{-1}]}\cK$ onto the composition subalgebra $\cC(\rep_{\bbF_q}(Q))$. As a consequence, by applying the trace map to the formula (\ref{Lusztig's formula}), one can obtain the comultiplication formula (\ref{equivalent formula}) for any $u_{[M]},u_{[N]}\in \cC(\rep_{\bbF_q}(Q))$.

In \cite{fang2023parity}, Fang-Lan-Xiao proved the formula (\ref{Lusztig's formula}) for any equivariant mixed semisimple complexes $A\in \cD^{b,ss}_{\bfG_\alpha,m}(\bfE_\alpha),B\in \cD^{b,ss}_{\bfG_\beta,m}(\bfE_\beta)$. By applying the trace map, one can obtain the comultiplication formula (\ref{equivalent formula}) for any $u_{[M]},u_{[N]}\in \cH(\rep_{\bbF_q}(Q))$ beyond the composition subalgebra, equivalently, Green's formula (\ref{introduction Green's formula}) for any $M,N\in\rep_{\bbF_q}(Q)$.

In the present paper, we generalize Fang-Lan-Xiao's result and prove the following theorem, see section \ref{Main theorem and application} for details.
\begin{theorem}
For any $(A,\varphi)\in \tilde{\cD}_\alpha$ and $(B,\psi) \in \tilde{\cD}_{\beta}$, we have 
\begin{align*}
\Res_{\alpha' ,\beta'}^{\gamma}\Ind_{\alpha,\beta}^{\gamma}((A,\varphi)\boxtimes(B,\psi))\cong (C,\phi)\oplus \bigoplus_{\lambda=(\alpha_1,\alpha_2,\beta_1,\beta_2) \in \cN^a}(\Ind_{\alpha_1,\beta_1}^{\alpha'}\boxtimes \Ind_{\alpha_2,\beta_2}^{\beta'})\tilde{{\tau_\lambda}_!}\\
(\Res_{\alpha_1,\alpha_2}^{\alpha}(A,\varphi)\boxtimes \Res_{\beta_1,\beta_2}^{\beta}(B,\psi))[-(\alpha_2,\beta_1)](-\frac{(\alpha_2,\beta_1)}{2}),
\end{align*}
where $(C,\phi)\in \tilde{\cD}_{\alpha',\beta'}$ is traceless.
\end{theorem}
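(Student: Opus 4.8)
The plan is to reduce the statement for a quiver with automorphism to the already-known statement of \cite{fang2023parity} for an ordinary quiver, by passing through the ``unfolding'' construction. Recall that a quiver with automorphism $(Q,a)$ gives rise to the fixed-point data: the category $\tilde{\cD}_\nu$ of pairs $(A,\varphi)$ consists of a mixed semisimple complex $A\in\cD^{b,ss}_{\bfG_\nu,m}(\bfE_\nu)$ on the ambient (non-folded) space together with an isomorphism $\varphi\colon a^*A\xrightarrow{\sim}A$ (suitably normalized), and the induction/restriction functors $\Ind,\Res$ together with their linearizations $\tilde{(\cdot)}$ on such pairs are defined so that forgetting $\varphi$ recovers Lusztig's functors on $\cD^{b,ss}$. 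Thus I would first apply the Fang--Lan--Xiao isomorphism \eqref{Lusztig's formula} (in its mixed-complex form) to the underlying complexes, obtaining
\begin{align*}
\Res_{\alpha',\beta'}^{\gamma}\Ind_{\alpha,\beta}^{\gamma}(A\boxtimes B)\cong C\oplus\bigoplus_{\lambda\in\cN^a}(\Ind_{\alpha_1,\beta_1}^{\alpha'}\boxtimes\Ind_{\alpha_2,\beta_2}^{\beta'})\,{\tau_\lambda}_!(\Res_{\alpha_1,\alpha_2}^{\alpha}A\boxtimes\Res_{\beta_1,\beta_2}^{\beta}B)[-(\alpha_2,\beta_1)]\big(\!-\tfrac{(\alpha_2,\beta_1)}{2}\big),
\end{align*}
where $\cN^a$ is the $a$-stable part of the index set $\cN$ and $C$ is the direct sum of the contributions of the $a$-orbits of length $>1$ together with any genuinely ``off-diagonal'' summands; the point of separating $\cN^a$ from $\cN$ is exactly that only the $a$-stable strata can carry an isomorphism with $a^*$, and the rest will be absorbed into the traceless part.

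The core of the argument is then to upgrade this isomorphism of complexes to an isomorphism of pairs, i.e.\ to produce the Weil structures $\phi$ on $C$ and the compatible isomorphisms on each summand on the right, and to check that under this identification the summand indexed by $\lambda\in\cN^a$ really is $(\Ind_{\alpha_1,\beta_1}^{\alpha'}\boxtimes\Ind_{\alpha_2,\beta_2}^{\beta'})\tilde{{\tau_\lambda}_!}(\Res_{\alpha_1,\alpha_2}^{\alpha}(A,\varphi)\boxtimes\Res_{\beta_1,\beta_2}^{\beta}(B,\psi))$ with its \emph{canonical} Weil structure inherited from $\varphi$ and $\psi$. Concretely I would: (i) recall that each geometric operation entering \eqref{Lusztig's formula} --- the pullbacks, pushforwards along the $\tau_\lambda$, and the convolution diagrams defining $\Ind$ and $\Res$ --- is built from $a$-equivariant maps on the $a$-stable strata, so each induces a canonical functor on pairs with $a^*$-isomorphisms, and the resulting Weil structure on the right-hand summand is canonically determined by $(\varphi,\psi)$; (ii) transport $\varphi$ and $\psi$ through the left-hand side to get a Weil structure $\Phi$ on $\Res_{\alpha',\beta'}^{\gamma}\Ind_{\alpha,\beta}^{\gamma}(A\boxtimes B)$; (iii) observe that the Fang--Lan--Xiao isomorphism is itself constructed from base-change and adjunction morphisms that are natural with respect to the automorphism $a$, hence it intertwines $\Phi$ with the direct sum of the canonical Weil structures on the $\lambda\in\cN^a$ summands plus \emph{some} Weil structure $\phi$ on the leftover complex $C$; (iv) conclude that $(C,\phi)\in\tilde{\cD}_{\alpha',\beta'}$, and that it is traceless because its underlying complex is supported on (or built from) $a$-orbits of cardinality $>1$ in $\cN$ --- by the standard argument, the alternating trace of Frobenius of a complex equipped with the permutation Weil structure coming from a free automorphism action of order $>1$ vanishes, so $(C,\phi)$ maps to $0$ under the trace map and is therefore traceless in $\tilde{\cD}_{\alpha',\beta'}$.

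For the bookkeeping in step (i), the key technical input is the compatibility of the shift-and-twist normalization $v^{-1}.[A]=[A[1](\tfrac12)]$ with the linearized functors $\tilde{(\cdot)}$; I would verify that the Tate twists $(-\tfrac{(\alpha_2,\beta_1)}{2})$ appearing in the formula are compatible with the chosen square roots used to define $\tilde{\tau_{\lambda}}_!$, so that no sign/scalar ambiguity is introduced when we equip the summands with Weil structures (this is where the ``parity'' in the title does its work: the shifts $[-(\alpha_2,\beta_1)]$ are always by the correct parity so that the half-Tate-twists are well defined and the pairs stay in $\tilde{\cD}$).

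The main obstacle I anticipate is not the existence of the isomorphism of complexes --- that is inherited verbatim from \cite{fang2023parity} --- but rather the \emph{canonicity/naturality} of the Fang--Lan--Xiao isomorphism with respect to $a$: one must make sure that the particular chain of base-change and proper-pushforward isomorphisms they use can be chosen functorially in the input, so that applying $a^*$ and then the chain equals the chain and then $a^*$ (on the nose, not just up to an automorphism). If their isomorphism is written as an explicit composite of standard natural transformations this is automatic; if at some point a non-canonical choice is made (e.g.\ a splitting of a direct sum, or a choice of isomorphism between abstractly isomorphic simple summands), then one has to argue that the ambiguity lands inside the traceless part and hence does not affect the statement --- which is exactly why the theorem is phrased with an \emph{unspecified} traceless $(C,\phi)$ rather than an equality on the nose, giving us the slack needed to absorb such choices.
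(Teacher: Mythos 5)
Your high-level plan --- decompose the left-hand side over $\cN$ via the Fang--Lan--Xiao isomorphism, regroup into $\langle a\rangle$-orbits, keep the $\cN^a$ terms, and show the rest is traceless because $a^*$ cyclically permutes the remaining summands --- is the strategy the paper actually follows. But two points in your write-up are not merely gaps to fill: they contain actual errors of logic or of premise.

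First, in step (iv) your implication runs the wrong way. Tracelessness is a \emph{structural} definition (the pair must be isomorphic to $(B\oplus a^*B\oplus\cdots\oplus a^{*(t-1)}B,\ \text{cyclic shift})$); the vanishing of the alternating trace of Frobenius is then a \emph{consequence} (the paper's Lemma~\ref{chitraceless}), not a criterion. Concluding ``maps to 0 under the trace map and is therefore traceless'' is false: plenty of non-traceless pairs have vanishing trace. What has to be proved is that, under some decomposition of the leftover complex, the induced $a^*$-isomorphism literally takes the cyclic-shift form; the paper does exactly this in Lemma~\ref{traceless term} by producing isomorphisms $a^*Y_{t'+1}\cong Y_{t'}$ from $\varphi$, $\psi$ and base change.

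Second, step (iii) rests on the assertion that the Fang--Lan--Xiao isomorphism ``is itself constructed from base-change and adjunction morphisms that are natural with respect to $a$.'' It is not. The key input (\cite[Corollary 3.7]{fang2023parity}, cited as equation~(\ref{base isomorphism}) here) is a purity/decomposition-theorem argument that produces \emph{non-canonical} splittings of a filtration by the strata $\tilde{\bfF}_\lambda$; these splittings are not transformations one can simply commute past $a^*$. You do flag this worry at the end, but your proposed remedy --- that any ambiguity from a bad choice of splitting ``lands inside the traceless part'' --- is not obviously true and you give no argument. The paper sidesteps the issue differently: it regroups the strata into $a$-stable unions $\tilde{\bfF}_s$ over $\langle a\rangle$-orbits \emph{before} invoking the decomposition, then proves a refined weight/purity splitting (Lemma~\ref{weight argument}) so that the $a$-equivariant object one actually works with is the pushforward from $\tilde{\bfF}_s$ (canonically a pair, since $\tilde{\bfF}_s$ is $a$-stable), and the finer decomposition inside an orbit is used only to exhibit the cyclic permutation. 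Without this refined lemma, or an argument of comparable force, the passage from an isomorphism of underlying complexes to an isomorphism of pairs is not justified. You also do not address the explicit identifications (via $\cQ_\lambda$, $\cP_\lambda$, $\cO_\lambda$ and the maps $\xi_\lambda$, $\zeta_\lambda$) needed to match each $\lambda\in\cN^a$ summand on the left with the corresponding term on the right, which is where the numerical bookkeeping $M-2r'_\lambda=N_\lambda-(\alpha_2,\beta_1)$ is checked.
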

This formula is a generation of the formula (\ref{Lusztig's formula}) for equivariant mixed semisimple complexes for the quiver $Q$ with the admissible automorphism $a$. As an application, by applying the trace map, we obtain Green's formula (\ref{introduction Green's formula}) for the Hall algebra $\cH(\rep_k^{F_{Q,a}}(Q))$ of $F_{Q,a}$-stable representations, see section \ref{Hereditary algebra and quiver with automorphism} for definitions. Moreover, for any finite-dimensional hereditary basic algebra $\Lambda$ over $\bbF_q$, there is a finite quiver $Q_\Lambda$ without loops and an admissible automorphism $a$ such that 
$$\mod_{\bbF_q}\Lambda\simeq \rep_k^{F_{Q_\Lambda,a}}(Q_\Lambda),\ \cH(\mod_{\bbF_q}\Lambda)\cong \cH(\rep_k^{F_{Q_\Lambda,a}}(Q_\Lambda)),$$ see section \ref{Hall algebra}. Therefore, we also obtain Green's formula (\ref{introduction Green's formula}) for $\mod_{\bbF_q}\Lambda$.

In section \ref{Hereditary algebra and quiver with automorphism}, we review the definition of the quiver with an automorphism and its relation with the finite-dimensional hereditary basic algebra over the finite field, and refer \cite{deng2006frobenius, Deng-Du-Parshall-Wang-2008} for more details. In section \ref{Hall algebra}, we review the definition of Hall algebra for certain abelian category, and refer \cite{ringel1990hall, green1995hall, Schiffmann-2012} for details. In section \ref{Induction functor and restriction functor}, we review some preliminary about the periodic functor, the equivariant mixed semisimple complex and the trace map, and review Lusztig's geometric framework including the induction and the restriction functors, then give the connection between the geometric construction and the Hall algebra by sheaf-function correspondence. In section \ref{Main theorem and application}, we prove the main theorem and deduce Green's formula as an application. 

\subsection*{Convention} 
Throughout this paper, $p,l$ are two fixed distinct prime number, $\bbF_q$ is the finite field of order $q=p^n$ for some $n\geqslant 1$, $k=\overline{\bbF}_q$ is its algebraic closure and $\overline{\bbQ}_l$ is the algebraic closure of the field of $l$-adic numbers. For any field $K$ and any finite-dimensional algebra $A$ over $K$, we denote by $\mod_KA$ the category of finitely generated right $A$-modules over $K$. For any finite set $X$, we denote by $|X|$ its order.

\section{Quiver with automorphism and hereditary algebra}\label{Hereditary algebra and quiver with automorphism}

\begin{definition}\label{quiver with automorphism}
{\rm{(1)}} A finite quiver $(I,H,s,t)$ consists of two finite sets $I,H$ and two maps $s,t:H\rightarrow I$, where $I$ is the set of vertices, $H$ is the the set of arrows, and for any $h\in H$,  the images $s(h)$ and $t(h)\in I$ are its source and target respectively. A loop of the quiver is an arrow $h\in H$ satisfying $s(h)=t(h)$.

{\rm{(2)}} Let $(I,H,s,t)$ be a finite quiver without loops, an admissible automorphism $a$ of the quiver consists of two permutations $a:I\rightarrow I$ and $a:H\rightarrow H$ satisfying\\ 
{\rm{(a)}} $a(s(h))=s(a(h)), a(t(h))=t(a(h))$ for any $h\in H$;\\
{\rm{(b)}} $s(h),t(h)\in I$ belong to different $a$-orbits for any $h\in H$.
\end{definition}

\begin{definition}
{\rm{(1)}} Let $V$ be a $k$-vector space, a Frobenius map on $V$ is a $\bbF_q$-linear isomorphism $F_V:V\rightarrow V$ satisfying\\
{\rm{(a)}} for any $\lambda\in k$ and $v\in V$, we have $F_V(\lambda v)=\lambda^qF_V(v)$;\\
{\rm{(b)}} for any $v\in V$, there exists $n\geqslant 1$ such that $F_V^n(v)=v$.\\
If there exists such a Frobenius map, then the fixed point set $V^{F_V}$ is a $\bbF_q$-subspace such that $V=k\otimes_{\bbF_q}V^{F_V}$, and we say $V$ has a $\bbF_q$-structure.

{\rm{(2)}} Let $A$ be an algebra over $k$, a Frobenius morphism on $A$ is a Frobenius map $F_A:A\rightarrow A$ on the underlying $k$-vector space preserving the unit and the multiplication. If there exists such a Frobenius morphism, then the fixed point set $A^{F_A}$ is a $\bbF_q$-subalgebra such that $A=k\otimes_{\bbF_q}A^{F_A}$, and we say $A$ has a $\bbF_q$-structure.

{\rm{(3)}} Let $A$ be an algebra over $k$ with the Frobenius morphism $F_A:A\rightarrow A$ and $M\in \mod_k A$, a Frobenius morphism on $M$ is a Frobenius map $F_M:M\rightarrow M$ on the underlying $k$-vector space satisfying $F_M(ma)=F_M(m)F_A(a)$ for any $m\in M$ and $a\in A$. If there exists such a Frobenius morphism, then the fixed point set $M^{F_M}\in \mod_{\bbF_q}(A^{F_A})$ such that $M=k\otimes_{\bbF_q}M^{F_M}$, and we say $M$ is $F_A$-stable.

{\rm{(4)}} Let $A$ be a $k$-algebra with the Frobenius morphism $F_A:A\rightarrow A$, we define $\mod_k^{F_A}A$ to be the category of $F_A$-stable modules. More precisely, its objects are of the form $(M,F_M)$, where $M\in \mod_kA$ is $F_A$-stable and $F_M:M\rightarrow M$ is the Frobenius morphism; and its morphisms $(M, F_M)\rightarrow (M',F_{M'})$ are morphisms $f:M\rightarrow M'$ of $A$-modules satisfying $fF_M=F_{M'}f$. 
\end{definition}

\begin{theorem}[{\cite[Theorem 3.2]{deng2006frobenius}}]\label{equivalences of categories}
There is an equivalence of categories $$\mod_k^{F_A}A\xrightarrow{\simeq} \mod_{\bbF_q}(A^{F_A})$$
defined by $(M,F_M)\mapsto M^{F_M}$. 
\end{theorem}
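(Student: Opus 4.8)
The plan is to construct the functor explicitly, produce a quasi-inverse explicitly, and check that the two composites are naturally isomorphic to the respective identity functors. The key input is that for a finite-dimensional $k$-vector space $V$ equipped with a Frobenius map $F_V$, the fixed-point set $V^{F_V}$ is an $\bbF_q$-form of $V$, i.e. the canonical map $k\otimes_{\bbF_q}V^{F_V}\to V$, $\lambda\otimes v\mapsto \lambda v$, is an isomorphism of $k$-vector spaces; this is the linear-algebra version of Lang's theorem (descent along the extension $\bbF_q\subset k$) and is precisely what is asserted in the definition preceding the statement. I will use this both for $A$ itself (giving the $\bbF_q$-algebra $A^{F_A}$ with $A=k\otimes_{\bbF_q}A^{F_A}$) and for each $F_A$-stable module $M$.

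\textbf{The functor and its action on morphisms.} First I would define $\Phi:\mod_k^{F_A}A\to\mod_{\bbF_q}(A^{F_A})$ on objects by $(M,F_M)\mapsto M^{F_M}$. One checks that $M^{F_M}$ is stable under the $A^{F_A}$-action: if $m\in M^{F_M}$ and $a\in A^{F_A}$ then $F_M(ma)=F_M(m)F_A(a)=ma$, so $M^{F_M}$ is an $\bbF_q$-submodule of $M$ over the $\bbF_q$-subalgebra $A^{F_A}$, and it is finitely generated over $A^{F_A}$ because $\dim_{\bbF_q}M^{F_M}=\dim_k M<\infty$. On a morphism $f:(M,F_M)\to(M',F_{M'})$ with $fF_M=F_{M'}f$, the restriction $f|_{M^{F_M}}$ lands in $M'^{F_{M'}}$ (indeed $F_{M'}(f(m))=f(F_M(m))=f(m)$) and is $A^{F_A}$-linear; this defines $\Phi(f)$, and functoriality is immediate.

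\textbf{The quasi-inverse.} Next I would define $\Psi:\mod_{\bbF_q}(A^{F_A})\to\mod_k^{F_A}A$ by $N\mapsto (k\otimes_{\bbF_q}N,\ \mathrm{Fr}\otimes\mathrm{id}_N)$, where $\mathrm{Fr}:k\to k$ is the arithmetic Frobenius $\lambda\mapsto\lambda^q$; the $A$-module structure on $k\otimes_{\bbF_q}N$ is via $A=k\otimes_{\bbF_q}A^{F_A}$ acting on the two factors. One verifies that $\mathrm{Fr}\otimes\mathrm{id}_N$ is a Frobenius map on $k\otimes_{\bbF_q}N$ (semilinearity over $k$ is clear; every element is fixed by a power of it because $\mathrm{Fr}$ has finite order on each $\lambda\in k$ and $N$ has finite $\bbF_q$-dimension), and that it satisfies the twisted-equivariance condition $F(x\cdot a)=F(x)\cdot F_A(a)$ because $\mathrm{Fr}\otimes\mathrm{id}$ is compatible with $F_A=\mathrm{Fr}\otimes\mathrm{id}$ on $A$. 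On morphisms $\Psi(g)=\mathrm{id}_k\otimes g$.

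\textbf{The natural isomorphisms and the main point.} Finally I would exhibit the unit and counit. For $N\in\mod_{\bbF_q}(A^{F_A})$, the composite $\Phi\Psi(N)=(k\otimes_{\bbF_q}N)^{\mathrm{Fr}\otimes\mathrm{id}}$ is canonically $N$ via $n\mapsto 1\otimes n$; the content here is the elementary fact that $(k\otimes_{\bbF_q}N)^{\mathrm{Fr}\otimes\mathrm{id}}=\bbF_q\otimes_{\bbF_q}N$, which follows from $k^{\mathrm{Fr}}=\bbF_q$ by choosing an $\bbF_q$-basis of $N$. For $(M,F_M)\in\mod_k^{F_A}A$, the composite $\Psi\Phi(M,F_M)=(k\otimes_{\bbF_q}M^{F_M},\mathrm{Fr}\otimes\mathrm{id})$ maps to $(M,F_M)$ by $\lambda\otimes m\mapsto\lambda m$; that this is an $A$-linear isomorphism intertwining $\mathrm{Fr}\otimes\mathrm{id}$ with $F_M$ is exactly the $\bbF_q$-structure fact $M=k\otimes_{\bbF_q}M^{F_M}$ together with the semilinearity of $F_M$ (which forces $F_M(\lambda m)=\lambda^q m=(\mathrm{Fr}\otimes\mathrm{id})(\lambda\otimes m)$ for $m\in M^{F_M}$). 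Both assignments are visibly natural in their arguments. The one step that is not purely formal — and hence the main obstacle — is the descent isomorphism $k\otimes_{\bbF_q}V^{F_V}\xrightarrow{\sim}V$ for a semilinear $F_V$ of finite order; I would either cite it as part of the cited definition or prove it by the standard argument reducing to $\mathrm{GL}_n$ and invoking surjectivity of $x\mapsto x^{-1}F(x)$ on the group $\mathrm{GL}_n(k)$ (Lang–Steinberg), which yields an $F_V$-fixed basis. Given that, all remaining verifications are routine diagram chases, and the theorem follows; in fact since we have produced an explicit quasi-inverse, the equivalence — not merely fully-faithful-plus-essentially-surjective — is established directly.
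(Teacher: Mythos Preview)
The paper does not give its own proof of this statement: it is quoted verbatim as \cite[Theorem 3.2]{deng2006frobenius} and used as a black box. Your proposal is the standard (and correct) argument for this result --- build the fixed-point functor, construct the quasi-inverse by base change $N\mapsto (k\otimes_{\bbF_q}N,\mathrm{Fr}\otimes\mathrm{id})$, and verify the two natural isomorphisms, the only substantive ingredient being the Lang--Steinberg-type descent fact that a Frobenius map on a finite-dimensional $k$-space admits an $F$-fixed basis (equivalently $k\otimes_{\bbF_q}V^{F_V}\xrightarrow{\sim}V$), which the paper already records in its definition of a Frobenius map. So there is nothing to compare against; your argument is exactly what one would write to justify the cited theorem.
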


For any $(M,F_{M}),(M',F_{M'})\in \mod_k^{F_A}A$, we have $(M,F_{M})\cong (M',F_{M'})$ in $\mod_k^{F_A}A$ if and only if $M^{F_{M}}\cong M'^{F_{M'}}$ in $\mod_{\bbF_q}(A^{F_A})$. Moreover, by \cite[Lemma 3.1]{deng2006frobenius}, we have $M^{F_{M}}\cong M'^{F_{M'}}$ in $\mod_{\bbF_q}(A^{F_A})$ if and only if $M\cong M'$ in $\mod_kA$.

\begin{example}[{\cite[Example 3.5]{deng2006frobenius}}]
Let $Q=(I,H,s,t)$ be a finite quiver without loops, $a$ be an admissible automorphism and $kQ$ be the path algebra. For any trivial path $e_i$ at $i\in I$, we define $a(e_i)=e_{a(i)}$; for any non-trivial path $p=h_1...h_t$, where $h_1,...,h_t\in H$, we define $a(p)=a(h_1)...a(h_t)$. Then there is a Frobenius morphism $F_{Q,a}: kQ\rightarrow kQ$ defined by
\begin{align*}
\sum_s \lambda_s p_s\mapsto \sum_s \lambda_s^q a(p_s)\ \textrm{for any $\lambda_s\in k$ and paths $p_s$.}
\end{align*}
\end{example}

\begin{theorem}[{\cite[Theorem 6.5]{deng2006frobenius}}]\label{relation between algebra and quiver with automorphism}
Let $\Lambda$ be a finite-dimensional hereditary basic algebra over $\bbF_q$, then there exists a finite quiver $Q_\Lambda$ without loops and an admissible automorphism $a$ such that $\Lambda\cong (kQ_\Lambda)^{F_{Q_\Lambda,a}}$.
\end{theorem}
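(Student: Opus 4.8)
The plan is to reduce the statement to linear algebra over finite fields through the \emph{species} (modulated graph) description of hereditary algebras, and then to repackage a species over $\bbF_q$ as a quiver with an admissible automorphism.

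First I would record the structure of $\Lambda$. As $\Lambda$ is basic, $\Lambda/\rad\Lambda\cong\prod_{i\in J}D_i$ with each $D_i$ a finite division algebra over $\bbF_q$, hence $D_i\cong\bbF_{q^{d_i}}$ for some $d_i\geq 1$ by Wedderburn's little theorem. Since $\bbF_q$ is perfect, $D:=\prod_iD_i$ is separable, so Wedderburn--Malcev gives an algebra section $D\hookrightarrow\Lambda$, and heredity of $\Lambda$ then yields a $D$-algebra isomorphism $\Lambda\cong T_D(M)$ onto the tensor algebra of the $D$-bimodule $M:=\rad\Lambda/\rad^2\Lambda=\bigoplus_{i,j}{}_iM_j$, where ${}_iM_j=e_iMe_j$ (see e.g.\ \cite{Deng-Du-Parshall-Wang-2008}). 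Finite-dimensionality of $\Lambda$ forces ${}_iM_i=0$ for all $i$ and excludes oriented cycles in the underlying graph of $(D,M)$, since otherwise $T_D(M)$ would contain the nonzero spaces ${}_iM_i^{\ot n}$ (resp.\ nonzero products around a cycle) for every $n$.

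Next I would build $(Q_\Lambda,a)$ from $(D,M)$. Since $\bbF_q$ is perfect, $D_i\ot_{\bbF_q}k\cong k^{d_i}$, the $d_i$ factors being indexed canonically by the embeddings in $\Hom_{\bbF_q}(D_i,k)$ and cyclically permuted by $1\ot\mathrm{fr}_q$, where $\mathrm{fr}_q\colon k\to k$ is $x\mapsto x^q$. I put the vertex set of $Q_\Lambda$ to be $I:=\coprod_{i\in J}\Hom_{\bbF_q}(D_i,k)$ with $a\colon I\to I$ post-composition by $\mathrm{fr}_q$, so the $a$-orbits are precisely the fibres of $I\to J$. For arrows I decompose each $({}_iM_j)\ot_{\bbF_q}k$, a bimodule over $(D_i\ot k)\times(D_j\ot k)\cong k^{d_i}\times k^{d_j}$, into its one-dimensional isotypic pieces to obtain a finite arrow set $H_{ij}$ between the corresponding vertices, set $H:=\coprod_{i,j}H_{ij}$, and let $a\colon H\to H$ be the permutation induced by $1\ot\mathrm{fr}_q$; then $a(s(h))=s(a(h))$ and $a(t(h))=t(a(h))$ by construction, i.e.\ condition (a) of Definition~\ref{quiver with automorphism}. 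Condition (b) holds since an arrow with source and target in one $a$-orbit would come from some ${}_iM_i=0$; the same remark shows $Q_\Lambda$ has no loops, and the conclusion of the previous paragraph shows it has no oriented cycles. It remains to identify $kQ_\Lambda\cong T_{k^{|I|}}(M\ot_{\bbF_q}k)\cong T_D(M)\ot_{\bbF_q}k\cong\Lambda\ot_{\bbF_q}k$ as $k$-algebras, to check that under this identification the Frobenius morphism $F_{Q_\Lambda,a}$ introduced above corresponds to $\mathrm{id}_\Lambda\ot\mathrm{fr}_q$, and to pass to fixed points: a $k$-algebra isomorphism intertwining two Frobenius morphisms restricts to an $\bbF_q$-isomorphism of their fixed subalgebras, whence $\Lambda\cong(\Lambda\ot k)^{\mathrm{id}\ot\mathrm{fr}_q}\cong(kQ_\Lambda)^{F_{Q_\Lambda,a}}$.

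I expect the main obstacle to be the compatibility in the last step: the bases of the one-dimensional arrow spaces (equivalently, the arrow representatives in $\rad\Lambda\ot k$) must be chosen so that $1\ot\mathrm{fr}_q$ permutes them \emph{exactly}, sending the chosen vector of $h$ to that of $a(h)$ rather than to a nonzero multiple of it; only then does $\mathrm{id}_\Lambda\ot\mathrm{fr}_q$ coincide with $F_{Q_\Lambda,a}$ and not merely with some Frobenius inducing $a$. Two such choices differ by an element of the torus $(k^\times)^{H}$ on which the Frobenius acts, and securing $F$-invariance amounts, around an $a$-orbit of arrows of length $r$, to solving $\mu_h^{q}\lambda_h=\mu_{a(h)}$, which reduces to surjectivity of $x\mapsto x^{q^r-1}$ on $k^\times=\overline{\bbF}_q^\times$ -- a baby case of Lang's theorem for tori, and the one place where the argument genuinely uses that the ground field is finite with algebraic closure $k$. (Equivalently, one shows that every Frobenius morphism on $kQ_\Lambda$ inducing $a$ is inner-conjugate to $F_{Q_\Lambda,a}$.) With this normalization in hand, the remaining verifications are routine bookkeeping with tensor algebras, finite fields, and Galois descent.
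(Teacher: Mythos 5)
The paper does not prove this statement; it is imported verbatim as \cite[Theorem 6.5]{deng2006frobenius}, so the only meaningful comparison is against the proof in that reference. Your proposal correctly reconstructs that argument: realize $\Lambda$ as a tensor algebra $T_D(M)$ over a separable semisimple base $D$ via Wedderburn--Malcev and the hereditary tensor-algebra theorem, base-change to $k$ so that the isotypic decompositions of $D\ot_{\bbF_q}k$ and $M\ot_{\bbF_q}k$ furnish the vertex set, the arrow set, and the admissible permutation $a$ (with the loop-freeness and condition (b) of Definition~\ref{quiver with automorphism} both coming from ${}_iM_i=0$), and then normalize the arrow representatives so that $\mathrm{id}_\Lambda\ot\mathrm{fr}_q$ coincides exactly with $F_{Q_\Lambda,a}$, solving $\mu^{q^r-1}=c$ in $k^\times$ on each $a$-orbit of arrows (Lang for $\mathbb{G}_m$). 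This is essentially the same species-theoretic route taken in Deng--Du, and you rightly identify the normalization of arrow vectors as the one genuinely nontrivial step; the only superficial difference is phrasing that step as a one-dimensional Lang argument rather than as the explicit cocycle/base-change bookkeeping in the reference.
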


By Theorem \ref{equivalences of categories} and \ref{relation between algebra and quiver with automorphism}, there are equivalences of categories
$$\mod_k^{F_{Q_\Lambda,a}} (kQ_\Lambda)\simeq \mod_{\bbF_q}((kQ_\Lambda)^{F_{Q_\Lambda,a}})\simeq \mod_{\bbF_q}\Lambda.$$

Let $Q=(I,H,s,t)$ be a finite quiver without loops, $a$ be an admissible automorphism and $\rep_k(Q)$ be the category of finite-dimensional representations of $Q$ over $k$. It is well known that there is an equivalence of categories 
$$\Psi:\mod_k (kQ)\xrightarrow{\simeq} \rep_k(Q),$$
see \cite[Theorem 1.6]{Assem-Simson-Skowronski-2006}. More precisely, for any $M\in \mod_k (kQ)$, the representation $\Psi(M)=(V,x)=(\bigoplus_{i\in I}V_i, (x_h:V_{s(h)}\rightarrow V_{t(h)})_{h\in H})$ is given by\\
$\bullet$ $V_i=Me_i$, where $e_i$ is the trivial path at $i$ for any $i\in I$;\\
$\bullet$ $x_h:V_{s(h)}\rightarrow V_{t(h)}$ maps $m\in Me_{s(h)}$ to $mh=me_{s(h)}he_{t(h)}\in Me_{t(h)}$ for any $h\in H$.

\begin{definition}\label{stable representation}
We define $\rep_k^{F_{Q,a}}(Q)$ to be the category of $F_{Q,a}$-stable representations. More precisely, its objects are of the form $(V,x,F_V)$, where $(V,x)\in \rep_k(Q)$ and $F_V:V\rightarrow V$ is a Frobenius map on $V$ satisfying\\ 
{\rm{(a)}} $F_V(V_i)=V_{a(i)}$ for any $i\in I$;\\
{\rm{(b)}} $F_Vx_h=x_{a(h)}F_V:V_{s(h)}\rightarrow V_{t(a(h))}$ for any $h\in H$,\\
and its morphisms $(V,x,F_V)\rightarrow (V',x',F_{V'})$ are morphisms $(f_i)_{i\in I}:(V,x)\rightarrow (V',x')$ of quiver representations satisfying $f_{a(i)}F_V=F_{V'}f_i$ for any $i\in I$. 
\end{definition}

It is routine to check that $\Psi$ induces to an equivalence of categories 
$$\Psi:\mod_k^{F_{Q,a}} (kQ)\xrightarrow{\simeq} \rep_k^{F_{Q,a}}(Q),$$
see \cite[Section 9]{deng2006frobenius}. As a result, for any $(V,x,F_V), (V',x',F_{V'})\in \rep_k^{F_{Q,a}}(Q)$, we have $(V,x)\cong (V',x')$ in $\rep_k(Q)$ if and only if $(V,x,F_V)\cong (V',x',F_{V'})$ in $\rep_k^{F_{Q,a}}(Q)$.

Summarizing above results, we have the following corollary. 

\begin{corollary}\label{corollary relation}
Let $\Lambda$ be a finite-dimensional hereditary basic algebra over $\bbF_q$, then there exists a finite quiver $Q_\Lambda$ without loops and an admissible automorphism $a$ such that
$$\mod_{\bbF_q}\Lambda\simeq \rep_k^{F_{Q_\Lambda,a}}(Q_\Lambda).$$
\end{corollary}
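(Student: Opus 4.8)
The plan is to obtain the corollary by composing the equivalences already recorded above, so that essentially no new argument is required. First I would apply Theorem~\ref{relation between algebra and quiver with automorphism} to the given finite-dimensional hereditary basic $\bbF_q$-algebra $\Lambda$, producing a finite quiver $Q_\Lambda$ without loops, an admissible automorphism $a$, and an isomorphism of $\bbF_q$-algebras $\Lambda\cong (kQ_\Lambda)^{F_{Q_\Lambda,a}}$; I fix this data for the rest of the proof, using that $F_{Q_\Lambda,a}$ is a Frobenius morphism on $kQ_\Lambda$ as described in the Example above.

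Next I would feed $A=kQ_\Lambda$ and $F_A=F_{Q_\Lambda,a}$ into Theorem~\ref{equivalences of categories}, obtaining an equivalence $\mod_k^{F_{Q_\Lambda,a}}(kQ_\Lambda)\simeq \mod_{\bbF_q}\big((kQ_\Lambda)^{F_{Q_\Lambda,a}}\big)$; composing with the algebra isomorphism of the previous step turns the right-hand side into $\mod_{\bbF_q}\Lambda$. In parallel, the standard equivalence $\Psi:\mod_k(kQ_\Lambda)\xrightarrow{\simeq}\rep_k(Q_\Lambda)$ and its promotion $\Psi:\mod_k^{F_{Q_\Lambda,a}}(kQ_\Lambda)\xrightarrow{\simeq}\rep_k^{F_{Q_\Lambda,a}}(Q_\Lambda)$ identify $\mod_k^{F_{Q_\Lambda,a}}(kQ_\Lambda)$ with $\rep_k^{F_{Q_\Lambda,a}}(Q_\Lambda)$. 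Chaining the two identifications then gives $\mod_{\bbF_q}\Lambda\simeq \rep_k^{F_{Q_\Lambda,a}}(Q_\Lambda)$, which is the assertion.

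The single point that deserves verification — and the step I would treat as the main, if still routine, obstacle — is that $\Psi$ does lift to the categories of $F_{Q_\Lambda,a}$-stable objects. Given $(M,F_M)\in\mod_k^{F_{Q_\Lambda,a}}(kQ_\Lambda)$ with $\Psi(M)=(V,x)$ and $V_i=Me_i$, one checks $F_M(V_i)=F_M(Me_i)=F_M(M)F_{Q_\Lambda,a}(e_i)=Me_{a(i)}=V_{a(i)}$ and that $F_M$ intertwines the structure maps, i.e. $F_Mx_h=x_{a(h)}F_M$, so $(V,x,F_M)$ satisfies conditions (a) and (b) of Definition~\ref{stable representation}; conversely $F_M$ is recovered from the Frobenius datum on $(V,x)$, and morphisms correspond in the same manner. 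This is precisely the compatibility asserted to be routine in \cite[Section~9]{deng2006frobenius}, so in the write-up I would cite it and record that the resulting composite equivalence sends an $F_A$-stable $kQ_\Lambda$-module to the corresponding $F_{Q_\Lambda,a}$-stable representation of $Q_\Lambda$.
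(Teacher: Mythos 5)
Your proposal is correct and follows the same route as the paper: combine Theorem~\ref{relation between algebra and quiver with automorphism} with Theorem~\ref{equivalences of categories}, then use the lifted equivalence $\Psi:\mod_k^{F_{Q_\Lambda,a}}(kQ_\Lambda)\xrightarrow{\simeq}\rep_k^{F_{Q_\Lambda,a}}(Q_\Lambda)$. The paper simply cites \cite[Section~9]{deng2006frobenius} for the latter as routine, whereas you spell out the verification (correctly); otherwise the two arguments coincide.
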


\section{Hall algebra}\label{Hall algebra}

Let $\cA$ be an essentially small, $\bbF_q$-linear, hereditary, finitary, abelian category such that any object has finitely many subobjects, $\textrm{Iso}(\cA)$ be the set of isomorphism classes of objects of $\cA$ and $K(\cA)$ be the Grothendieck group of $\cA$. For any $M\in \cA$, we denote by $[M]\in \textrm{Iso}(\cA)$ its isomorphism class, $\hat{M}\in K(\cA)$ its image.

The Euler form $\langle-,-\rangle$ is a bilinear form on $K(\cA)$ with values in $\bbZ$ induced by $$\langle\hat{M},\hat{N}\rangle=\dim_{\bbF_q}\, \Hom_\cA(M,N)-\dim_{\bbF_q}\, \Ext^1_\cA(M,N)\ \textrm{for any}\ M,N\in \cA.$$ 
The symmetric Euler form $(-,-)$ is given by
$$(\alpha,\beta)=\langle \alpha,\beta\rangle+\langle \beta,\alpha\rangle\ \textrm{for any}\ \alpha,\beta\in K(\cA).$$ 

Let ${v_q}\in \bbC$ be a fixed square root of $q$.

\begin{definition}
{\rm{(1)}} The Hall algebra $\mathcal{H}(\cA)$ for the category $\cA$ is a $\mathbb{C}$-algebra with a basis $\{u_{[M]}|[M]\in {\rm{Iso}}(\cA)\}$ and the multiplication 
$$u_{[M]}*u_{[N]}={v_q}^{\langle\hat{M},\hat{N}\rangle}\sum_{[L]\in {\rm{Iso}}(\cA)}g_{MN}^{L}u_{[L]},$$
where $g_{MN}^{L}$ is the number of submodules $L'\subset L$ satisfying $L/L'\cong M, L'\cong N$.

{\rm{(2)}} The comultiplication on $\mathcal{H}(\cA)$ is defined by
$$\Delta(u_{[L]})=\sum_{[M],[N]\in {\rm{Iso}}(\cA)}{v_q}^{\langle\hat{M},\hat{N}\rangle}g^L_{MN}a_Ma_Na_L^{-1}u_{[M]}\otimes u_{[N]},$$
where $a_{-}=|\Aut_\cA(-)|$ is the order of the automorphism group.
\end{definition}

By Riedtmann-Peng's formula, we have
$$g^L_{MN}=\frac{|\Ext^1_{\cA}(M,N)_L|}{|\Hom_{\cA}(M,N)|}\frac{a_L}{a_Ma_N},$$
where $\Ext^1_{\cA}(M,N)_L\subset \Ext^1_{\cA}(M,N)$ is the subset consisting of extensions whose middle terms are isomorphic to $L$. Hence we have 
$$\Delta(u_{[L]})=\sum_{[M],[N]\in {\rm{Iso}}(\cA)}{v_q}^{\langle\hat{M},\hat{N}\rangle}\frac{|\Ext^1_{\cA}(M,N)_L|}{|\Hom_{\cA}(M,N)|}u_{[M]}\otimes u_{[N]}$$
 
\begin{theorem}[\cite{ringel1990hall,green1995hall}]
{\rm{(1)}} The algebra $\mathcal{H}(\cA)$ is associative with the unit $u_{[0]}$ given by the zero object $0\in \cA$. 

{\rm{(2)}} The coalgebra $\mathcal{H}(\cA)$ is coassociative with the counit $u_{[M]}\mapsto \delta_{M,0}$.
\end{theorem}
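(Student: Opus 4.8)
The statement is the standard Ringel--Green package for the Hall algebra of an abelian category $\cA$ satisfying the stated finiteness hypotheses, so the plan is to verify the four assertions by direct manipulation of structure constants. The whole argument rests on the associativity identity for the numbers $g^L_{MN}$, namely that for fixed $[M],[N],[P],[L]$ one has
\begin{align*}
\sum_{[X]} g^X_{MN}\,g^L_{XP}=\sum_{[Y]}g^L_{MY}\,g^Y_{NP},
\end{align*}
both sides counting the number of pairs of submodules $L''\subset L'\subset L$ with $L/L'\cong M$, $L'/L''\cong N$, $L''\cong P$. I would first record this combinatorial lemma, together with the base cases $g^L_{M0}=g^L_{0M}=\delta_{[L],[M]}$, which immediately give unitality: $u_{[0]}*u_{[M]}=v_q^{\langle 0,\hat M\rangle}\sum_{[L]}g^L_{0M}u_{[L]}=u_{[M]}$, and similarly on the right, since $\langle 0,\hat M\rangle=\langle \hat M,0\rangle=0$.

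\textbf{Associativity.} For associativity one computes both $(u_{[M]}*u_{[N]})*u_{[P]}$ and $u_{[M]}*(u_{[N]}*u_{[P]})$ as sums $\sum_{[L]}c^L\,u_{[L]}$. The power of $v_q$ attached to $[L]$ coming from the left bracketing is $v_q^{\langle\hat M,\hat N\rangle}$ times, for each intermediate $[X]$ with $g^X_{MN}\neq 0$ hence $\hat X=\hat M+\hat N$, the factor $v_q^{\langle\hat X,\hat P\rangle}=v_q^{\langle\hat M+\hat N,\hat P\rangle}$; so the total exponent is $\langle\hat M,\hat N\rangle+\langle\hat M,\hat P\rangle+\langle\hat N,\hat P\rangle$, and the combinatorial coefficient is $\sum_{[X]}g^X_{MN}g^L_{XP}$. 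Bilinearity of $\langle-,-\rangle$ on $K(\cA)$ is what lets me replace $\langle\hat X,\hat P\rangle$ by $\langle\hat M,\hat P\rangle+\langle\hat N,\hat P\rangle$ — this is exactly why the twist was chosen with the Euler form. The right bracketing yields the same $v_q$-exponent (using $\hat Y=\hat N+\hat P$) and the coefficient $\sum_{[Y]}g^L_{MY}g^Y_{NP}$; the two coefficients agree by the lemma, so the two products coincide. The counit claim in (2) is the dual triviality: pairing $\Delta(u_{[L]})$ with the counit $u_{[M]}\mapsto\delta_{[M],[0]}$ kills every term except $[M]=[0]$ (resp.\ $[N]=[0]$), and $g^L_{0N}=\delta_{[L],[N]}$, $a_0=1$, $v_q^{\langle 0,\hat N\rangle}=1$ collapse the sum to $u_{[L]}$, and symmetrically on the other side.

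\textbf{Coassociativity.} For coassociativity I would pass to the Riedtmann--Peng form of the comultiplication, which makes the symmetry manifest. Writing $h^M_{NP}:=v_q^{\langle\hat N,\hat P\rangle}\dfrac{|\Ext^1_\cA(N,P)_M|}{|\Hom_\cA(N,P)|}=v_q^{\langle\hat N,\hat P\rangle}g^M_{NP}\dfrac{a_Na_P}{a_M}$, one has $\Delta(u_{[M]})=\sum_{[N],[P]}h^M_{NP}\,u_{[N]}\otimes u_{[P]}$, and both $(\Delta\otimes\id)\Delta(u_{[L]})$ and $(\id\otimes\Delta)\Delta(u_{[L]})$ equal $\sum_{[N],[P],[R]} d^L_{NPR}\, u_{[N]}\otimes u_{[P]}\otimes u_{[R]}$ with the same coefficient $d^L_{NPR}$. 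Concretely, expanding $(\Delta\otimes\id)\Delta(u_{[L]})=\sum_{[X],[R]}h^L_{XR}\sum_{[N],[P]}h^X_{NP}\,u_{[N]}\otimes u_{[P]}\otimes u_{[R]}$, so $d^L_{NPR}=\sum_{[X]}h^L_{XR}h^X_{NP}$; here on the support $\hat X=\hat N+\hat P$, so $v_q$-exponent $\langle\hat N,\hat P\rangle+\langle\hat N+\hat P,\hat R\rangle=\langle\hat N,\hat P\rangle+\langle\hat N,\hat R\rangle+\langle\hat P,\hat R\rangle$ by bilinearity, and the $g$- and $a$-factors telescope to $\sum_{[X]}g^L_{XR}g^X_{NP}\cdot a_Na_Pa_R/a_L$. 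By the same combinatorial lemma (now with the roles $M\leftrightarrow$ outermost quotient etc.) this equals $\sum_{[Y]}g^L_{NY}g^Y_{PR}\cdot a_Na_Pa_R/a_L$, which is exactly the coefficient one gets from $(\id\otimes\Delta)\Delta(u_{[L]})$. So the two iterated comultiplications agree.

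\textbf{Main obstacle.} None of the steps is deep; the only point requiring care is the bookkeeping of the $v_q$-powers and the $a_\bullet$-factors so that, after using $\hat X=\hat M+\hat N$ (resp.\ $\hat Y=\hat N+\hat P$) on the relevant support and bilinearity of the Euler form, both sides reduce to the \emph{same} symmetric expression $v_q^{\langle\hat M,\hat N\rangle+\langle\hat M,\hat P\rangle+\langle\hat N,\hat P\rangle}$ (resp.\ with $N,P,R$) times the combinatorial triple-flag count. The genuinely combinatorial input — the identity $\sum_{[X]}g^X_{MN}g^L_{XP}=\sum_{[Y]}g^L_{MY}g^Y_{NP}$, proved by interpreting both sides as counting two-step filtrations of $L$ — is classical and is the part I would write out most carefully; everything else is formal substitution. (The finiteness hypotheses on $\cA$ guarantee all the sums are finite and all the numbers $g^L_{MN}$, $a_M$ are well defined.)
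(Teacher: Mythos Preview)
Your argument is correct and is the standard Ringel--Green proof: the combinatorial identity $\sum_{[X]} g^X_{MN}g^L_{XP}=\sum_{[Y]}g^L_{MY}g^Y_{NP}$ via two-step filtrations, combined with bilinearity of the Euler form to align the $v_q$-exponents, handles associativity; coassociativity follows by the same identity after the Riedtmann--Peng rewriting, with the $a_\bullet$-factors telescoping as you say.

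There is nothing to compare against, however: the paper does not prove this theorem. It is stated with attribution to \cite{ringel1990hall,green1995hall} and taken as background, with no argument given. So your write-up is not a variant of the paper's proof but a self-contained verification of a result the paper simply quotes.
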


\begin{theorem}[Green's formula, \cite{green1995hall}]\label{Green's formula}
For any $M,N,M',N'\in \cA$, we have 
\begin{align*}
&a_Ma_Na_{M'}a_{N'}\sum_{[L]\in {\rm{Iso}}(\cA)}g^L_{MN}g^L_{M'N'}a_L^{-1}\\
=&\sum_{[M_1],[M_2],[N_1],[N_2]\in {\rm{Iso}}(\cA)}q^{-\langle\hat{M_1},\hat{N_2} \rangle}g^M_{M_1M_2}g^N_{N_1N_2}g^{M'}_{M_1N_1}g^{N'}_{M_2N_2}a_{M_1}a_{M_2}a_{N_1}a_{N_2}.
\end{align*}
As a result, the comultiplication map $\Delta:\cH(\cA)\rightarrow \cH(\cA)\otimes \cH(\cA)$ is an algebra homomorphism, where the multiplication on $\cH(\cA)\otimes \cH(\cA)$ is defined by
$$(u_{[M_1]}\otimes u_{[M_2]})*(u_{[N_1]}\otimes u_{[N_2]})={v_q}^{(\hat{M_2},\hat{N_1})}(u_{[M_1]}*u_{[N_1]})\otimes (u_{[M_2]}*u_{[N_2]}).$$
\end{theorem}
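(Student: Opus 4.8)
The plan is to proceed exactly as in the known proofs of Green's formula via the Hall-algebra machinery, adapted to our abelian category $\cA$. The statement to prove is the classical homological identity together with the claim that $\Delta$ is an algebra map for the twisted multiplication. Since the excerpt defines $\Delta$ through $g^L_{MN}$ (and the equivalent form via $|\Ext^1_\cA(M,N)_L|$), the cleanest route is to verify that the two sides of Green's formula are precisely the structure constants of $\Delta(u_{[M]}\ast u_{[N]})$ and of $\Delta(u_{[M]})\ast\Delta(u_{[N]})$ in the tensor coordinate $u_{[M_1]}\otimes u_{[M_2]}$. So the proof reduces to the purely combinatorial identity and then to a short bookkeeping step matching powers of $v_q$.

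First I would expand $\Delta(u_{[M]}\ast u_{[N]})$: using the definitions, the coefficient of $u_{[M_1]}\otimes u_{[M_2]}$ is $v_q^{\langle\hat M,\hat N\rangle}\sum_{[L]} g^L_{MN}\, v_q^{\langle\hat M_1,\hat M_2\rangle} g^L_{M_1M_2}\, a_{M_1}a_{M_2}a_L^{-1}$, after absorbing $\hat M_1+\hat M_2=\hat L$. Next I would expand $\Delta(u_{[M]})\ast\Delta(u_{[N]})$ using the twisted product on $\cH(\cA)\otimes\cH(\cA)$: writing $\Delta(u_{[M]})=\sum v_q^{\langle\hat M_1,\hat M_2\rangle} g^M_{M_1M_2} a_{M_1}a_{M_2}a_M^{-1}\,u_{[M_1]}\otimes u_{[M_2]}$ and similarly for $N$, the product picks up a factor $v_q^{(\hat M_2,\hat N_1)}$ and then multiplies $u_{[M_1]}\ast u_{[N_1]}$ and $u_{[M_2]}\ast u_{[N_2]}$, each contributing its own $g$-coefficient and $v_q$-power. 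Collecting the coefficient of $u_{[M_1]}\otimes u_{[M_2]}$ on this side produces the right-hand side of Green's formula (up to the $v_q$-powers, which must be reconciled using $\hat M=\hat M_1+\hat M_2$, $\hat N=\hat N_1+\hat N_2$, bilinearity of $\langle-,-\rangle$ and the relation $(-,-)=\langle-,-\rangle+\langle-,-\rangle^{\mathrm{op}}$, and the fact that $q=v_q^2$). Equating the two coefficients is exactly the asserted identity, so the two assertions in the theorem are logically equivalent and it suffices to prove one of them.

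For the core identity itself, the approach I would take is the double-counting argument of Green (see also Riedtmann–Peng), rephrased homologically. One counts, in two ways, the number of configurations consisting of an object $L$ together with a subobject with quotient $M$ and sub $N$ and simultaneously a subobject with quotient $M'$ and sub $N'$; equivalently, one studies the pullback/pushout square formed by the two short exact sequences and stratifies by the four "corner" objects $M_1$ (the intersection-type piece), $M_2,N_1,N_2$. The left side counts such $L$ with the appropriate automorphism weights; the right side counts the compatible systems of four short exact sequences $0\to M_2\to M\to M_1\to 0$, $0\to N_2\to N\to N_1\to 0$, $0\to N_1\to M'\to M_1\to 0$, $0\to N_2\to N'\to M_2\to 0$, and the exponent $q^{-\langle\hat M_1,\hat N_2\rangle}$ is the correction measuring the failure of $\Hom(M_1,N_2)$-freeness in gluing these data. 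Because $\cA$ is hereditary, $\Ext^{\geq 2}$ vanishes, which is what makes the long exact sequences collapse and the counting exact; this is the one place the hypotheses on $\cA$ are essential.

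The main obstacle is precisely this combinatorial/homological core: carrying out the two-way count so that every automorphism-group factor $a_{(-)}$ lands in the right place and the single power of $q$ emerges with the correct sign and exponent $-\langle\hat M_1,\hat N_2\rangle$. Everything else — expanding $\Delta$, tracking $v_q$-powers, invoking Riedtmann–Peng — is routine. In fact, since this is Green's theorem for an arbitrary finitary hereditary abelian category and the statement is quoted from \cite{green1995hall}, the honest course in the paper is to cite Green's original proof (and the account in \cite{Schiffmann-2012}) rather than reproduce it; the contribution of the present paper is the geometric lift of \eqref{Lusztig's formula}, from which this formula is re-derived in Section \ref{Main theorem and application} via the trace map, so here I would give the short equivalence argument of the first two paragraphs and defer the hard count to the literature.
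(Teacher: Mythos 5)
Your proposal is correct and matches the paper's approach: the paper states this theorem as a citation to \cite{green1995hall} with no proof in Section \ref{Hall algebra}, and its actual contribution is the re-derivation in Section \ref{Main theorem and application} by applying the trace map and Lemma \ref{chitraceless} to the geometric Theorem \ref{main theorem}, which you correctly identify. Your verification that the two assertions in the theorem are equivalent (matching structure constants and tracking the $v_q$-exponent, which collapses to $-2\langle\hat M_1,\hat N_2\rangle$ using bilinearity of $\langle-,-\rangle$ and $\hat M=\hat M_1+\hat M_2$, $\hat N=\hat N_1+\hat N_2$, $\hat M'=\hat M_1+\hat N_1$, $\hat N'=\hat M_2+\hat N_2$) is sound and is the standard bookkeeping; your sketch of Green's pullback--pushout double count, with hereditariness entering through the vanishing of $\Ext^{\geq 2}$, is accurate in outline and is the argument one defers to \cite{green1995hall} (or to \cite{Schiffmann-2012}), exactly as you conclude.
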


Let $\Lambda$ be a finite-dimensional hereditary basic algebra over $\bbF_q$, by Corollary \ref{corollary relation}, there is an equivalence of categories 
$$\mod_{\bbF_q}\Lambda\simeq \rep_k^{F_{Q_\Lambda,a}} (Q_\Lambda).$$
Note that these categories satisfy the assumptions on $\cA$ in the beginning of this section, thus their Hall algebras can be defined. Since the categories are equivalent, we have the bialgebra isomorphism 
$$\cH(\mod_{\bbF_q}\Lambda)\cong \cH(\rep_k^{F_{Q_\Lambda,a}}(Q_\Lambda)).$$

\section{Induction functor and restriction functor}\label{Induction functor and restriction functor}

\subsection{Preliminary}

\subsubsection{Periodic functor}\label{periodic functor}\

In this subsection, we review the definition of the periodic functor and refer \cite[Chapter 11]{lusztig2010introduction} for more details. Let $n$ be a fixed positive integer.

\begin{definition}
{\rm{(1)}} Let $\cC$ be a $\overline{\bbQ}_l$-linear category, a periodic functor on $\cC$ is a linear functor $a^*:\cC\rightarrow \cC$ such that $a^{*n}$ is the identity functor on $\cC$.

{\rm{(2)}} Let $a^*$ be a periodic functor on $\cC$, we define the category $\tilde{\cC}$, also denoted by $\cC^{\ \widetilde{}}$.\\
$\bullet$ Its objects are pairs $(A,\varphi)$, where $A\in \cC$ and $\phi:a^*(A)\rightarrow A$ is an isomorphism in $\cC$ such that the composition
$$A=a^{*n}(A)\xrightarrow{a^{*(n-1)(\varphi)}}a^{*(n-1)}(A)\rightarrow....\rightarrow a^*(A)\xrightarrow{\varphi}A$$
is the identity morphism on $A$.\\
$\bullet$ For any $(A,\varphi),(A',\varphi')\in \tilde{\cC}$, the morphism space
$$\Hom_{\tilde{\cC}}((A,\varphi),(A',\varphi'))=\{f\in \Hom_{\cC}(A,A')|f\varphi=\varphi'(a^*(f))\}.$$ 

{\rm{(3)}} The direct sum of $(A,\varphi),(A',\varphi')\in \tilde{\cC}$ is $(A\oplus A',\varphi\oplus \varphi')$.

{\rm{(4)}} An object $(A,\phi)\in\tilde{\cC}$ is called traceless, if there exists an object $B\in \cC$ and an integer $t\geqslant 2$ dividing $n$ such that $a^{*t}(B)\cong B$, $A\cong B\oplus a^*(B)\oplus...\oplus a^{*(t-1)}(B)$ and $\varphi:a^*(A)\rightarrow A$ corresponds to the isomorphism $a^*(B)\oplus a^{*2}(B)\oplus...\oplus a^{*t}(B)$ taking $a^{*s}(B)$ onto $a^{*s}(B)$ for $1\leqslant s\leqslant t-1$ and the taking $a^{*t}(B)$ onto $B$.
\end{definition} 

\begin{lemma}[{\cite[Section 11.1.3]{lusztig2010introduction}}]\label{split criterion}
Let $(A,\varphi),(A',\varphi'),(A'',\varphi'')$ be objects in $\tilde{\cC}$ and $i':(A',\varphi')\rightarrow (A,\varphi),\ p'':(A,\varphi)\rightarrow (A'',\varphi'')$ be morphisms in $\tilde{\cC}$, if there exists morphisms $i'':A''\rightarrow A,\ p':A\rightarrow A'$ in $\cC$ such that $$p'i'=1_{A'},\ p'i''=0,\ p''i'=0,\ p''i''=1_{A''},\ i'p'+i''p''=1_A,$$
equivalently, $A\cong A'\oplus A''$ in $\cC$, then $(A,\varphi)\cong(A',\varphi')\oplus(A'',\varphi'')$ in $\tilde{\cC}$.
\end{lemma}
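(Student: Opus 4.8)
The plan is to deduce the periodic formula from the non-periodic version of Fang--Lan--Xiao by tracking how the periodic functor $a^*$ acts through their isomorphism, and then collecting the contributions indexed by the non-fixed $a$-orbits into a traceless object. Note first that all dimension vectors occurring here --- $\alpha,\beta,\gamma=\alpha+\beta,\alpha',\beta'$ --- are $a$-invariant, since this is exactly the condition making the categories $\tilde{\cD}_\alpha,\tilde{\cD}_\beta,\tilde{\cD}_{\alpha',\beta'}$ and their periodic functors available; consequently $a$ acts on each variety $\bfE_\nu$ and group $\bfG_\nu$ in play, the correspondence diagrams defining $\Ind^\gamma_{\alpha,\beta}$ and $\Res^\gamma_{\alpha',\beta'}$ are $a$-equivariant, and so is $\tau_\lambda$ for each $\lambda\in\cN^a$. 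First I would record the resulting canonical commutations of $\Ind^\gamma_{\alpha,\beta}$, $\Res^\gamma_{\alpha',\beta'}$ and ${\tau_\lambda}_!$ with $a^*$; these are precisely the data defining the periodic functors $\tilde{\Ind}$, $\tilde{\Res}$, $\tilde{{\tau_\lambda}_!}$, so in particular $\Res^\gamma_{\alpha',\beta'}\Ind^\gamma_{\alpha,\beta}((A,\varphi)\boxtimes(B,\psi))$ is a well-defined object of $\tilde{\cD}_{\alpha',\beta'}$ whose underlying complex is $\Res^\gamma_{\alpha',\beta'}\Ind^\gamma_{\alpha,\beta}(A\boxtimes B)$.

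Next I would invoke \cite[Theorem 3.1]{fang2023parity}, the extension of (\ref{Lusztig's formula}) to equivariant mixed semisimple complexes, to get an isomorphism $\Theta\colon\Res^\gamma_{\alpha',\beta'}\Ind^\gamma_{\alpha,\beta}(A\boxtimes B)\xrightarrow{\ \sim\ }\bigoplus_{\lambda\in\cN}M_\lambda$ in $\cD$, with $M_\lambda$ the $\lambda$-summand on the right of (\ref{Lusztig's formula}). Since $\alpha,\beta,\alpha',\beta'$ are $a$-invariant, $a$ permutes $\cN$ by $\lambda=(\alpha_1,\alpha_2,\beta_1,\beta_2)\mapsto(a\alpha_1,a\alpha_2,a\beta_1,a\beta_2)$ with fixed locus $\cN^a$, and the commutations of the first step endow $\bigoplus_{\lambda\in\cN}M_\lambda$ with a periodic structure under which $a^*$ sends the $\lambda$-summand isomorphically onto the $(a\lambda)$-summand (so $a^*M_\lambda\cong M_{a\lambda}$, not $M_\lambda$, unless $\lambda\in\cN^a$). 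The crux --- and what I expect to be the main obstacle --- is that $\Theta$ can be chosen to intertwine this periodic structure with the one on its source. This requires retracing the construction of $\Theta$ in \cite{fang2023parity}: it is assembled from canonical isomorphisms (K\"unneth, proper and smooth base change along the Cartesian squares in the proof of (\ref{Lusztig's formula}), and the stratification of the relevant $\bfG$-variety into locally closed pieces indexed by $\cN$), and one must check that each ingredient is $a$-equivariant --- $a$ acting compatibly on all the varieties, groups and stratifications and permuting the strata by $\lambda\mapsto a\lambda$ --- so that chaining the equivariance isomorphisms promotes $\Theta$ to an isomorphism in $\tilde{\cD}_{\alpha',\beta'}$. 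Some extra care is likely needed to rule out a correcting automorphism on $\Theta$, for which the mixed semisimple hypothesis and the explicit geometry of the strata should suffice.

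Granting this, I would finish by an orbit decomposition together with Lemma \ref{split criterion}. Write $\cN=\cN^a\sqcup\bigsqcup_O O$ as a union of $a$-orbits. For $\lambda\in\cN^a$ the summand $M_\lambda$ is $a^*$-stable, the transported periodic structure restricts to an isomorphism $\varphi_\lambda\colon a^*M_\lambda\to M_\lambda$, and, since the periodic functors $\tilde{\Ind}$, $\tilde{\Res}$, $\tilde{{\tau_\lambda}_!}$ are built from exactly the commutations used above, unwinding identifies $(M_\lambda,\varphi_\lambda)$ with $(\Ind_{\alpha_1,\beta_1}^{\alpha'}\boxtimes\Ind_{\alpha_2,\beta_2}^{\beta'})\tilde{{\tau_\lambda}_!}(\Res_{\alpha_1,\alpha_2}^{\alpha}(A,\varphi)\boxtimes\Res_{\beta_1,\beta_2}^{\beta}(B,\psi))[-(\alpha_2,\beta_1)](-\tfrac{(\alpha_2,\beta_1)}{2})$ in $\tilde{\cD}_{\alpha',\beta'}$. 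For a non-fixed orbit $O$ of size $t\ge2$ (so $t\mid n$), choosing $\lambda_0\in O$ and setting $B_O:=M_{\lambda_0}$, the equivariant structure cyclically identifies the $t$ summands $\{M_\mu\}_{\mu\in O}$ with $B_O,a^*B_O,\dots,a^{*(t-1)}B_O$ and gives $a^{*t}B_O\cong B_O$, so $(\bigoplus_{\mu\in O}M_\mu,\varphi_O)$ has the cyclic shape in the definition of a traceless object and is traceless; let $(C,\phi)\in\tilde{\cD}_{\alpha',\beta'}$ be the direct sum over all non-fixed orbits, again traceless. In $\cD$ the decomposition $\bigoplus_{\lambda\in\cN}M_\lambda=\big(\bigoplus_{\lambda\in\cN^a}M_\lambda\big)\oplus\big(\bigoplus_{O}\bigoplus_{\mu\in O}M_\mu\big)$ is split by the evident inclusions and projections, which respect the summand-permuting periodic structure and are therefore morphisms in $\tilde{\cD}$; applying Lemma \ref{split criterion} once for each $\lambda\in\cN^a$ and each non-fixed orbit to the periodic object transported through $\Theta$ gives $\Res^\gamma_{\alpha',\beta'}\Ind^\gamma_{\alpha,\beta}((A,\varphi)\boxtimes(B,\psi))\cong(C,\phi)\oplus\bigoplus_{\lambda\in\cN^a}(M_\lambda,\varphi_\lambda)$ in $\tilde{\cD}_{\alpha',\beta'}$, as claimed.
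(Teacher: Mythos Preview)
Your proposal does not address the stated lemma at all. The statement you were asked to prove is the \emph{split criterion} (Lemma~\ref{split criterion}): given morphisms $i'\colon(A',\varphi')\to(A,\varphi)$ and $p''\colon(A,\varphi)\to(A'',\varphi'')$ in $\tilde{\cC}$, together with $\cC$-morphisms $i'',p'$ satisfying the five idempotent relations, one must show $(A,\varphi)\cong(A',\varphi')\oplus(A'',\varphi'')$ in $\tilde{\cC}$. This is a short, purely formal argument in the periodic category: one averages $p'$ over the $\langle a\rangle$-action to produce $\bar p'=\tfrac{1}{n}\sum_{s=0}^{n-1}\varphi'^{(s)}\,a^{*s}(p')\,(\varphi^{(s)})^{-1}$, checks that $\bar p'$ is a morphism in $\tilde{\cC}$ and still satisfies $\bar p'i'=1_{A'}$, $p''i'=0$, and then sets $\bar i''=1_A-i'\bar p'$ composed with $p''$ appropriately (or simply takes $\bar i''=(1_A-i'\bar p')$ viewed as a section of $p''$); the five relations then hold in $\tilde{\cC}$ and give the splitting. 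The paper does not reprove this --- it simply cites \cite[Section 11.1.3]{lusztig2010introduction} --- so an acceptable answer is either a short direct argument of this kind or an explicit reference.

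What you wrote is instead a proof sketch of Theorem~\ref{main theorem}, the periodic analogue of Lusztig's formula. That argument \emph{uses} Lemma~\ref{split criterion} (you even invoke it in your last paragraph), so it cannot serve as a proof of the lemma itself. If your intent was to prove the main theorem, your outline is broadly in the spirit of the paper's own argument --- track $a$-equivariance through the geometry of \cite{fang2023parity}, split the $\cN$-decomposition along $\langle a\rangle$-orbits, and identify the non-fixed orbit contributions as traceless --- but as a proof of the split criterion it is a complete mismatch.
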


Let $\cC'$ be another $\overline{\bbQ}_l$-linear category with a periodic functor $a^*:\cC'\rightarrow \cC'$, and $b:\cC\rightarrow \cC'$ be a linear functor such that $ba^*\cong a^*b:\cC\rightarrow \cC'$, then $b$ induces a linear functor $\tilde{b}:\tilde{\cC}\rightarrow \tilde{\cC'}$ defined by $\tilde{b}(A,\varphi)=(b(A),b(\varphi))$ for any $(A,\varphi)\in \tilde{\cC}$.

\subsubsection{Equivariant mixed semisimple complex and trace map}\label{Equivariant mixed semisimple complex and trace map}\

We refer \cite{Beilinson-Bernstein-Deligne-1982,Kiehl-Rainer-2001,Pramod-2021} for the definitions of the mixed complex on $k$-variety defined over $\bbF_q$ and the trace map, and refer \cite{Bernstein-Lunts-1994,Pramod-2021} for the definition of the equivariant derived category. We fix an isomorphism $\overline{\bbQ}_l\cong \bbC$.

Let $X$ be an algebraic variety over $k$ with a $\bbF_q$-structure and the Frobenius map $F:X\rightarrow F$, and $G$ be a connected algebraic group over $k$ with a $\bbF_q$-structure and the Frobenius map $F:G\rightarrow G$ such that $G$ acts on $X$. We denote by $\cD_G^b(X)$ the $G$-equivariant bounded derived category of constructible $\overline{\bbQ}_l$-sheaves on $X$, $\cD^b_{G,m}(X)$ the subcategory consisting of mixed Weil complexes and $ \cD^{b,ss}_{G,m}(X)$ the subcategory consisting of mixed semisimple complexes.

We define $\overline{\bbQ}_l(\frac{1}{2})$ to be the square root of the $l$-adic Tate sheaf on the point with a $\bbF_q$-structure whose Frobenius map has eigenvalue $(\sqrt{q})^{-1}$. Moreover, for any $n\in \bbZ$, we define $\overline{\bbQ}_l(\frac{n}{2})$ to be $\overline{\bbQ}_l(\frac{1}{2})^{\otimes n}$ if $n\geqslant 0$; or be the dual of $\overline{\bbQ}_l(\frac{1}{2})^{\otimes (-n)}$ if $n<0$.

For any $n\in \bbZ$, we denote by $(\frac{n}{2})$ the functor $-\otimes \overline{\bbQ}_l(\frac{n}{2})$ and denote by $[n]$ the shift functor. For any $G$-equivariant morphism $f:X\rightarrow Y$ which respects the $\bbF_q$-structures, the derived functors of $f^*, f_!$ are still denoted by 
$$f^*:\cD^b_{G,m}(Y)\rightarrow \cD^b_{G,m}(X),\ f_!:\cD^b_{G,m}(X)\rightarrow \cD^b_{G,m}(Y).$$

Let $X^F$ and $G^F$ be the fixed point sets of $X$ and $G$ under their Frobenius maps $F$ respectively, we denote by $\tilde{\cH}_{G^F}(X^F)$ the $\bbC$-vector space of $G^F$-invariant functions on $X^F$. For any $G$-equivariant morphism $f:X\rightarrow Y$ which respects the $\bbF_q$-structures, its restriction to $X^F$ is still denoted by $f:X^F\rightarrow G^F$, and there are $\bbC$-linear maps
\begin{align*}
f^*:\tilde{\cH}_{G^F}(Y^F)&\rightarrow \tilde{\cH}_{G^F}(X^F) &f_!: \tilde{\cH}_{G^F}(X^F)&\rightarrow \tilde{\cH}_{G^F}(Y^F)\\
\varphi&\mapsto (x\mapsto f(\varphi(x))), & \psi&\mapsto(y\mapsto \sum_{x\in f^{-1}(y)}\psi(x)).
\end{align*}

For any mixed Weil complex $A\in \cD^b_{G,m}(X)$ with the Weil structure $\xi:F^*(A)\rightarrow A$ and $x\in X^F, s\in \bbZ$, there is an isomorphism $H^s(\xi)_x:H^s(A)_x\rightarrow H^s(A)_x$ of the stalk at $x$ of the $s$-th cohomology sheaf. Taking the alternative sum of the traces of these isomorphisms, we obtain a value 
$$\chi_A(x)=\sum_{s\in \bbZ}(-1)^s\tr(H^s(\xi)_x)\in\overline{\bbQ}_l\cong \bbC$$
and a function $\chi_A\in \tilde{\cH}_{G^F}(X^F)$. Moreover, $\chi_{-}$ induces a map from the Grothendieck group of $\cD^b_{G,m}(X)$ to $\tilde{\cH}_{G^F}(X^F)$, see \cite[Lemma 5.3.12]{Pramod-2021}, which is called the trace map.

\begin{theorem}[{\cite[Theorem 5.3.13]{Pramod-2021}}]\label{sheaf-function correspondence}
For any $G$-equivariant morphism $f:X\rightarrow Y$ which respects the $\bbF_q$-structures and $A\in \cD^b_{G,m}(X), B\in \cD^b_{G,m}(Y)$, we have
\begin{align*}
&\chi_{A[n]}=(-1)^n\chi_A,\ \chi_{A(\frac{n}{2})}=\sqrt{q}^{-n}\chi_A,\ \chi_{A\boxtimes B}=\chi_A\otimes \chi_B,\\
&\chi_{f^*(B)}=f^*(\chi_B),\ \chi_{f_!(A)}=f_!(\chi_A).
\end{align*}
\end{theorem}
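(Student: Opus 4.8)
The plan is to recognize the five asserted identities as Grothendieck's \emph{faisceaux--fonctions} dictionary and to prove them one at a time at the level of stalks of cohomology sheaves. Since $\chi_A$ is computed from the cohomology sheaves of the underlying complex of $A$ together with the Frobenius endomorphisms induced by the Weil structure $\xi\colon F^*(A)\to A$, the $G$-equivariance is irrelevant to the verification: the forgetful functor $\cD^b_{G,m}(X)\to\cD^b_m(X)$ commutes with $[n]$, $(\tfrac n2)$, $\boxtimes$, $f^*$ and $f_!$, carries Weil structures to Weil structures, and $\chi$ is defined through it; the only equivariant content is that for $G$-equivariant $f$ the resulting functions lie in $\tilde{\cH}_{G^F}$ and the induced transforms on functions are the $f^*,f_!$ of the excerpt, which is immediate. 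So I work with a single ordinary mixed complex $A$ throughout.

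First I would dispose of the four elementary identities. For the shift, $H^s(A[n])=H^{s+n}(A)$ and the induced Weil structure gives $H^s(\xi[n])_x=H^{s+n}(\xi)_x$, so reindexing the alternating sum yields $\chi_{A[n]}=(-1)^n\chi_A$. For the Tate twist, $H^s(A(\tfrac n2))_x=H^s(A)_x\otimes\overline{\bbQ}_l(\tfrac n2)$ and by the very definition of $\overline{\bbQ}_l(\tfrac12)$ the Frobenius on the one-dimensional factor $\overline{\bbQ}_l(\tfrac n2)$ is multiplication by $\sqrt q^{-n}$, whence $\chi_{A(n/2)}=\sqrt q^{-n}\chi_A$. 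For the external product, the stalk functor at $(x,y)\in(X\times Y)^F=X^F\times Y^F$ is exact, so $H^s(A\boxtimes B)_{(x,y)}=\bigoplus_{p+q=s}H^p(A)_x\otimes H^q(B)_y$ with Frobenius $\bigoplus_{p+q=s}H^p(\xi_A)_x\otimes H^q(\xi_B)_y$; taking traces and summing with signs factors the double sum into a product, giving $\chi_{A\boxtimes B}=\chi_A\otimes\chi_B$. For the pullback along $f\colon X\to Y$ with $fF_X=F_Yf$, we have $H^s(f^*B)_x=H^s(B)_{f(x)}$ and, since $F_X^*f^*=f^*F_Y^*$ canonically, $H^s(f^*\xi_B)_x=H^s(\xi_B)_{f(x)}$; evaluating at $x\in X^F$ gives $\chi_{f^*B}(x)=\chi_B(f(x))=(f^*\chi_B)(x)$.

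The substantial identity is $\chi_{f_!A}=f_!\chi_A$, which I would reduce to the case $Y=\operatorname{Spec}\bbF_q$. For $y\in Y^F$ let $i_y\colon X_y\hookrightarrow X$ be the fibre over $y$, a variety over $\bbF_q$ whose Frobenius is induced by $F_X$. By proper base change there is a canonical isomorphism $(f_!A)_y\cong R\Gamma_c(X_y,i_y^*A)$, compatible with Weil structures, hence $H^s(f_!A)_y\cong H^s_c(X_y,i_y^*A)$ as $\overline{\bbQ}_l$-spaces equipped with Frobenius. Therefore $\chi_{f_!A}(y)=\sum_s(-1)^s\tr\bigl(F\mid H^s_c(X_y,i_y^*A)\bigr)$, while $(f_!\chi_A)(y)=\sum_{x\in X_y^F}\chi_A(x)=\sum_{x\in X_y^F}\chi_{i_y^*A}(x)$ by the pullback identity already proved for $i_y$. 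Thus $\chi_{f_!A}=f_!\chi_A$ is precisely the Grothendieck--Lefschetz trace formula
\[
\sum_{x\in Z^F}\chi_K(x)=\sum_s(-1)^s\tr\bigl(F\mid H^s_c(Z,K)\bigr)
\]
applied to $Z=X_y$ and $K=i_y^*A$. This trace formula, together with the Frobenius-compatibility of proper base change, is the only genuinely deep ingredient and is the main obstacle: I would invoke it as a known theorem for $\overline{\bbQ}_l$-complexes on varieties over finite fields (reducing by dévissage to a single lisse sheaf and the classical Grothendieck--Lefschetz formula) rather than reprove it here. Since every identity has now been established for complexes, it descends a fortiori to the Grothendieck groups through which $\chi_-$ factors, completing the proof of all five identities.
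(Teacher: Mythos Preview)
The paper does not give a proof of this statement: it is quoted as \cite[Theorem 5.3.13]{Pramod-2021} and used as a black box, so there is nothing to compare against. Your outline is the standard proof of the faisceaux--fonctions dictionary and is correct; the reduction of $\chi_{f_!A}=f_!\chi_A$ via proper base change to the Grothendieck--Lefschetz trace formula on each fibre is exactly the right argument, and your remark that equivariance plays no role in the verification is accurate.
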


\subsection{Variety for quiver with an automorphism}\label{Variety for quiver with an automorphism}\

Let $Q=(I,H,s,t)$ be a finite quiver without loops, $a$ be an admissible automorphism and $n$ be a fixed positive integer such that $a^n=1$ on $I$ and $H$. 

Let $\bbN I$ be the monoid of $\bbN$-linear combinations of $i\in I$, then the cyclic group $\langle a\rangle $ acts on $\bbN I$ by $a.\sum_{i\in I}\nu_i i=\sum_{i\in I}\nu_{a(i)}i$. We denote by $\bbN I^a\subset \bbN I$ the fixed point set under $a$, that is, the submonoid consisting of $\sum_{i\in I} \nu_{i}i$ satisfying $\nu_{i}=\nu_{a(i)}$ for any $i\in I$. 

For any $\nu\in \bbN I^a$, we fixed a $I$-graded $k$-vector space $\bfV^\nu$ (simply denoted by $\bfV$, if there is no confusion) of dimension vector $\nu$ with\\
$\bullet$ a given Frobenius map $\tilde{F}_{\bfV}:\bfV\rightarrow \bfV$ satisfying $\tilde{F}_{\bfV}(\bfV_i)=\bfV_i$ for any $i\in I$;\\
$\bullet$ a given $k$-linear map $a_{\bfV}:\bfV\rightarrow \bfV$ satisfying $a_{\bfV}\tilde{F}_{\bfV}=\tilde{F}_{\bfV}a_{\bfV}$, $a_{\bfV}(\bfV_i)=\bfV_{a(i)}$ for any $i\in I$, and $a_{\bfV}^s|_{\bfV_i}=1$ for any $i\in I,s\geqslant 1$ whenever $a^s(i)=i$.\\
We denote by $F_{\bfV}=a_{\bfV}\tilde{F}_{\bfV}=\tilde{F}_{\bfV}a_{\bfV}$, then $F_{\bfV}:\bfV\rightarrow \bfV$ is a Frobenius map on $\bfV$ satisfying $F_{\bfV}(\bfV_i)=\bfV_{a(i)}$ for any $i\in I$. 

We define an algebraic group and an affine space
$$\bfG_{\nu}=\prod_{i\in I}\textrm{GL}_k(\bfV_{i}),\ \bfE_{\nu}=\bigoplus_{h\in H}\Hom_k(\bfV_{s(h)},\bfV_{t(h)})$$
such that $\bfG_\nu$ acts on $\bfE_{\nu}$ by $g.x=(g_{t(h)}x_hg_{s(h)}^{-1})_{h\in H}$ for any $g\in \bfG_\nu,x\in \bfE_{\nu}$. 

Then $\bfG_{\nu}$ has a $\bbF_q$-structure with the Frobenius map $F:\bfG_{\nu}\rightarrow \bfG_{\nu}$ satisfying
$$F(g)F_{\bfV}=F_{\bfV}g:\bfV\rightarrow \bfV\ \textrm{for any}\ g\in \bfG_{\nu}$$ 
and $\bfE_{\nu}$ has a $\bbF_q$-structure with the Frobenius map $F:\bfE_{\nu}\rightarrow \bfE_{\nu}$ satisfying
$$(F(x))_hF_{\bfV}=F_{\bfV}x_{a^{-1}(h)}:\bfV_{s(a^{-1}(h))}\rightarrow \bfV_{t(h)}\ \textrm{for any}\ x\in \bfE_{\nu}, h\in H.$$

It is clear that any $x\in \bfE_\nu$ determines a representation $(\bfV,x)\in \rep_k(Q)$, and moreover, $\bfG_\nu.x\mapsto [(\bfV,x)]$ gives a bijection from the set of $\bfG_\nu$-orbits in $\bfE_{\nu}$ to the set of isomorphism classes of objects in $\rep_k(Q)$ of dimension vector $\nu$. The $\bfG_\nu$-action on $\bfE_\nu$ restricts to a $\bfG_\nu^F$-action on $\bfE_\nu^F$, and we have the following result.

\begin{lemma}\label{bijection between orbits and isomorphism classes}
There is a bijection between the set of $\bfG_{\nu}^F$-orbits in $\bfE_{\nu}^F$ and the set of isomorphism classes of objects in $\rep_k^{F_{Q,a}}(Q)$ of dimension vector $\nu$.
\end{lemma}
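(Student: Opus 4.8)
The plan is to exhibit the bijection directly by pulling back the orbit correspondence over the algebraic closure $k$ along the Frobenius maps, using that the $\bfG_\nu^F$-action on $\bfE_\nu^F$ descends from the $\bfG_\nu$-action on $\bfE_\nu$. First I would recall that, as stated just before the lemma, an element $x\in \bfE_\nu$ determines a representation $(\bfV,x)\in \rep_k(Q)$, and $\bfG_\nu.x\mapsto [(\bfV,x)]$ is a bijection from $\bfG_\nu$-orbits in $\bfE_\nu$ onto isomorphism classes in $\rep_k(Q)$ of dimension vector $\nu$. Now given $x\in \bfE_\nu^F$, i.e.\ $F(x)=x$, I claim the pair $(\bfV,x)$ together with $F_{\bfV}:\bfV\to\bfV$ becomes an object of $\rep_k^{F_{Q,a}}(Q)$: conditions (a) and (b) of Definition \ref{stable representation} need to be checked. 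Condition (a), $F_{\bfV}(\bfV_i)=\bfV_{a(i)}$, holds by the construction of $F_{\bfV}=a_{\bfV}\tilde F_{\bfV}$ in Section \ref{Variety for quiver with an automorphism}. Condition (b), $F_{\bfV}x_h=x_{a(h)}F_{\bfV}$, I would derive from the defining relation $(F(x))_hF_{\bfV}=F_{\bfV}x_{a^{-1}(h)}$ of the Frobenius on $\bfE_\nu$ together with $F(x)=x$ (after reindexing $h\mapsto a(h)$). So there is a well-defined map $\Phi$ sending the $\bfG_\nu^F$-orbit of $x$ to the isomorphism class of $(\bfV,x,F_{\bfV})$.

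Next I would check $\Phi$ is well-defined on orbits and injective. Two points $x,x'\in\bfE_\nu^F$ lie in the same $\bfG_\nu^F$-orbit iff there is $g\in \bfG_\nu^F$ with $g.x=x'$; since $g$ commutes with $F_{\bfV}$ by the very definition of the $\bbF_q$-structure on $\bfG_\nu$ ($F(g)F_{\bfV}=F_{\bfV}g$ and $F(g)=g$), the maps $(g_i)_{i\in I}$ give an isomorphism $(\bfV,x,F_{\bfV})\xrightarrow{\sim}(\bfV,x',F_{\bfV})$ in $\rep_k^{F_{Q,a}}(Q)$. Conversely, if $(\bfV,x,F_{\bfV})\cong(\bfV,x',F_{\bfV})$ in $\rep_k^{F_{Q,a}}(Q)$ via $(f_i)$, then $(f_i)$ is in particular an isomorphism $(\bfV,x)\cong(\bfV,x')$ in $\rep_k(Q)$, so $f=(f_i)\in\bfG_\nu$ and $f.x=x'$; the compatibility $f_{a(i)}F_{\bfV}=F_{\bfV}f_i$ says exactly $F(f)=f$ in $\bfG_\nu$, i.e.\ $f\in\bfG_\nu^F$. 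Hence $x,x'$ are in the same $\bfG_\nu^F$-orbit, giving injectivity.

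For surjectivity, let $(W,y,F_W)\in\rep_k^{F_{Q,a}}(Q)$ have dimension vector $\nu$. Forgetting $F_W$, $(W,y)\in\rep_k(Q)$ of dimension vector $\nu$, so by the orbit correspondence over $k$ there is $x\in\bfE_\nu$ and a linear isomorphism $\theta=(\theta_i):W\to\bfV$ of $I$-graded spaces with $\theta y_h\theta^{-1}=x_h$ for all $h$. Transporting $F_W$ along $\theta$ gives a Frobenius map $F':=\theta F_W\theta^{-1}$ on $\bfV$ with $F'(\bfV_i)=\bfV_{a(i)}$ and $F'x_h=x_{a(h)}F'$. It remains to modify $x$ within its $\bfG_\nu$-orbit so that the transported Frobenius becomes exactly $F_{\bfV}$. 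This is where the main work lies: I would invoke a Lang–Steinberg type argument. Both $F'$ and $F_{\bfV}$ are Frobenius maps on $\bfV$ inducing the same permutation $a$ on the grading, so $F'F_{\bfV}^{-1}$ is a $k$-linear automorphism of $\bfV$ preserving the grading, i.e.\ an element $h\in\bfG_\nu$; one checks $F'=hF_{\bfV}$ as twisted-linear maps. By the Lang–Steinberg theorem applied to the connected group $\bfG_\nu$ with the (ordinary, $q$-power) Frobenius induced by $\tilde F_{\bfV}$, together with the nilpotence condition $a_{\bfV}^s|_{\bfV_i}=1$ whenever $a^s(i)=i$ which guarantees the two $\bbF_q$-structures on each $\bfV_i$ agree, one can find $g\in\bfG_\nu$ with $g^{-1}F'(g)=\ $(appropriate correction), so that replacing $x$ by $g.x$ and $\theta$ by $g\theta$ makes the transported Frobenius equal to $F_{\bfV}$; then $g.x\in\bfE_\nu^F$ and $\Phi$ sends its orbit to $[(W,y,F_W)]$. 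The main obstacle is precisely this last descent step: verifying carefully that the two Frobenius structures differ by an element of the connected group $\bfG_\nu$ and applying Lang–Steinberg so the twist can be trivialized; the conditions imposed on $\tilde F_{\bfV}$ and $a_{\bfV}$ in Section \ref{Variety for quiver with an automorphism} are exactly what is needed to make this go through. Everything else is a routine unwinding of Definition \ref{stable representation} against the definitions of the $\bbF_q$-structures on $\bfG_\nu$ and $\bfE_\nu$.
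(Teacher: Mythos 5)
Your argument is correct and produces the same bijection, but the packaging is genuinely different from the paper's. The paper proves well-definedness and injectivity simultaneously via the chain of equivalences: $x,x'$ lie in the same $\bfG_\nu^F$-orbit $\Leftrightarrow$ they lie in the same $\bfG_\nu$-orbit $\Leftrightarrow$ $(\bfV,x)\cong(\bfV,x')$ in $\rep_k(Q)$ $\Leftrightarrow$ $(\bfV,x,F_\bfV)\cong(\bfV,x',F_\bfV)$ in $\rep_k^{F_{Q,a}}(Q)$, where the first step is a Lang--Steinberg statement (using connectedness of $\text{Aut}_k(\bfV,x)$) and the last is Lemma 3.1 of Deng--Du \cite{deng2006frobenius}. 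You instead unwind the definitions directly: an isomorphism $(f_i)$ between $(\bfV,x,F_\bfV)$ and $(\bfV,x',F_\bfV)$ is by definition a tuple $(f_i:\bfV_i\to\bfV_i)$ conjugating $x$ to $x'$ and satisfying $f_{a(i)}F_\bfV=F_\bfV f_i$, which you correctly identify as exactly the condition $F(f)=f$, i.e.\ $f\in\bfG_\nu^F$. This is cleaner and more self-contained: it avoids both the Deng--Du lemma and the transfer between $\bfG_\nu$- and $\bfG_\nu^F$-orbits at this stage. For surjectivity the paper simply asserts that every $(V,x',F_V)$ of dimension vector $\nu$ is isomorphic to some $(\bfV,x,F_\bfV)$ with $x\in\bfE_\nu^F$; you spell out the Lang--Steinberg mechanism behind this by trivializing the $\bfG_\nu$-valued twist $h:=F'F_\bfV^{-1}$. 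The one small slip is the displayed equation ``$g^{-1}F'(g)$'': what one wants is $gF'g^{-1}=F_\bfV$, and with $F'=hF_\bfV$ this unwinds to $h=g^{-1}F(g)$ where $F(g)=F_\bfV gF_\bfV^{-1}$ is the Frobenius endomorphism of $\bfG_\nu$; Lang's theorem for the connected group $\bfG_\nu$ with this Frobenius then produces the required $g$, and you also need not appeal to the normalization $a_{\bfV}^s|_{\bfV_i}=1$ at precisely this point. With that correction your argument is complete and actually supplies the detail that the paper leaves implicit.
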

\begin{proof}
For any $x\in \bfE_{\nu}$, it determines a representation $(\bfV,x)\in \rep_k(Q)$ of dimension vector $\nu$. Recall that the Frobenius map $F_{\bfV}:\bfV\rightarrow \bfV$ satisfies $F_{\bfV}(\bfV_i)=\bfV_{a(i)}$ for any $i\in I$, which is the condition {\rm{(a)}} in Definition \ref{stable representation}. By definition, $x\in \bfE_{\nu}^F$ if and only if $x_hF_{\bfV}=F_{\bfV}x_{a^{-1}(h)}$ for any $h\in H$, which is the condition {\rm{(b)}} in Definition \ref{stable representation}. Thus $x\in \bfE_{\nu}^F$ if and only if $(\bfV,x,F_{\bfV})\in \rep_k^{F_{Q,a}}(Q)$. Moreover, for any $x,x'\in \bfE_{\nu}^F$, we have 
\begin{align*}
&\textrm{$x,x'$ belong to the same $\bfG_{\nu}^F$-orbit}\\
\Leftrightarrow &\textrm{$x,x'$ belong to the same $\bfG_{\nu}$-orbit}
\Leftrightarrow \textrm{$(\bfV,x)\cong (\bfV,x')$ in $\rep_k(Q)$}\\
\Leftrightarrow &\textrm{$(\bfV,x,F_{\bfV})\cong (\bfV,x',F_{\bfV})$ in $\rep_k^{F_{Q,a}}(Q)$}.
\end{align*} 
Hence $\bfG_{\nu}^F.x\mapsto [(\bfV,x,F_{\bfV})]$ defines an injection from the set of $\bfG_{\nu}^F$-orbits in $\bfE_{\nu}^F$ to set of isomorphism classes of objects in $\rep_k^{F_{Q,a}}(Q)$ of dimension vector $\nu$. For any $(V,x',F_V)\in \rep_k^{F_{Q,a}}(Q)$ of dimension vector $\nu$, there exists $(\bfV,x,F_{\bfV})\in \rep_k^{F_{Q,a}}(Q)$ such that $(V,x',F_V)\cong (\bfV,x,F_{\bfV})$, and so the map is also surjective.
\end{proof}

There is an automorphism $a:\bfG_\nu\rightarrow \bfG_\nu$ satisfying
$$a(g)a_{\bfV}=a_{\bfV}g:\bfV\rightarrow \bfV\ \textrm{for any}\ g\in \bfG_\nu,$$
and there is an automorphism $a:\bfE_\nu\rightarrow \bfE_\nu$ satisfying
$$(a(x))_{h}a_{\bfV}=a_{\bfV}x_{a^{-1}(h)}:\bfV_{s(a^{-1}(h))}\rightarrow \bfV_{t(h)} \ \textrm{for any}\ x\in \bfE_\nu,h\in H.$$

Let $\cD^{b,ss}_{\bfG_\nu,m}(\bfE_\nu)$ be the corresponding category in subsection \ref{Equivariant mixed semisimple complex and trace map}, where the Weil structures on its objects are related to $F$. Moreover, we can regard $\tilde{F}_{\bfV}$ as a special case of $F_{\bfV}$ for the quiver $Q$ with the trivial admissible automorphism $a=1$, and denote the corresponding Frobenius maps on $\bfG_\nu,\bfE_\nu$ by $\tilde{F}:\bfG_\nu\rightarrow \bfG_\nu,\tilde{F}:\bfE_\nu\rightarrow \bfE_\nu$ respectively. Let $\cD^{b,ss}_{\bfG_\nu,\tilde{m}}(\bfE_\nu)$ be the corresponding category in \ref{Equivariant mixed semisimple complex and trace map}, where the Weil structures on its objects are related to $\tilde{F}$. 

The automorphisms $a$ on $\bfG_\nu,\bfE_\nu$ satisfy $a^n=1$ such that $$a^*:\cD^{b,ss}_{\bfG_\nu,\tilde{m}}(\bfE_\nu)\rightarrow \cD^{b,ss}_{\bfG_\nu,\tilde{m}}(\bfE_\nu)$$ is a periodic functor on $\cD^{b,ss}_{\bfG_\nu,\tilde{m}}(\bfE_\nu)$, and then the category $(\cD^{b,ss}_{\bfG_\nu,\tilde{m}}(\bfE_\nu))^{\widetilde{}}$ can be defined, see subsection \ref{periodic functor}. The automorphisms $a$ on $\bfG_\nu,\bfE_\nu$ satisfy $F=\tilde{F}a$ such that for any $(A,\varphi)\in(\cD^{b,ss}_{\bfG_\nu,\tilde{m}}(\bfE_\nu))^{\widetilde{}}$, where $A\in \cD^{b,ss}_{\bfG_\nu,\tilde{m}}(\bfE_\nu)$ with the Weil structure $\xi:\tilde{F}^*(A)\rightarrow A$ related to $\tilde{F}$, there is an isomorphism
$$F^*(A)=a^*\tilde{F}^*(A)\xrightarrow{a^*(\xi)}a^*(A)\xrightarrow{\varphi} A$$
which equips $A$ with a Weil structure related to $F$.

\begin{definition}\label{tildecD}
{\rm(1)} We define $\tilde{\cD}_\nu$ to be the subcategory of $(\cD^{b,ss}_{\bfG_\nu,\tilde{m}}(\bfE_\nu))^{\widetilde{}}$ consisting of $(A,\varphi)$, where $A\in \cD^{b,ss}_{\bfG_\nu,\tilde{m}}(\bfE_\nu)$ with the Weil structure $\xi:\tilde{F}^*(A)\rightarrow A$, satisfying $A\in \cD^{b,ss}_{\bfG_\nu,m}(\bfE_\nu)$ with respect to the Weil structure $\varphi a^*(\xi):F^*(A)\rightarrow A$.

{\rm(2)} For any $(A,\varphi)\in \tilde{\cD}_\nu$, where $A\in \cD^{b,ss}_{\bfG_\nu,\tilde{m}}(\bfE_\nu)$ with the Weil structure $\xi:\tilde{F}^*(A)\rightarrow A$, we define $\chi_{(A,\varphi)}$ to be $\chi_A$ by regarding $A\in \cD^{b,ss}_{\bfG_\nu,m}(\bfE_\nu)$ with the Weil structure $\varphi a^*(\xi):F^*(A)\rightarrow A$.
\end{definition}

\begin{lemma}\label{chitraceless}
For any $(A,\varphi)\in \tilde{\cD}_\nu$, if it is traceless, then $\chi_{(A,\varphi)}=0$.
\end{lemma}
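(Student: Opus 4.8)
The plan is to evaluate $\chi_{(A,\varphi)}$ directly on $\bfE_\nu^{F}$ and show it vanishes at every point, by exhibiting the relevant Frobenius action on stalks as a block matrix with zero diagonal. By Definition~\ref{tildecD}(2), $\chi_{(A,\varphi)}$ is the trace function $\chi_{A}$ attached to $A\in\cD^{b,ss}_{\bfG_\nu,m}(\bfE_\nu)$ with the $F$-Weil structure $\eta:=\varphi\circ a^{*}(\xi):F^{*}(A)\to A$, where $\xi:\tilde{F}^{*}(A)\to A$ is the given $\tilde{F}$-Weil structure and $F=\tilde{F}a$, so that $F^{*}=a^{*}\tilde{F}^{*}$ and $a^{*}(\xi):F^{*}(A)\to a^{*}(A)$. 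Hence, for $x\in\bfE_\nu^{F}$, one has $\chi_{(A,\varphi)}(x)=\sum_{s}(-1)^{s}\tr\big(H^{s}(\eta)_{x}\big)$, where $F(x)=x$ is used to regard $H^{s}(\eta)_{x}$ as an endomorphism of $H^{s}(A)_{x}$.

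The main step is to read off the action of $\eta$ on the traceless decomposition. First I would note that $\chi_{(A,\varphi)}$ depends only on the isomorphism class of $(A,\varphi)$ in $\tilde{\cD}_\nu$: a $\tilde{\cC}$-isomorphism intertwining the structure maps is automatically compatible with the $\tilde{F}$-Weil structures (it is a morphism in the mixed category), and a short computation then shows it is an isomorphism of the associated $F$-Weil complexes, so the two trace functions coincide. Thus one may assume $A=\bigoplus_{m=0}^{t-1}a^{*m}(B)$ with $t\geqslant 2$, $a^{*t}(B)\cong B$, with $\xi$ block-diagonal for this decomposition, and $\varphi:a^{*}(A)\to A$ sending the summand $a^{*(m+1)}(B)$ of $a^{*}(A)$ identically onto the summand $a^{*(m+1)}(B)$ of $A$ for $0\leqslant m\leqslant t-2$ and sending $a^{*t}(B)$ onto $B$. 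Since $\xi$ is block-diagonal, $a^{*}(\xi)$ carries $F^{*}(a^{*m}(B))$ into $a^{*(m+1)}(B)$; composing with $\varphi$, the morphism $\eta$ carries the $m$-th summand of $F^{*}(A)$ into the $\big((m+1)\bmod t\big)$-th summand of $A$. Taking a stalk at $x\in\bfE_\nu^{F}$, the identification $H^{s}(F^{*}(A))_{x}\cong H^{s}(A)_{x}$ arising from $F(x)=x$ is block-diagonal, so $H^{s}(\eta)_{x}$ maps the $m$-th summand of $H^{s}(A)_{x}=\bigoplus_{m=0}^{t-1}H^{s}(a^{*m}(B))_{x}$ into the $\big((m+1)\bmod t\big)$-th summand. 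As $t\geqslant 2$, every diagonal block is zero, so $\tr\big(H^{s}(\eta)_{x}\big)=0$ for all $s$; therefore $\chi_{(A,\varphi)}(x)=0$, and since $x$ was arbitrary, $\chi_{(A,\varphi)}=0$.

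The only point needing real care — and the main (minor) obstacle — is the bookkeeping in the middle: checking that after the reduction to standard form, $a^{*}(\xi)$ together with the prescribed $\varphi$ genuinely realizes the cyclic shift of the $t$ summands. This rests on two facts: that morphisms in $\cD^{b,ss}_{\bfG_\nu,\tilde{m}}(\bfE_\nu)$ respect the $\tilde{F}$-Weil structures (so the traceless decomposition may be chosen $\xi$-compatible, making $\xi$ block-diagonal), and the explicit block shape of $\varphi$ in the definition of traceless combined with $F^{*}=a^{*}\tilde{F}^{*}$. Beyond this I expect no obstacle: once the cyclic-shift description is in hand, the vanishing of the stalk traces is just the elementary observation that a block permutation matrix of size $t\geqslant 2$ with no fixed block has trace zero.
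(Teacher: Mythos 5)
Your proposal is correct and follows essentially the same approach as the paper's own proof: identify $(A,\varphi)$ with the standard traceless form $A\cong B\oplus a^{*}(B)\oplus\cdots\oplus a^{*(t-1)}(B)$ so that $\xi_A$ is block-diagonal and $\varphi$ is the cyclic shift, observe that $\eta=\varphi\circ a^{*}(\xi_A)$ therefore permutes the $t\geqslant 2$ blocks cyclically, and conclude that all stalk traces $\tr(H^{s}(\eta)_{x})$ vanish. The paper's proof is simply terser, recording the two matrices $\mathrm{diag}(\xi_B,a^{*}(\xi_B),\dots,a^{*(t-1)}(\xi_B))$ and the off-diagonal shift and asserting the trace vanishes; you spell out the intermediate bookkeeping (that an isomorphism in $\tilde{\cD}_\nu$ preserves $\chi$, and that $a^{*}(\xi)$ followed by $\varphi$ is a genuine cyclic shift on stalks), which is the right amount of care but not a different argument.
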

\begin{proof}
Let $\xi_A:\tilde{F}^*(A)\rightarrow A$ be the Weil structure of $A\in  \cD^{b,ss}_{\bfG_\nu,\tilde{m}}(\bfE_\nu)$. Since $(A,\varphi)$ is traceless, there exists $B\in \cD^{b,ss}_{\bfG_\nu,\tilde{m}}(\bfE_\nu)$ with the Weil structure $\xi_B:\tilde{F}^*(B)\rightarrow B$ and $t\geqslant 2$ dividing $n$ such that $a^{*t}(B)\cong B, A\cong B\oplus a^*(B)\oplus...\oplus a^{*(t-1)}(B)$ and $\xi_A,\varphi$ corresponds to $
\textrm{diag}(\xi_B, a^*(\xi_B),\cdots,a^{*(t-1)}(\xi_B)),
\begin{pmatrix}
0&1\\
\textrm{Id}_{t-1}  &0
\end{pmatrix}$
respectively. Hence $\tr(H^s(\varphi a^*(\xi_A))_x)=0$ for any $x\in \bfE_\nu^F,s\in \bbZ$, and so $\chi_{(A,\varphi)}=0$.
\end{proof}

For any $\nu',\nu''\in\bbN I^a$, the product $\bfE_{\nu'}\times \bfE_{\nu''}$ can be regarded as a special case of $\bfE_\nu$ for the disjoint union of two copies of the quiver $Q$ with the automorphism $a\times a$, then we have the periodic functor $a^*:\cD^{b,ss}_{\bfG_{\nu'}\times \bfG_{\nu''},\tilde{m}}(\bfE_{\nu'}\times \bfE_{\nu''})\rightarrow \cD^{b,ss}_{\bfG_{\nu'}\times \bfG_{\nu''},\tilde{m}}(\bfE_{\nu'}\times \bfE_{\nu''})$ and the categories $(\cD^{b,ss}_{\bfG_{\nu'}\times \bfG_{\nu''},\tilde{m}}(\bfE_{\nu'}\times \bfE_{\nu''}))^{\widetilde{}},\ \tilde{\cD}_{\nu',\nu''}$.

\subsection{Induction functor}\label{Induction functor}\

For any $\nu,\nu',\nu''\in\bbN I^a$ satisfying $\nu=\nu'+\nu''$, we simply denote by $\bfV=\bfV^{\nu}, \bfV'=\bfV^{\nu'}, \bfV''=\bfV^{\nu''}$ the fixed vector space in subsection \ref{Variety for quiver with an automorphism}.

For any $x\in \bfE_\nu$, a $I$-graded $k$-vector subspace $W\subset \bfV$ is called $x$-stable, if $x_h(W_{s(h)})\subset W_{t(h)}$ for any $h\in H$.

For any $x \in \bfE_\nu$ and $x$-stable, $I$-graded $k$-vector subspace $W\subset \bfV$ of dimension vector $\nu''$, we denote by $x_W:W\rightarrow W$ the restriction and $x_{\bfV/W}:\bfV/W\rightarrow \bfV/W$ the quotient of $x$ respectively. Moreover, for any $I$-graded $k$-linear isomorphisms $\rho_1:\bfV/W\rightarrow \bfV',\ \rho_2:W\rightarrow \bfV''$, we define $\rho_1.x_{\bfV/W}\in \bfE_{\nu'},\ \rho_2.x_W\in \bfE_{\nu'}$ by 
$$(\rho_1.x_{\bfV/W})_h=(\rho_1)_{h''}(x_{\bfV/W})_h(\rho_1)_{h'}^{-1}, (\rho_2.x_W)_h=(\rho_2)_{h''}(x_W)_h(\rho_2)_{h'}^{-1}\ \textrm{for any}\ h\in H.$$

Let $\bfE''$ be the variety of $(x,W)$, where $x \in \bfE_\nu$ and $W\subset \bfV$ is a $x$-stable, $I$-graded $k$-vector subspace of dimension vector $\nu''$, and $\bfE'$ be the variety of $(x,W,\rho_1,\rho_2)$, where $(x,W)\in \bfE''$ and $\rho_1:\bfV/W\rightarrow \bfV', \rho_2:W\rightarrow \bfV''$ are $I$-graded $k$-linear isomorphisms, such that $\bfG_{\nu}$ acts on $\bfE''$ by
$$g.(x,W)=(g.x,g(W))\ \textrm{for any}\ g\in \bfG_{\nu}, (x,W)\in \bfE'',$$
and $\bfG_{\nu'}\times \bfG_{\nu''}\times \bfG_{\nu}$ acts on $\bfE'$ by 
$$(g',g'',g).(x,W,\rho_1,\rho_2)=(g.x,g(W),g'\rho_1g^{-1},g''\rho_2g^{-1})$$
for any $(g',g'',g)\in \bfG_{\nu'}\times \bfG_{\nu''}\times \bfG_{\nu}, (x,W,\rho_1,\rho_2)\in \bfE'$.

Then $\bfE''$ has a $\bbF_q$-structure with the Frobenius map $F:\bfE''\rightarrow \bfE''$ defined by
$$F(x,W)=(F(x),F_{\bfV}(W))\ \textrm{for any}\ (x,W)\in \bfE'',$$
and $\bfE'$ has a $\bbF_q$-structure with the Frobenius map $F:\bfE'\rightarrow \bfE'$ defined by
$$F(x,W,\rho_1,\rho_2)=(F(x),F_{\bfV}(W),F(\rho_1),F(\rho_2))\ \textrm{for any}\ (x,W,\rho_1,\rho_2)\in \bfE',$$
where $F(\rho_1):\bfV/F_{\bfV}(W)\rightarrow \bfV',\ F(\rho_2):F_{\bfV}(W)\rightarrow \bfV''$ are defined by
\begin{align*}
&F(\rho_1)(F_{\bfV}(m)+F_{\bfV}(W))=F_{\bfV'}(\rho_1(m+W))\ \textrm{for any}\ m\in \bfV,\\
&F(\rho_2)(F_{\bfV}(m'))=F_{\bfV''}(\rho_2(m'))\ \textrm{for any}\ m'\in W.
\end{align*}

There is an automorphism $a:\bfE''\rightarrow \bfE''$ defined by
$$a(x,W)=(a(x),a_{\bfV}(W))\ \textrm{for any}\ (x,W)\in \bfE'',$$
and there is an automorphism $a:\bfE'\rightarrow \bfE'$ defined by
$$a(x,W,\rho_1,\rho_2)=(a(x),a_{\bfV}(W),a(\rho_1),a(\rho_2))\ \textrm{for any}\ (x,W,\rho_1,\rho_2)\in \bfE',$$
where $a(\rho_1):\bfV/a_{\bfV}(W)\rightarrow \bfV',a(\rho_2):W\rightarrow \bfV''$ are defined by
\begin{align*}
&a(\rho_1)(a_{\bfV}(m)+a_{\bfV}(W))=a_{\bfV'}(\rho_1(m+W))\ \textrm{for any}\ m\in \bfV,\\
&a(\rho_2)(a_{\bfV}(m'))=a_{\bfV''}(\rho_2(m'))\ \textrm{for any}\ m'\in W.
\end{align*}

Let $\cD^{b,ss}_{\bfG_{\nu'}\times \bfG_{\nu''}\times \bfG_{\nu},m}(\bfE'),\ \cD^{b,ss}_{\bfG_{\nu},m}(\bfE'')$ be the corresponding categories in subsection \ref{Equivariant mixed semisimple complex and trace map}, where the Weil structures on their objects are related to $F$. Moreover, we can regard $\tilde{F}_{\bfV}$ as a special case of $F_{\bfV}$ for the quiver $Q$ with the trivial admissible automorphism $a=1$, and denote the corresponding Frobenius maps on $\bfE',\bfE''$ by $\tilde{F}:\bfE'\rightarrow \bfE',\ \tilde{F}:\bfE''\rightarrow \bfE''$ respectively. Let $\cD^{b,ss}_{\bfG_{\nu'}\times \bfG_{\nu''}\times \bfG_{\nu},\tilde{m}}(\bfE'),\ \cD^{b,ss}_{\bfG_{\nu},\tilde{m}}(\bfE'')$ be the corresponding category in \ref{Equivariant mixed semisimple complex and trace map}, where the Weil structures on their objects are related to $\tilde{F}$.

The automorphism $a$ on $\bfE',\bfE''$ satisfy $a^n=1$ such that
\begin{align*}
&a^*:\cD^{b,ss}_{\bfG_{\nu'}\times \bfG_{\nu''}\times \bfG_{\nu},\tilde{m}}(\bfE')\rightarrow \cD^{b,ss}_{\bfG_{\nu'}\times \bfG_{\nu''}\times \bfG_{\nu},\tilde{m}}(\bfE'),\\
&a^*:\cD^{b,ss}_{\bfG_{\nu},\tilde{m}}(\bfE'')\rightarrow \cD^{b,ss}_{\bfG_{\nu},\tilde{m}}(\bfE''),
\end{align*}
are periodic functors, and then the categories $(\cD^{b,ss}_{\bfG_{\nu'}\times \bfG_{\nu''}\times \bfG_{\nu},\tilde{m}}(\bfE'))^{\widetilde{}}, (\cD^{b,ss}_{\bfG_{\nu},\tilde{m}}(\bfE''))^{\widetilde{}}$ can be defined. We define the subcategories $\tilde{\cD}',\tilde{\cD}''$ in  a similar way to Definition \ref{tildecD}.

Consider the following morphisms
\begin{diagram}[midshaft,size=1.5em]
\bfE_{\nu'}\times \bfE_{\nu''} &\lTo^{p_1} &\bfE' &\rTo^{p_2} &\bfE'' &\rTo^{p_3} &\bfE_{\nu}\\
(\rho_1.x_{\bfV/W}, \rho_2.x_W) &\lMapsto &(x,W,\rho_1,\rho_2) &\rMapsto &(x,W) &\rMapsto &x,
\end{diagram}
which respect the $\bbF_q$-structures defined by $\tilde{F}$ and $F$. \\
$\bullet$ The morphism $p_1$ is smooth with connected fibers and $\bfG_{\nu'}\times \bfG_{\nu''}\times \bfG_{\nu}$-equivariant with respect to the trivial $\bfG_\nu$-action on $\bfE_{\nu'}\times \bfE_{\nu''}$. By \cite[Proposition 3.6.1]{Pramod-2021}, we have $p_1^*:\cD^{b,ss}_{\bfG_{\nu'}\times \bfG_{\nu''},\tilde{m}}(\bfE_{\nu'}\times \bfE_{\nu''})\rightarrow \cD^{b,ss}_{\bfG_{\nu'}\times \bfG_{\nu''}\times \bfG_{\nu},\tilde{m}}(\bfE')$. By $p_1a=ap_1, a^*p_1^*=p_1^*a^*$, we have $\tilde{p_1^*}:(\cD^{b,ss}_{\bfG_{\nu'}\times \bfG_{\nu''},\tilde{m}}(\bfE_{\nu'}\times \bfE_{\nu''}))^{\widetilde{}}\rightarrow (\cD^{b,ss}_{\bfG_{\nu'}\times \bfG_{\nu''}\times \bfG_{\nu},\tilde{m}}(\bfE'))^{\widetilde{}}$ and $\tilde{p_1^*}:\tilde{\cD}_{\nu',\nu''}\rightarrow \tilde{\cD}'$.\\
$\bullet$ The morphism $p_2$ is a $\bfG_{\nu'}\times \bfG_{\nu''}$-principal bundle and $\bfG_{\nu}$-equivariant. By \cite[Section 8.1.8]{lusztig2010introduction}, $p_2^*:\cD^{b,ss}_{\bfG_{\nu'}\times \bfG_{\nu''}\times \bfG_{\nu},\tilde{m}}(\bfE')\rightarrow \cD^{b,ss}_{\bfG_{\nu},\tilde{m}}(\bfE'')$ is an equivalence of categories with the quasi-inverse denoted by ${p_2}_\flat$. By $p_2a=ap_2, a^*p_2^*=p_2^*a^*,a^*{p_2}_\flat\cong {p_2}_\flat a^*$, we have $\tilde{{p_2}_\flat}:(\cD^{b,ss}_{\bfG_{\nu'}\times \bfG_{\nu''}\times \bfG_{\nu},\tilde{m}}(\bfE'))^{\widetilde{}}\rightarrow(\cD^{b,ss}_{\bfG_{\nu},\tilde{m}}(\bfE''))^{\widetilde{}}$ and $\tilde{{p_2}_\flat}:\tilde{\cD}'\rightarrow \tilde{\cD}''$.\\
$\bullet$ The morphism $p_3$ is proper and $\bfG_{\nu}$-equivariant. By \cite[Theorem 3.9.2]{Pramod-2021}, we have ${p_3}_!:\cD^{b,ss}_{\bfG_{\nu},\tilde{m}}(\bfE'')\rightarrow \cD^{b,ss}_{\bfG_\nu,\tilde{m}}(\bfE_\nu)$. There is a cartesian diagram
\begin{diagram}[midshaft,size=2em]
\bfE'' &\rTo^{p_3} &\bfE_\nu\\
\dTo^{a} &\square &\dTo_{a}\\
\bfE'' &\rTo^{p_3} &\bfE_\nu
\end{diagram}
such that $a^*{p_3}_!\cong {p_3}_!a^*$ by base change. We have $\tilde{{p_3}_!}\!:\!(\cD^{b,ss}_{\bfG_{\nu},\tilde{m}}(\bfE''))^{\widetilde{}}\rightarrow (\cD^{b,ss}_{\bfG_\nu,\tilde{m}}(\bfE_\nu))^{\widetilde{}}$ and $\tilde{{p_3}_!}:\tilde{\cD}''\rightarrow \tilde{\cD}_\nu$.

\begin{definition}
The induction functor $\Ind^{\nu}_{\nu',\nu''}:\tilde{\cD}_{\nu'}\boxtimes \tilde{\cD}_{\nu''}\rightarrow \tilde{\cD}_\nu$ is defined by
$$\Ind^{\nu}_{\nu',\nu''}((A,\varphi)\boxtimes (B,\psi))=\tilde{{p_3}_!}\tilde{{p_2}_\flat}\tilde{p_1^*}((A,\varphi)\boxtimes (B,\psi))[d_1-d_2](\frac{d_1-d_2}{2})$$
for any $(A,\varphi)\in \tilde{\cD}_{\nu'}, (B,\psi)\in \tilde{\cD}_{\nu''}$, where $d_1$ and $d_2$ are the dimensions of the fibers of $p_1$ and $p_2$ respectively, and 
$$d_1-d_2=\sum_{i\in I}\nu'_i\nu''_i+\sum_{h\in H}\nu'_{s(h)}\nu''_{t(h)}.$$
\end{definition}

\subsection{Restriction functor}\label{Restriction functor}\

For any $\nu,\nu',\nu''\in\bbN I^a$ satisfying $\nu=\nu'+\nu''$, we simply denote by $\bfV=\bfV^{\nu}, \bfV'=\bfV^{\nu'}, \bfV''=\bfV^{\nu''}$ the fixed vector space in subsection \ref{Variety for quiver with an automorphism}. In this subsection, we fix an $a_{\bfV}$-stable, $\tilde{F}_{\bfV}$-stable, $I$-graded $k$-vector subspace $\bfW\subset \bfV$ of dimension vector $\nu''$ and two $I$-graded $k$-linear isomorphisms $\rho_1:\bfV/\bfW\rightarrow \bfV'$ and $\rho_2:\bfW\rightarrow \bfV''$. 

Let $\bfQ\subset \bfG_\nu$ be the stabilizer of $\bfW\subset \bfV$ and $\bfU\subset \bfQ$ be its unipotent radical, then $\bfQ$ acts on $\bfE_{\nu'}\times \bfE_{\nu''}$ through the quotient $\bfQ/\bfU\cong \bfG_{\nu'}\times \bfG_{\nu''}$. In particular, $\bfU$ acts trivially. Both $\bfQ$ and $\bfU$ are stable under the Frobenius map $F:\bfG_{\nu}\rightarrow \bfG_{\nu}$ and the automorphism $a:\bfG_{\nu}\rightarrow \bfG_{\nu}$, we still denote their restrictions to $\bfQ$ and $\bfU$ by the same notations.

Let $\bfF$ be the closed subvariety of $\bfE_\nu$ consisting of $x\in \bfE_\nu$ such that $\bfW$ is $x$-stable, then $\bfQ$ acts on $\bfF$, and $\bfF$ is stable under the Frobenius map $F:\bfE_{\nu}\rightarrow \bfE_{\nu}$ and the automorphism $a:\bfE_{\nu}\rightarrow \bfE_{\nu}$. We still denote by $F:\bfF\rightarrow \bfF,\ a:\bfF\rightarrow \bfF$ the restrictions of $F, a$ respectively.

Let $\cD^{b,ss}_{\bfQ,m}(\bfE_\nu),\ \cD^{b,ss}_{\bfQ,m}(\bfF),\ \cD^{b,ss}_{\bfQ,m}(\bfE_{\nu'}\times \bfE_{\nu''})$ be the corresponding categories in subsection \ref{Equivariant mixed semisimple complex and trace map}, where the Weil structures on their objects are related to $F$. Moreover, we can regard $\tilde{F}_{\bfV}$ as a special case of $F_{\bfV}$ for the quiver $Q$ with the trivial admissible automorphism $a=1$, and denote the corresponding Frobenius maps on $\bfQ,\bfF$ by $\tilde{F}:\bfQ\rightarrow \bfQ,\ \tilde{F}:\bfF\rightarrow \bfF$ respectively. Let $\cD^{b,ss}_{\bfQ,\tilde{m}}(\bfE_\nu),\ \cD^{b,ss}_{\bfQ,\tilde{m}}(\bfF),\ \cD^{b,ss}_{\bfQ,\tilde{m}}(\bfE_{\nu'}\times \bfE_{\nu''})$ be the corresponding category in \ref{Equivariant mixed semisimple complex and trace map}, where the Weil structures on their objects are related to $\tilde{F}$.
    
Consider the following morphisms 
\begin{diagram}[midshaft,size=1.5em]
\bfE_{\nu'}\times \bfE_{\nu''} &\lTo^{\kappa} &\bfF &\rTo^{\iota} &\bfE_{\nu}\\
(\rho_1.x_{\bfV/\bfW}, \rho_2.x_{\bfW}) &\lMapsto &x &\rMapsto &x 
\end{diagram}
which respect the $\bbF_q$-structures defined by $\tilde{F}$ and $F$.\\
$\bullet$ The morphism $\iota$ is the closed embedding and $\bfQ$-equivariant. By $\iota a=a\iota, a^*\iota^*=\iota^*a^*$, we have $\tilde{\iota^*}:(\cD^b_{\bfQ,\tilde{m}}(\bfE_{\nu}))^{\widetilde{}}\rightarrow (\cD^b_{\bfQ,\tilde{m}}(\bfF))^{\widetilde{}}$.\\
$\bullet$ The morphism $\kappa$ is a vector bundle of rank $\sum_{h\in H}\nu'_{s(h)}\nu''_{t(h)}$ and $\bfQ$-equivariant. There is a cartesian diagram
\begin{diagram}[midshaft,size=2em]
\bfF &\rTo^{\kappa} &\bfE_{\nu'}\times \bfE_{\nu''}\\
\dTo^a &\square &\dTo_a\\
\bfF &\rTo^{\kappa} &\bfE_{\nu'}\times \bfE_{\nu''},
\end{diagram}
such that $a^*\kappa_!\cong\kappa_!a^*$. We have $\tilde{\kappa_!}:(\cD^b_{\bfQ,\tilde{m}}(\bfF))^{\widetilde{}}\rightarrow (\cD^b_{\bfQ,\tilde{m}}(\bfE_{\nu'}\times \bfE_{\nu''}))^{\widetilde{}}$.

For any $(A,\varphi)\in \tilde{\cD}_{\nu}$, the object $A\in \cD^{b,ss}_{\bfG_\nu,\tilde{m}}(\bfE_{\nu})$ can be viewed as an object in $\cD^{b,ss}_{\bfQ,\tilde{m}}(\bfE_{\nu})$ via the forgetful functor $\cD^{b,ss}_{\bfG_\nu,\tilde{m}}(\bfE_{\nu})\rightarrow \cD^{b,ss}_{\bfQ,\tilde{m}}(\bfE_{\nu})$. By \cite[Proposition 2.10]{fang2023parity}, $\kappa_!\iota^*:\cD^b_{\bfQ,\tilde{m}}(\bfE_{\nu})\rightarrow \cD^b_{\bfQ,\tilde{m}}(\bfE_{\nu'}\times \bfE_{\nu''})$ is hyperbolic localization functor in the sense of \cite{braden-2003} and $\kappa_!\iota^*(A)\in\cD^{b,ss}_{\bfQ,\tilde{m}}(\bfE_{\nu'}\times \bfE_{\nu''})$. By \cite[Theorem 6.6.16]{Pramod-2021}, there is an equivalence of categories $\cD^{b,ss}_{\bfQ,\tilde{m}}(\bfE_{\nu'}\times \bfE_{\nu''})\simeq \cD^{b,ss}_{\bfG_{\nu'}\times \bfG_{\nu''},\tilde{m}}(\bfE_{\nu'}\times \bfE_{\nu''})$ such that $\kappa_!\iota^*(A)$ can be viewed as an object in $\cD^{b,ss}_{\bfG_{\nu'}\times \bfG_{\nu''},\tilde{m}}(\bfE_{\nu'}\times \bfE_{\nu''})$. By Lemma \ref{split criterion}, we have $\tilde{\kappa_!}\tilde{\iota^*}(A,\varphi)\in (\cD^{b,ss}_{\bfG_{\nu'}\times \bfG_{\nu''},\tilde{m}}(\bfE_{\nu'}\times \bfE_{\nu''}))^{\widetilde{}}$ and moreover, $\tilde{\kappa_!}\tilde{\iota^*}(A,\varphi)\in\tilde{\cD}_{\nu',\nu''}$.

\begin{definition}
The restriction functor $\Res^\nu_{\nu',\nu''}:\tilde{\cD}_{\nu}\rightarrow \tilde{\cD}_{\nu',\nu''}$ is defined by
$$\Res^\nu_{\nu',\nu''}(A,\varphi)=\tilde{\kappa_!}\tilde{\iota^*}(A,\varphi)[-\langle\nu',\nu''\rangle](-\frac{\langle\nu',\nu''\rangle}{2}),$$
for any $(A,\varphi)\in \tilde{\cD}_{\nu}$, where $\langle-,-\rangle:\bbZ I\times \bbZ I\rightarrow \bbZ$ is a bilinear form defined by
$$\langle \nu',\nu''\rangle=\sum_{i\in I}\nu'_i\nu''_i-\sum_{h\in H}\nu'_{s(h)}\nu''_{t(h)}.$$
\end{definition}
We remark that $\langle-,-\rangle$ coincides with the Euler form for $\rep_k^{F_Q,a}(Q)$, see section \ref{Hall algebra}. More precisely, taking the dimension vectors induce an isomorphism from the Grothendieck group $K(\rep_k^{F_Q,a}(Q))$ to $\bbZ I$ such that 
$$\langle\hat{M},\hat{M'}\rangle=\langle \nu',\nu''\rangle$$ 
for any $M,M'\in \rep_k^{F_Q,a}(Q)$ of dimension vector $\nu,\nu'$ respectively.

\subsection{Hall algebra via function}\label{Hall algebra via function}\

In this subsection, we use the same notations as them in subsections \ref{Variety for quiver with an automorphism}, \ref{Induction functor}, \ref{Restriction functor}. Let ${v_q}\in \bbC$ be the fixed square root of $q$ which is the same as it in section \ref{Hall algebra}.

For any $\nu\in \bbN I^a$, $\tilde{\cH}_{\bfG_\nu^F}(\bfE_\nu^F)$ is the $\bbC$-vector space of $\bfG_\nu^F$-invariant functions on $\bfE_\nu^F$ which is finite-dimensional with a basis consisting of characteristic functions of $\bfG_\nu^F$-orbits in $\bfE_\nu^F$. For any $\nu',\nu''\in \bbN I^a$, there is an isomorphism of $\bbC$-vector space
\begin{align*}
\tilde{\cH}_{\bfG_{\nu'}^F}(\bfE_{\nu'}^F)\otimes \tilde{\cH}_{\bfG_{\nu''}^F}(\bfE_{\nu''}^F)&\rightarrow \tilde{\cH}_{\bfG_{\nu'}^F\times \bfG_{\nu''}^F}(\bfE_{\nu'}^F\times \bfE_{\nu''}^F)\\
f\otimes g&\mapsto ((x',x'')\mapsto f(x')g(x'')).
\end{align*}

For any $\nu,\nu',\nu''\in \bbN I^a$ satisfying $\nu=\nu'+\nu''$, the varieties and morphisms 
 \begin{diagram}[midshaft,size=1.5em]
\bfE_{\nu'}\times \bfE_{\nu''} &\lTo^{p_1} &\bfE' &\rTo^{p_2} &\bfE'' &\rTo^{p_3} &\bfE_{\nu}, & &\bfE_{\nu'}\times \bfE_{\nu''} &\lTo^{\kappa} &\bfF &\rTo^{\iota} &\bfE_{\nu}
\end{diagram}
are defined over $\bbF_q$, see subsections \ref{Induction functor} and \ref{Restriction functor}. Taking the fixed point sets under their Frobenius maps $F$, we obtain 
 \begin{diagram}[midshaft,size=1.5em]
\bfE_{\nu'}^F\times \bfE_{\nu''}^F &\lTo^{p_1} &\bfE'^F &\rTo^{p_2} &\bfE''^F &\rTo^{p_3} &\bfE_{\nu}^F, & &\bfE_{\nu'}^F\times \bfE_{\nu''}^F &\lTo^{\kappa} &\bfF^F &\rTo^{\iota} &\bfE_{\nu}^F,
\end{diagram}
where $p_1,p_2,p_3,\kappa,\iota$ are the restrictions denoted by the same notations.

For any $f\in \tilde{\cH}_{\bfG_{\nu'}^F}(\bfE_{\nu'}^F),\ g\in \tilde{\cH}_{\bfG_{\nu''}^F}(\bfE_{\nu''}^F)$, we have 
\begin{align*}
f\otimes g\in \tilde{\cH}_{\bfG_{\nu'}^F}(\bfE_{\nu'}^F)\otimes \tilde{\cH}_{\bfG_{\nu''}^F}(\bfE_{\nu''}^F)\cong \tilde{\cH}_{\bfG_{\nu'}^F\times \bfG_{\nu''}^F}(\bfE_{\nu'}^F\!\times\! \bfE_{\nu''}^F)
\end{align*}
and $p_1^*(f\otimes g)\in \tilde{\cH}_{\bfG_{\nu'}^F\times \bfG_{\nu''}^F\times \bfG_{\nu}^F}(\bfE')$ which is $\bfG_{\nu'}^F\times \bfG_{\nu''}^F$-invariant. Since $p_2:\bfE'^F\rightarrow \bfE''^F$ is a $\bfG_{\nu'}^F\times \bfG_{\nu''}^F$-principal bundle, there exists a unique $h\in \tilde{\cH}_{\bfG_{\nu}^F}(\bfE')$ such that $p_2^*(h)=p_1^*(f\otimes g)$. In fact, $h=|\bfG_{\nu'}^F\times \bfG_{\nu''}^F|^{-1}{p_2}_!p_1^*(f\otimes g)$. We can form ${p_3}_!(h)\in \tilde{\cH}_{\bfG_\nu^F}(\bfE_\nu^F)$. Similarly, for any $f\in \tilde{\cH}_{\bfG_\nu^F}(\bfE_\nu^F)$, it can be viewed as an element in $\tilde{\cH}_{\bfQ^F}(\bfE_\nu^F)$, then we can form $\kappa_!\iota^*(f)\in \tilde{\cH}_{\bfQ^F}(\bfE_{\nu'}^F\times \bfE_{\nu''}^F)$. Since the unipotent radical $\bfU^F$ acts trivially on $\bfE_{\nu'}^F\times \bfE_{\nu''}^F$, there are isomorphisms 
$$\tilde{\cH}_{\bfQ^F}(\bfE_{\nu'}^F\times \bfE_{\nu''}^F)\cong \tilde{\cH}_{\bfG_{\nu'}^F\times \bfG_{\nu''}^F}(\bfE_{\nu'}^F\times \bfE_{\nu''}^F)\cong \tilde{\cH}_{\bfG_{\nu'}^F}(\bfE_{\nu'}^F)\otimes \tilde{\cH}_{\bfG_{\nu''}^F}(\bfE_{\nu''}^F)$$
such that $\kappa_!\iota^*(f)$ can be viewed as an element in $\tilde{\cH}_{\bfG_{\nu'}^F}(\bfE_{\nu'}^F)\otimes \tilde{\cH}_{\bfG_{\nu''}^F}(\bfE_{\nu''}^F)$.

\begin{definition}
{\rm{(1)}} The induction $\ind^\nu_{\nu',\nu''}:\tilde{\cH}_{\bfG_{\nu'}^F}(\bfE_{\nu'}^F)\otimes \tilde{\cH}_{\bfG_{\nu''}^F}(\bfE_{\nu''}^F)\rightarrow \tilde{\cH}_{\bfG_\nu^F}(\bfE_\nu^F)$
is defined by 
$$\ind^\nu_{\nu',\nu''}(f\otimes g)=\frac{{v_q}^{-\sum_{i\in I}\nu'_i\nu''_i-\sum_{h\in H}\nu'_{s(h)}\nu''_{t(h)}}}{|\bfG_{\nu'}^F\times \bfG_{\nu''}^F|}{p_3}_!{p_2}_!p_1^*(f\otimes g).$$

{\rm{(2)}} The restriction $\res^\nu_{\nu',\nu''}:\tilde{\cH}_{\bfG_\nu^F}(\bfE_\nu^F)\rightarrow \tilde{\cH}_{\bfG_{\nu'}^F}(\bfE_{\nu'}^F)\otimes \tilde{\cH}_{\bfG_{\nu''}^F}(\bfE_{\nu''}^F)$
is defined by
$$\res^\nu_{\nu',\nu''}(f)={v_q}^{\sum_{i\in I}\nu'_i\nu''_i-\sum_{h\in H}\nu'_{s(h)}\nu''_{t(h)}}\kappa_!\iota^*(f).$$
\end{definition}

We denote by $\cA=\rep_k^{F_{Q,a}}(Q)$ and define $\tilde{\cH}(\cA)=\bigoplus_{\nu\in \bbN I^a}\tilde{\cH}_{\bfG_\nu^F}(\bfE_\nu^F)$ such that all $\ind^\nu_{\nu',\nu''}$ define a multiplication $\tilde{*}$ and all $\res^{\nu}_{\nu',\nu''}$ define a multiplication $\tilde{\Delta}$ on it. 

By Lemma \ref{bijection between orbits and isomorphism classes}, there is a bijection between the set of $\bfG_\nu^F$-orbits in $\bfE^F_\nu$ and the set of isomorphism classes of objects in $\cA$ of dimension vector $\nu$ given by $\bfG^F_\nu.x\mapsto [(\bfV,x,F_{\bfV})]$. For any isomorphism class $[M]$ of objects in $\cA$ of dimension vector $\nu$, we denote by $\cO_M$ the corresponding $\bfG_\nu^F$-orbits in $\bfE^F_\nu$ and $1_{\cO_M}$ the corresponding characteristic function. We define a $\bbC$-linear isomorphism $$\Phi:\tilde{\cH}(\cA)\rightarrow \cH(\cA)$$ by 
$1_{\cO_M}\mapsto {v_q}^{\sum_{i\in I}\nu_i^2}u_{[M]}$.

\begin{proposition}\label{preserve structure constant}
The $\bbC$-linear isomorphism $\Phi:\tilde{\cH}(\cA)\rightarrow \cH(\cA)$ preserves the multiplication and the comultiplication. 
\end{proposition}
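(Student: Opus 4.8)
The plan is to reduce the statement to the classical computation of Hall numbers via counting rational points. First I would recall that $\tilde{\cH}_{\bfG_\nu^F}(\bfE_\nu^F)$ has the basis $\{1_{\cO_M}\}$ indexed by isomorphism classes $[M]$ of $F_{Q,a}$-stable representations of dimension vector $\nu$, which by Corollary \ref{corollary relation} are exactly the isomorphism classes in $\mod_{\bbF_q}\Lambda$; correspondingly $\cH(\cA)$ has the basis $\{u_{[M]}\}$. Since $\Phi$ sends $1_{\cO_M}$ to $v_q^{\sum_i \nu_i^2}u_{[M]}$ and $|\bfG_\nu^F| = \prod_{i\in I} |\textrm{GL}_{\nu_i}(\bbF_{q_i})|$ (where $q_i = q^{|\text{orbit of }i|}$ governs the Frobenius on $\bfV_i$), and since $|\Aut_{\cA}(M)|$ equals the size of the stabilizer of a point $x\in\cO_M$ in $\bfG_\nu^F$, we have $|\cO_M| = |\bfG_\nu^F|/a_M$. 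I would record this orbit-counting identity first, as it converts all the geometric pushforwards into sums over rational points weighted by $a_M$.

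Next I would compute $\Phi(1_{\cO_M} \,\tilde{*}\, 1_{\cO_N})$ and compare it with $\Phi(1_{\cO_M}) * \Phi(1_{\cO_N})$ in $\cH(\cA)$. Unwinding the definition of $\ind^\gamma_{\alpha,\beta}$, the value of ${p_3}_!{p_2}_!p_1^*(1_{\cO_M}\otimes 1_{\cO_N})$ at a point $x\in\bfE_\gamma^F$ (representing $L$) counts quadruples $(x', W, \rho_1, \rho_2)$ over $\bbF_q$ with $W$ an $x$-stable subspace such that $x_W \cong N$ and $x_{\bfV/W}\cong M$; fibering out the free $\bfG_\alpha^F\times\bfG_\beta^F$-action on the $\rho$'s, this reduces to $|\bfG_\alpha^F\times\bfG_\beta^F|\cdot g^L_{MN}\cdot(\text{correction from }a_M, a_N)$. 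Carefully tracking the factors $|\bfG_\bullet^F|$ and the orbit sizes, together with the prefactor $v_q^{-\sum_i \alpha_i\beta_i - \sum_h \alpha_{s(h)}\beta_{t(h)}}$ and the shifts $v_q^{\sum_i\gamma_i^2}$, $v_q^{-\sum_i\alpha_i^2}$, $v_q^{-\sum_i\beta_i^2}$ coming from $\Phi$, I expect the exponent of $v_q$ to collapse precisely to $\langle\hat M,\hat N\rangle = \sum_i\alpha_i\beta_i - \sum_h\alpha_{s(h)}\beta_{t(h)}$, matching the Hall multiplication. This is essentially the content of Riedtmann–Peng's formula in disguise, and the key bookkeeping is that $\sum_i\gamma_i^2 - \sum_i\alpha_i^2 - \sum_i\beta_i^2 = 2\sum_i\alpha_i\beta_i$.

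For the comultiplication I would do the analogous unwinding of $\res^\gamma_{\alpha,\beta}(1_{\cO_L})$. Here $\kappa_!\iota^*(1_{\cO_L})$ at a point $(x', x'')\in\bfE_\alpha^F\times\bfE_\beta^F$ representing $(M, N)$ counts points $x\in\bfF^F$, i.e.\ $x$-stable flags $\bfW\subset\bfV$ with $x_{\bfW}\cong N$, $x_{\bfV/\bfW}\cong M$, and $x\cong L$; dividing by the $\bfU^F$-action (which is trivial on the target) and rewriting in terms of $g^L_{MN}$, the orbit sizes $|\cO_M|, |\cO_N|, |\cO_L|$, and the automorphism orders, one obtains exactly the coefficient $v_q^{\langle\hat M,\hat N\rangle} g^L_{MN} a_M a_N a_L^{-1}$ prescribed by $\Delta$, after the prefactors from $\Phi$ and the definition of $\res$ combine. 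The main obstacle I anticipate is not conceptual but the precise reconciliation of all the $|\textrm{GL}|$-factors and powers of $v_q$: in particular keeping straight which Frobenius (the twisted $F$ versus $\tilde F$) governs the field of definition of each $\bfV_i$, and verifying that the orbit-stabilizer count $|\cO_M| = |\bfG_\nu^F|/a_M$ holds with $a_M$ computed in $\cA = \rep_k^{F_{Q,a}}(Q)$ rather than in $\rep_k(Q)$; once the dictionary between the $a$-twisted setting and $\mod_{\bbF_q}\Lambda$ is set up cleanly (which is exactly what Section \ref{Hereditary algebra and quiver with automorphism} provides), the computation proceeds as in the split case treated in \cite{fang2023parity}. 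I would therefore organize the proof as two lemmas — one matching $\tilde *$ with $*$, one matching $\tilde\Delta$ with $\Delta$ — each proved by evaluating both sides on basis elements at an arbitrary rational point.
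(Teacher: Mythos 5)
Your approach matches the paper's: evaluate both $\tilde{*}$ against $*$ and $\tilde{\Delta}$ against $\Delta$ on the basis elements $1_{\cO_M}$ at an arbitrary rational point, count, and reconcile the powers of $v_q$ using $\sum_i\gamma_i^2-\sum_i\alpha_i^2-\sum_i\beta_i^2=2\sum_i\alpha_i\beta_i$.

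Two small points of calibration. First, the orbit-counting identity $|\cO_M|=|\bfG_\nu^F|/a_M$ is not actually used: in the multiplication computation the $|\bfG_{\nu'}^F\times\bfG_{\nu''}^F|$ coming from the free $(\rho_1,\rho_2)$-action cancels directly against the normalizing prefactor in $\ind^\nu_{\nu',\nu''}$, so the whole thing stays at the level of straight point counts without any passage to sums over isomorphism classes weighted by $a_M$. Second, your description of $\kappa^{-1}(x',x'')$ as "counting $x$-stable flags $\bfW\subset\bfV$" is off: $\bfW$ is a \emph{fixed} graded subspace in the definition of $\bfF$ and $\Res$, so the fiber of $\kappa$ over $(x',x'')$ is literally an affine space of off-diagonal blocks $y\in\bigoplus_h\Hom(\bfV'_{s(h)},\bfV''_{t(h)})$, not a moduli of flags. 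The paper therefore works with the Riedtmann--Peng reformulation of $\Delta$ in terms of $|\Ext^1_\cA(M,N)_L|/|\Hom_\cA(M,N)|$ and identifies $\kappa^{-1}(x',x'')\cap\cO_L$ with $\pi^{-1}(\Ext^1_\cA(M,N)_L)$ via the standard four-term exact sequence; the $q^{\sum_i\nu'_i\nu''_i}$ from $|\ker\pi|$ is exactly what absorbs the remaining shift. Your plan of recovering the $g^L_{MN}a_Ma_Na_L^{-1}$ form is equivalent, but it forces you to reintroduce orbit sizes only to cancel them again, whereas the paper's route avoids them entirely.
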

\begin{proof}
$\bullet$ For any $M,N\in \cA$ of dimension vectors $\nu',\nu''$ respectively, we have 
\begin{align*}
&\Phi(1_{\cO_M})*\Phi(1_{\cO_N})={v_q}^{\sum_{i\in I}{\nu'_i}^2+\sum_{i\in I}{\nu''_i}^2}{v_q}^{\langle \hat{M},\hat{N}\rangle}\sum_{[L]}g^L_{MN}u_{[L]}\\
=&{v_q}^{-\sum_{i\in I}\nu'_i\nu''_i-\sum_{h\in H}\nu'_{s(h)}\nu''_{t(h)}}\sum_{[L]}g^L_{MN}\Phi(1_{\cO_L}),
\end{align*}
where we use the fact that $L\in \cA$ is of dimension vector $\nu=\nu'+\nu''$ whenever $g^L_{MN}\not=0$. For any $x\in \bfE_\nu^F$ satisfying $\nu=\nu'+\nu''$, let $L=(\bfV,x,F_{\bfV})$, then 
\begin{align*}
&({p_3}_!{p_2}_!p_1^*(1_{\cO_M}\otimes 1_{\cO_N}))(x)
=\sum_{(x,W,\rho_1,\rho_2)\in \bfE'^F}(p_1^*(1_{\cO_M}\otimes 1_{\cO_N}))(x,W,\rho_1,\rho_2)\\
=&\sum_{(x,W,\rho_1,\rho_2)\in \bfE'^F}1_{\cO_M}(\rho_1.x_{\bfV/W})1_{\cO_N}(\rho_2.x_{W})=|\tilde{G}^L_{MN}|,
\end{align*} 
where $\tilde{G}^L_{MN}$ is the set of $(x,W,\rho_1,\rho_2)\in \bfE'^F$ satisfying $\rho_1.x_{\bfV/W}\in \cO_M, \rho_2.x_{W}\in \cO_N$, equivalently, $(\bfV',\rho_1.x_{\bfV/W},F_{\bfV'})\cong M, (\bfV'',\rho_2.x_{W},F_{\bfV''})\cong N$ in $\cA$. Let $G^L_{MN}$ be the set of subobjects $L'$ of $L$ satisfying $L/L'\cong M, L'\cong N$ in $\cA$, then there is a map $\tilde{G}^L_{MN}\rightarrow G^L_{MN}$ defined by$(x,W,\rho_1,\rho_2)\mapsto (W, x_W, F_{\bfV}|_W)$
which is a surjection with fibers isomorphic to $\bfG_{\nu'}^F\times \bfG_{\nu''}^F$. Hence we have
\begin{align*}
&|\tilde{G}^L_{MN}|=|\bfG_{\nu'}^F\times \bfG_{\nu''}^F||G^L_{MN}|=|\bfG_{\nu'}^F\times \bfG_{\nu''}^F|g^L_{MN},\\
&(\ind^{\nu}_{\nu',\nu''}(1_{\cO_M}\otimes 1_{\cO_N}))(x)={v_q}^{-\sum_{i\in I}\nu'_i\nu''_i-\sum_{h\in H}\nu'_{s(h)}\nu''_{t(h)}}g^L_{MN}
\end{align*}
and so $$\ind^{\nu}_{\nu',\nu''}(1_{\cO_M}\otimes 1_{\cO_N})={v_q}^{-\sum_{i\in I}\nu'_i\nu''_i-\sum_{h\in H}\nu'_{s(h)}\nu''_{t(h)}}\sum_{[L]}g^L_{MN}1_{\cO_L}.$$
Therefore, $\Phi$ preserves the multiplication.\\
$\bullet$ For any $L\in \cA$ of dimension vector $\nu$, we have
\begin{align*}
&\Delta(\Phi(1_{\cO_L}))=\sum_{[M],[N]}{v_q}^{\sum_{i\in I}\nu_i^2}{v_q}^{\langle\hat{M},\hat{N}\rangle}\frac{|\Ext^1_{\cA}(M,N)_L|}{|\Hom_{\cA}(M,N)|}u_{[M]}\otimes u_{[N]}\\
=&\sum_{\nu',\nu''}\sum_{\cO_M\subset\bfE^F_{\nu'},\cO_N\subset \bfE_{\nu''}^F} {{v_q}}^{3\sum_{i\in I}\nu'_i\nu''_i-\sum_{h\in H}\nu'_{s(h)}\nu''_{t(h)}}\frac{|\Ext^1_{\cA}(M,N)_L|}{|\Hom_{\cA}(M,N)|}\Phi(1_{\cO_M})\otimes \Phi(1_{\cO_N}),
\end{align*}
where we use the fact that $\nu=\nu'+\nu''$ whenever $|\Ext^1_{\cA}(M,N)_L|\not=0$ for any $M,N\in \cA$ of dimension vectors $\nu',\nu''$ respectively. For any $x'\in \bfE_{\nu'}^F$ and $x''\in \bfE_{\nu''}^F$ satisfying $\nu=\nu'+\nu''$, let $M=(\bfV',x',F_{\bfV'})$ and $N=(\bfV'',x'',F_{\bfV''})$, then 
\begin{align*}
(\kappa_!\iota^*(1_{\cO_L}))(x',x'')=&\sum_{x\in \kappa^{-1}(x',x'')}(\iota^*(1_{\cO_L}))(x)\\
=&\sum_{x\in \kappa^{-1}(x',x'')}1_{\cO_L}(x)=|\kappa^{-1}(x',x'')\cap \cO_L|.
\end{align*}
We identify $\bfV\cong \bfV/\bfW\oplus \bfW\cong \bfV'\oplus \bfV''$, where $\bfW$ is the fixed $I$-graded subspace appearing in the definition of $\Res^\nu_{\nu',\nu''}$, and write elements $x\in \bfE_{\nu}^F$ into the matrix form, then the fiber of $\kappa:\bfF^F\rightarrow \bfE_{\nu'}^F\times \bfE_{\nu''}^F$ at $(x',x'')$ is identified with 
$$\{\begin{pmatrix}
x' &0\\
y &x''
\end{pmatrix}\in \bfF^F|y\in \bigoplus_{h\in H}\Hom_k(\bfV'_{s(h)},\bfV''_{t(h)})\},$$
which is isomorphic to $\bigoplus_{h\in H}\Hom_{\bbF_q}(\bbF_{q^{\nu'_{s(h)}}},\bbF_{q^{\nu''_{t(h)}}})$. Consider the exact sequence of $\bbF_q$-vector spaces 
\begin{align*}
0\rightarrow &\Hom_{\cA}(M,N)\rightarrow \bigoplus_{i\in I}\Hom_{\bbF_q}(\bbF_{q^{\nu'_i}},\bbF_{q^{\nu''_i}})\\
\rightarrow &\bigoplus_{h\in H}\Hom_{\bbF_q}(\bbF_{q^{\nu'_{s(h)}}},\bbF_{q^{\nu''_{t(h)}}})\xrightarrow{\pi} \Ext_{\cA}^1(M,N)\rightarrow 0,
\end{align*}
see \cite[Proposition 3.14, Section 3.5]{Deng-Du-Parshall-Wang-2008}, we have $\pi^{-1}(\Ext^1_{\cA}(M,N)_L)\cong \kappa^{-1}(x',x'')\cap \cO_L$ such that
$$(\res^{\nu}_{\nu',\nu''}(1_{\cO_L}))(x',x'')={v_q}^{\sum_{i\in I}\nu'_i\nu''_i-\sum_{h\in H}\nu'_{s(h)}\nu''_{t(h)}}q^{\sum_{i\in I}\nu'_i\nu''_i}\frac{|\Ext^1_{\cA}(M,N)_L|}{|\Hom_{\cA}(M,N)|},$$
and so
$$\res^{\nu}_{\nu',\nu''}(1_{\cO_L})=\sum_{[M],[N]}{{v_q}}^{3\sum_{i\in I}\nu'_i\nu''_i-\sum_{h\in H}\nu'_{s(h)}\nu''_{t(h)}}\frac{|\Ext^1_{\cA}(M,N)_L|}{|\Hom_{\cA}(M,N)|}1_{\cO_M}\otimes 1_{\cO_N},$$
Therefore, $\Phi$ preserves the comultiplication.
\end{proof}

\section{Main theorem and application}\label{Main theorem and application}

For any $\alpha,\beta,\alpha',\beta'\in \bbN I^a$ satisfying $\gamma=\alpha+\beta=\alpha'+\beta'$, let $\cN$ be the set of $\lambda=(\alpha_1,\alpha_2,\beta_1,\beta_2)\in (\mathbb{N}I)^{4}$ satisfying $$\alpha=\alpha_1+\alpha_2,\beta=\beta_1+\beta_2,\alpha'=\alpha_1+\beta_1=\alpha',\beta'=\alpha_2+\beta_2,$$ 
the the cyclic group $\langle a\rangle$ acts on $\cN$ by $a.(\alpha_1,\alpha_2,\beta_1,\beta_2)=(a.\alpha_1,a.\alpha_2,a.\beta_1,a.\beta_2)$. We denote by $\cN^a\subset \cN$ the fix point set under $a$, that is, $\cN^a=\cN\cap (\bbN I^a)^{4}$.
For any $\lambda=(\alpha_1,\alpha_2,\beta_1,\beta_2)\in \cN$, we define $$\tau_\lambda:\bfE_{\alpha_1}\times \bfE_{\alpha_2}\times \bfE_{\beta_1}\times \bfE_{\beta_2}\rightarrow \bfE_{\alpha_1}\times \bfE_{\beta_1}\times \bfE_{\alpha_2}\times \bfE_{\beta_2}$$
to be the natural isomorphism switching the second and the third coordinates. It is clear that $a^*{\tau_\lambda}_!\cong {\tau_\lambda}_!a^*$, and we have the functor $\tilde{{\tau_\lambda}_!}$.

\begin{theorem}\label{main theorem}
For any $(A,\varphi)\in \tilde{\cD}_\alpha$ and $(B,\psi) \in \tilde{\cD}_{\beta}$, we have 
\begin{equation}\label{main}
\begin{aligned}
\Res_{\alpha' ,\beta'}^{\gamma}\Ind_{\alpha,\beta}^{\gamma}((A,\varphi)\boxtimes(B,\psi))\cong (C,\phi)\oplus \!\!\!\!\bigoplus_{\lambda=(\alpha_1,\alpha_2,\beta_1,\beta_2) \in \cN^a}\!\!\!\!\!\!(\Ind_{\alpha_1,\beta_1}^{\alpha'}\boxtimes \Ind_{\alpha_2,\beta_2}^{\beta'})\tilde{{\tau_\lambda}_!}\\
(\Res_{\alpha_1,\alpha_2}^{\alpha}(A,\varphi)\boxtimes \Res_{\beta_1,\beta_2}^{\beta}(B,\psi))[-(\alpha_2,\beta_1)](-\frac{(\alpha_2,\beta_1)}{2}),
\end{aligned}
\end{equation}
where $(C,\phi)\in \tilde{\cD}_{\alpha',\beta'}$ is traceless.
\end{theorem}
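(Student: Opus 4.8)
The plan is to reduce the theorem to the non-equivariant geometric identity \eqref{Lusztig's formula} already established in \cite{fang2023parity} for equivariant mixed semisimple complexes, and then upgrade it to the periodic (``$\ \widetilde{}\ $'') setting by tracking the isomorphisms of periodic structures. First I would unravel both sides of \eqref{main} at the level of the underlying complexes in $\cD^{b,ss}_{\bfG_\gamma,\tilde m}(\bfE_\gamma)$, forgetting the periodic data $\varphi,\psi$. By \cite[Theorem 3.1]{fang2023parity} applied to the quiver $Q$ equipped with the \emph{trivial} automorphism (so that the relevant Frobenius is $\tilde F$), the complex $\Res^\gamma_{\alpha',\beta'}\Ind^\gamma_{\alpha,\beta}(A\boxtimes B)$ decomposes as $\bigoplus_{\lambda\in\cN}(\Ind^{\alpha'}_{\alpha_1,\beta_1}\boxtimes\Ind^{\beta'}_{\alpha_2,\beta_2}){\tau_\lambda}_!(\Res^\alpha_{\alpha_1,\alpha_2}(A)\boxtimes\Res^\beta_{\beta_1,\beta_2}(B))[-(\alpha_2,\beta_1)](-\tfrac{(\alpha_2,\beta_1)}2)$, where the sum runs over \emph{all} of $\cN$, not just $\cN^a$. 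So at the underlying-complex level the content is: split this sum as $\cN = \cN^a \sqcup (\cN\setminus\cN^a)$, call $C$ the part of the direct sum indexed by $\cN\setminus\cN^a$, and observe that $C$ carries a periodic structure $\phi$ making $(C,\phi)$ traceless.

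\textbf{Key steps.} I would proceed as follows. (1) Recall from the constructions in Sections \ref{Induction functor} and \ref{Restriction functor} that every functor appearing---$\tilde p_1^*,\tilde{{p_2}_\flat},\tilde{{p_3}_!},\tilde{\iota^*},\tilde{\kappa_!},\tilde{{\tau_\lambda}_!}$, together with shifts and Tate twists---is the ``tilde'' of a functor $b$ on the underlying categories commuting with $a^*$ up to a fixed natural isomorphism; hence the composite $\Res\Ind$ on $\tilde\cD$ is compatible with $\Res\Ind$ on $\cD^{b,ss}_{\ ,\tilde m}$, and the same for the right-hand functors. (2) Use \cite[Theorem 3.1]{fang2023parity} to identify the underlying complex of the left-hand side with $\bigoplus_{\lambda\in\cN}(\cdots)$. (3) Examine how $a^*$ permutes the summands: since $\tau_\lambda$ and all the maps $p_i,\iota,\kappa$ were chosen $a$-compatibly, $a^*$ sends the $\lambda$-summand to the $(a^{-1}.\lambda)$-summand (up to the canonical isomorphisms), so $a^*$ preserves the $\cN^a$-part termwise and permutes the $(\cN\setminus\cN^a)$-summands in free $\langle a\rangle$-orbits (the freeness because a $\lambda\notin\cN^a$ has stabilizer a proper subgroup of $\langle a\rangle$). (4) On the $\cN^a$-part, the given periodic structure $\varphi a^*(\xi)$ restricts, through the natural isomorphisms of Step (1), to exactly the periodic structure defining the right-hand side $(\Ind\boxtimes\Ind)\tilde{{\tau_\lambda}_!}(\Res(A,\varphi)\boxtimes\Res(B,\psi))[\cdots](\cdots)$; apply Lemma \ref{split criterion} to split this off as a genuine $\tilde\cD$-summand. (5) On the remaining part, for each free $\langle a\rangle$-orbit $\{\lambda, a.\lambda,\dots,a^{t-1}.\lambda\}$ with $t\geq 2$ dividing $n$, the summand is, as an object of $\tilde\cD$, of the form $B'\oplus a^*(B')\oplus\cdots\oplus a^{*(t-1)}(B')$ with periodic structure cyclically permuting the blocks---i.e.\ traceless in the sense of Definition \ref{periodic functor}(4)---where $B'$ is the $\lambda$-summand of the underlying complex; summing over orbits and using that a direct sum of traceless objects is traceless gives the traceless $(C,\phi)$.

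\textbf{Main obstacle.} The genuinely delicate point is Step (4): matching the \emph{periodic structure} induced on the $\cN^a$-summand of $\Res^\gamma_{\alpha',\beta'}\Ind^\gamma_{\alpha,\beta}((A,\varphi)\boxtimes(B,\psi))$ with the one produced by applying the composite of tilde-functors to $\Res^\alpha_{\alpha_1,\alpha_2}(A,\varphi)\boxtimes\Res^\beta_{\beta_1,\beta_2}(B,\psi)$. This requires that the isomorphism of \eqref{Lusztig's formula} be compatible with all the base-change and commutation isomorphisms $a^*b\cong ba^*$ in a coherent way---essentially a $2$-categorical naturality check that the equivalence of \cite[Theorem 3.1]{fang2023parity} is an isomorphism of \emph{$a^*$-equivariant} objects, not merely of objects. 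Concretely one has to revisit the proof in \cite{fang2023parity}, which runs through a sequence of cartesian squares and smooth/proper base change isomorphisms, and verify at each stage that the chosen isomorphism commutes with $a^*$; since every variety and morphism there was set up $a$-equivariantly in Sections \ref{Induction functor}--\ref{Restriction functor}, this is expected to hold, but it is the step demanding the most care. Once the periodic structure is identified on the $\cN^a$-part, the traceless conclusion for the complement and the final direct-sum decomposition via Lemma \ref{split criterion} are formal.
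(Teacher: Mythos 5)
Your proposal correctly outlines the high-level architecture of the paper's proof: decompose over $\lambda\in\cN$, split by $\langle a\rangle$-orbits, observe that orbits of size $\geq 2$ produce traceless summands, and match periodic structures on the $\cN^a$-part. You also correctly identify that matching the periodic structure on the $\cN^a$-summand with the right-hand side is the delicate step. However, what you present is a plan with the crucial verification left as ``expected to hold''; the paper's proof consists almost entirely of carrying out that verification, so the proposal should be assessed as a correct outline of the paper's approach rather than a different proof.

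Two concrete points you underestimate. First, in your Step (3) you treat \cite[Theorem 3.1]{fang2023parity} as a black box and then ask how $a^*$ permutes the $\lambda$-summands, but the isomorphism of that theorem is obtained by splitting a stratification filtration via a weight argument, and such splittings are not canonical; asserting that ``$a^*$ sends the $\lambda$-summand to the $(a^{-1}.\lambda)$-summand'' does not by itself make sense without a canonical direct-sum decomposition. The paper circumvents this by grouping the strata $\tilde{\bfF}_\lambda$ into $a$-stable pieces $\tilde{\bfF}_s$ indexed by $\langle a\rangle$-orbits \emph{before} the weight argument, so that the resulting pieces $(C_s,\phi_s)$ are genuine objects of $\tilde{\cD}_{\alpha',\beta'}$; only afterwards does it apply a refined weight argument (Lemma~\ref{weight argument}) to split $C_s$ over the individual $\lambda\in\cN_s$ at the level of underlying complexes, which is exactly what is needed to identify $(C_s,\phi_s)$ with a traceless object when $|\cN_s|\geq 2$ (Lemma~\ref{traceless term}) and with the desired summand when $|\cN_s|=1$. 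Your plan does not anticipate that this reorganized weight argument is needed. Second, your parenthetical claim that the $\langle a\rangle$-orbits in $\cN\setminus\cN^a$ are \emph{free} is false (a $\lambda\notin\cN^a$ merely has a proper stabilizer, so the orbit size $t$ is a proper divisor of $n$); this does not derail the argument, since Definition~\ref{periodic functor}(4) and Lemma~\ref{traceless term} are formulated for any $t\geq 2$ dividing $n$, but the wording should be corrected.

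Finally, your Step (4) — the ``main obstacle'' — is exactly the content of the paper's subsection ``Connection between two sides'', which walks through a chain of cartesian squares (involving $\cP_\lambda$, $\cO_\lambda$, $\cQ_\lambda$ and the maps $\xi_\lambda$, $\zeta_\lambda$) and verifies at each stage that the base-change and $\flat$-functor isomorphisms commute with $a^*$, producing the matching periodic structures via Lemma~\ref{split criterion}. You correctly predict this requires a stage-by-stage $a$-equivariance check, but predicting that a check should work is not the same as performing it. The net effect is that your proposal describes essentially the paper's route, correctly names the hard step, but stops short precisely where the real work begins.
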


The proof will be given in the next subsection. Indeed, it is a refinement of the proof of \cite[Theorem 3.1]{fang2023parity} for the consideration about the automorphisms. We only need to prove the case that $A, B$ are simple perverse sheaves which must be pure, see \cite[Theorem 5.4.12]{Pramod-2021}. Suppose they are pure of weights $\omega, \omega'$ respectively.

\subsection{Proof of the theorem}

\subsubsection{The left hand side}\

We draw the following commutative diagram containing all data we will use in this subsection. The details will be introduced later.

\begin{diagram}[midshaft,size=2em]
\bfE_\alpha \times \bfE_\beta &\lTo^{p_1} &\bfE'^{\gamma}_{\alpha,\beta} &\rTo^{p_2} &\bfE''^{\gamma}_{\alpha,\beta} &\hEq &\bfE''^{\gamma}_{\alpha,\beta} &\rTo^{p_3} &\bfE_\gamma\\
& &\uTo^{\iota_\lambda''} &\square &\uTo^{\iota'\iota_\lambda'} & &\uTo^{\iota'} &\square &\uTo_{\iota}\\
& &\cQ_\lambda &\rTo^{{p_2}_\lambda'} &\tilde{\bfF}_\lambda &\rTo^{\iota_\lambda'} &\tilde{\bfF} &\rTo^{{p_3}'} &\bfF^{\gamma}_{\alpha',\beta'}\\
&&& &\dTo^{f_\lambda} & & & &\dTo_{\kappa}\\
&&& &(\bfE''\times \bfE'')_\lambda&&&\rTo^{{p_3}_\lambda} &\bfE_{\alpha'}\times \bfE_{\beta'}
\end{diagram}

By definition, the left hand side of the formula (\ref{main}) is equal to 
$$L=\tilde{\kappa_!}\tilde{\iota^*}\tilde{{p_3}_!}\tilde{{p_2}_\flat}\tilde{p_1^*}((A,\varphi)\boxtimes (B,\psi))[M](\frac{M}{2}),$$
where $M=\sum_{i\in I}\alpha_i\beta_i+\sum_{h\in H}\alpha_{s(h)}\beta_{t(h)}-\langle\alpha',\beta'\rangle.$

Let $\tilde{\bfF}=\bfE''^\gamma_{\alpha,\beta}\times_{\bfE_\gamma}\bfF^\gamma_{\alpha',\beta'}$ be the fiber product of $p_3:\bfE''^\gamma_{\alpha,\beta}\rightarrow \bfE_\gamma$ and $\iota:\bfF^\gamma_{\alpha',\beta'}\rightarrow \bfE_\gamma$ with the natural morphisms ${p_3}':\tilde{\bfF}\rightarrow \bfF^\gamma_{\alpha',\beta'}$ and $\iota':\tilde{\bfF}\rightarrow \bfE''^\gamma_{\alpha,\beta}$, then ${\iota^*}{{p_3}_!}\cong {{p_3}'_!}{\iota'^*}$ by base change. There is an automorphism $a:\tilde{\bfF}\rightarrow \tilde{\bfF}$ induced by the automorphisms $a$ on $\bfE''^\gamma_{\alpha,\beta}, \bfE_\gamma, \bfF^\gamma_{\alpha',\beta'}$ such that $\iota'a=a\iota', a^*\iota'^*=\iota'^*a^*$ and there is a cartesian diagram
\begin{diagram}[midshaft,size=2em]
\tilde{\bfF} &\rTo^{{p_3}'} &\tilde{\bfF}^\gamma_{\alpha',\beta'}\\
\dTo^a &\square &\dTo_a\\
\tilde{\bfF} &\rTo^{{p_3}'} &\tilde{\bfF}^\gamma_{\alpha',\beta'}.
\end{diagram}
By base change, we have $a^*{p_3}'_!\cong {p_3}'_!a^*$ and the functors  $\tilde{{p_3}'_!}, \tilde{\iota'^*}$ such that $\tilde{\iota^*}\tilde{{p_3}_!}\cong \tilde{{p_3}'_!}\tilde{\iota'^*}$, and so
\begin{equation}\label{first step}
L\cong \tilde{\kappa_!}\tilde{{p_3}'_!}\tilde{\iota'^*}\tilde{{p_2}_\flat}\tilde{p_1^*}((A,\varphi)\boxtimes (B,\psi))[M](\frac{M}{2}).
\end{equation}

Note that $\tilde{\bfF}$ consists of $(x,W)$, where $x\in \bfE_\gamma$ such that $\bfW^{\beta'}$ is $x$-stable, where $\bfW^{\beta'}\subset \bfV^\gamma$ is the fixed $I$-graded subspace appearing in the definition of $\Res^\gamma_{\alpha',\beta'}$, and $W\subset \bfV^\gamma$ is a $x$-stable, $I$-graded $k$-vector space of dimension vector $\beta$. For any $\lambda=(\alpha_1,\alpha_2,\beta_1,\beta_2)\in \cN$, let $\tilde{\bfF}_\lambda\subset \tilde{\bfF}$ be the locally closed subvariety consisting of $(x,W)$ such that the dimension vector of $W\cap\bfW^{\beta'}$ is $\beta_2$ and $\iota_\lambda':\tilde{\bfF}_\lambda\rightarrow \tilde{\bfF}$ be the inclusion, we denote by $(\bfE''\times \bfE'')_\lambda=\bfE''^{\alpha'}_{\alpha_1,\beta_1}\times \bfE''^{\beta'}_{\alpha_2,\beta_2}$ and ${p_3}_\lambda=p_3\times p_3:(\bfE''\times \bfE'')_\lambda\rightarrow \bfE_{\alpha'}\times \bfE_{\beta'}$, then there is a morphism $f_\lambda:\tilde{\bfF}_\lambda\rightarrow (\bfE''\times \bfE'')_\lambda$ defined by 
\begin{align*}
(x,W)\mapsto ((\rho_1.x_{\bfV^\gamma/\bfW^{\beta'}},\rho_1(W/W\cap\bfW^{\beta'})), (\rho_2.x_{\bfW^{\beta'}}, \rho_2(W\cap\bfW^{\beta'}))),
\end{align*}
where $\rho_1:\bfV^\gamma/\bfW^{\beta'}\rightarrow \bfV^{\alpha'},\rho_2:\bfW^{\beta'}\rightarrow \bfV^{\beta'}$ are the fixed $I$-graded linear isomorphisms appearing in the definition of $\Res^\gamma_{\alpha',\beta'}$, and  $W/W\cap \bfW^{\beta'}\cong W+\bfW^{\beta'}/\bfW^{\beta'}$ is viewed as a subspace of $\bfV^\gamma/\bfW^{\beta'}$, such that ${p_3}_\lambda f_\lambda=\kappa{p_3}'\iota_\lambda'$. By \cite[Lemma 3.2]{fang2023parity}, $f_\lambda$ is a locally trivial vector bundle of rank
$$r_{\lambda}=\sum_{h\in H}(\alpha_{1s(h)}\alpha_{2t(h)}+\alpha_{1s(h)}\beta_{2t(h)}+\beta_{1s(h)}\beta_{2t(h)})+\sum_{i\in I}\alpha_{2i}\beta_{1i}.$$
By \cite[Corollary 3.7]{fang2023parity}, we have 
\begin{equation}\label{base isomorphism}
\kappa_!{p_3}'_!\iota'^*{p_2}_\flat p_1^*(A\boxtimes B)[M](\frac{M}{2})\cong\bigoplus_{\lambda\in \cN}{{p_3}_\lambda}_!{f_\lambda}_!\iota_\lambda'^*\iota'^*{p_2}_\flat p_1^*(A\boxtimes B)[M](\frac{M}{2}).
\end{equation}

We divide $\cN=\bigsqcup_{s}\cN_s$ into the disjoint union of $\langle a\rangle$-orbits. For any $\cN_s$, let 
$$\tilde{\bfF}_s=\bigsqcup_{\lambda\in \cN_s}\tilde{\bfF}_\lambda,\  (\bfE''\times\bfE'')_s=\bigsqcup_{\lambda\in\cN_s}(\bfE''\times \bfE'')_\lambda$$
and $\iota_s':\tilde{\bfF}_s\rightarrow \tilde{\bfF}, f_s:\tilde{\bfF}_s\rightarrow (\bfE''\times\bfE'')_s, {p_3}_s:(\bfE''\times\bfE'')_s\rightarrow \bfE_{\alpha'}\times \bfE_{\beta'}$ be the morphisms assembled by $\iota_\lambda', f_\lambda, {p_3}_\lambda$ respectively, then ${p_3}_s f_s=\kappa{p_3}'\iota_s'$.

\begin{lemma}\label{weight argument}
For any $\langle a\rangle$-orbit $\cN_s$ in $\cN$, we have 
$${{p_3}_s}_!{f_s}_!\iota_s'^*\iota'^*{p_2}_\flat p_1^*(A\boxtimes B)[M](\frac{M}{2})\cong \bigoplus_{\lambda\in \cN_s}{{p_3}_\lambda}_!{f_\lambda}_!\iota_\lambda'^*\iota'^*{p_2}_\flat p_1^*(A\boxtimes B)[M](\frac{M}{2}).$$
\end{lemma}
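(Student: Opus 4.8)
The plan is to reduce Lemma \ref{weight argument} to a purely combinatorial splitting statement for objects in the periodic category $(\cD^{b,ss}_{\bfG_{\alpha'}\times\bfG_{\beta'},\tilde m}(\bfE_{\alpha'}\times\bfE_{\beta'}))^{\widetilde{}}$. First I would observe that, after identifying $\tilde\bfF_s=\bigsqcup_{\lambda\in\cN_s}\tilde\bfF_\lambda$ and $(\bfE''\times\bfE'')_s=\bigsqcup_{\lambda\in\cN_s}(\bfE''\times\bfE'')_\lambda$ as disjoint unions, the functors ${f_s}_!$, $\iota_s'^*$, ${{p_3}_s}_!$ restrict on each component $\lambda$ to the corresponding $f_\lambda$, $\iota_\lambda'$, ${p_3}_\lambda$; hence at the level of underlying complexes, forgetting the Weil/periodic structure, the isomorphism is exactly \cite[Corollary 3.7]{fang2023parity} restricted to the single orbit $\cN_s$, namely
\begin{equation}\label{underlying}
{{p_3}_s}_!{f_s}_!\iota_s'^*\iota'^*{p_2}_\flat p_1^*(A\boxtimes B)[M](\tfrac{M}{2})\cong\bigoplus_{\lambda\in\cN_s}{{p_3}_\lambda}_!{f_\lambda}_!\iota_\lambda'^*\iota'^*{p_2}_\flat p_1^*(A\boxtimes B)[M](\tfrac{M}{2}).
\end{equation}
So the content of the lemma is that this underlying isomorphism can be promoted to an isomorphism in the periodic category compatible with the $a^*$-structure $\tilde{{p_2}_\flat}\tilde p_1^*((A,\varphi)\boxtimes(B,\psi))$ transports along.

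Next I would make precise how the automorphism $a$ permutes the strata. Since $a.\lambda$ runs over $\cN_s$ as $\lambda$ does, $a$ carries $\tilde\bfF_\lambda$ isomorphically onto $\tilde\bfF_{a.\lambda}$ and likewise $(\bfE''\times\bfE'')_\lambda\to(\bfE''\times\bfE'')_{a.\lambda}$, compatibly with $f_\lambda$, $\iota_\lambda'$, ${p_3}_\lambda$; thus on $\tilde\bfF_s$ and $(\bfE''\times\bfE'')_s$ the automorphism $a$ is simply the ``block'' automorphism cyclically permuting the $|\cN_s|$ components (and acting within each via the honest $a$ when the component is $a$-fixed). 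Consequently the right-hand side of \eqref{underlying}, as an object with its $a^*$-action, is a direct sum of summands that $a^*$ permutes cyclically around the orbit $\cN_s$. If $|\cN_s|=t\geq 2$, pick any $\lambda_0\in\cN_s$ and set $B_0={{p_3}_{\lambda_0}}_!{f_{\lambda_0}}_!\iota_{\lambda_0}'^*\iota'^*{p_2}_\flat p_1^*(A\boxtimes B)[M](\frac{M}{2})$; then $a^{*t}(B_0)\cong B_0$ and the whole right side is $B_0\oplus a^*(B_0)\oplus\cdots\oplus a^{*(t-1)}(B_0)$ with $a^*$ acting by the cyclic shift — this is exactly the definition of a traceless object. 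In that case both sides of the asserted isomorphism in Lemma \ref{weight argument} are the same traceless object, with the same $a^*$-structure inherited from the geometry, so the isomorphism holds tautologically. If $|\cN_s|=1$, the lemma is literally an identity of functors applied to $(A,\varphi)\boxtimes(B,\psi)$, nothing to prove.

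The one genuine subtlety — and what I expect to be the main obstacle — is checking that the underlying geometric isomorphism \eqref{underlying} is $a$-equivariant \emph{on the nose}, i.e.\ that the splitting produced by \cite[Corollary 3.7]{fang2023parity} (obtained there by a weight/parity argument ruling out extensions between strata of different dimension) can be chosen to intertwine the $a^*$-actions, rather than merely to exist abstractly. Here I would invoke the purity of $A$ and $B$ (they are simple perverse sheaves, pure of weights $\omega,\omega'$ by \cite[Theorem 5.4.12]{Pramod-2021}) together with the fact that the stratum-shifts $r_\lambda$ have a fixed parity independent of $\lambda\in\cN$ (as in \cite{fang2023parity}); this forces $\Hom$ and $\Ext^1$ between distinct stratum-contributions to vanish in the relevant degrees, so the decomposition into the pieces indexed by $\lambda$ is \emph{canonical}, hence automatically preserved by any automorphism of the situation, in particular by $a^*$. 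With canonicity in hand, the $a^*$-structure on the left side is forced to be the direct sum of the pieces permuted as above, and Lemma \ref{split criterion} (applied componentwise within the orbit, or simply the observation that a canonical decomposition is respected by $a$) finishes the argument. I would therefore organize the write-up as: (i) recall the stratification and that $a$ permutes strata within $\cN_s$; (ii) recall from \cite{fang2023parity} that the parity of $r_\lambda$ is constant on $\cN$ and deduce the canonicity of the stratum decomposition; (iii) conclude that the left side, with its $a^*$-structure, is the cyclically-permuted direct sum over $\cN_s$, which is precisely the right side.
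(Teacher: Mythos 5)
The lemma as stated is an isomorphism of underlying complexes in $\cD^{b,ss}_{\bfG_{\alpha'}\times\bfG_{\beta'},\tilde m}(\bfE_{\alpha'}\times\bfE_{\beta'})$; no $a^*$-structure is asserted. You misidentify the claim as a statement in the periodic category, and the bulk of your argument (the cyclic permutation of strata, the tracelessness when $|\cN_s|\geq 2$, the discussion of $a$-equivariance of the splitting) addresses the next step of the paper — the definition of $(C_s,\phi_s)$ and Lemma~\ref{traceless term} — rather than Lemma~\ref{weight argument} itself. That material is premature here and should not be part of this proof.

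For the actual content of the lemma, your justification is ``the isomorphism is exactly [Corollary 3.7 of Fang--Lan--Xiao] restricted to the single orbit $\cN_s$.'' That is not a valid deduction. The cited corollary (recalled as (\ref{base isomorphism}) in this paper) decomposes the direct image over all of $\tilde{\bfF}$; its $\cN_s$-summand is a priori a different object from ${{p_3}_s}_!{f_s}_!\iota_s'^*\iota'^*{p_2}_\flat p_1^*(A\boxtimes B)$, which is the direct image over the locally closed subset $\tilde{\bfF}_s$. To identify the two, or to split the latter directly, one needs a genuine argument — and this is exactly what the paper supplies: a filtration of $\tilde{\bfF}_s$ by the dimension of its strata, the elementary open-and-closed splitting within each equidimensional piece $\tilde{\bfF}_{s,m}$, and an induction in which the connecting morphisms $\delta_t$ of the resulting distinguished triangles are killed by a purity argument (perverse cohomologies of pure complexes in consecutive degrees live in distinct weights, so $\Hom$ between them vanishes), after which the short exact sequences of pure semisimple perverse sheaves split. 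Your write-up invokes purity and ``canonicity'' only as a side remark about the ambient $\cN$-decomposition and never runs this induction on $\tilde{\bfF}_s$; as it stands the underlying-complex isomorphism — the only thing the lemma actually asserts — is not established.
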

\begin{proof}
For any $m\in \bbN$, let $\tilde{\bfF}_{s,m}\subset \tilde{\bfF}_s$ be the disjoint union of those $\tilde{\bfF}_\lambda\subset \tilde{\bfF}_s$ which are of dimension $m$, $\cN_{s,m}=\{\lambda\in \cN_s|\tilde{\bfF}_\lambda\subset \tilde{\bfF}_{s,m}\},(\bfE''\times\bfE'')_{s,m}=\bigsqcup_{\lambda\in \cN_{s,m}}(\bfE''\times \bfE'')_\lambda$ and $\iota'_{s,m}:\tilde{\bfF}_{s,m}\rightarrow \tilde{\bfF}, f_{s,m}:\tilde{\bfF}_{s,m}\rightarrow (\bfE''\times\bfE'')_{s,m}, {p_3}_{s,m}:(\bfE''\times\bfE'')_{s,m}\rightarrow \bfE_{\alpha'}\times \bfE_{\beta'}$ be the morphisms assembled by $\iota_\lambda', f_\lambda, {p_3}_\lambda$ respectively, then ${p_3}_{s,m} f_{s,m}=\kappa{p_3}'\iota'_{s,m}$. For convenience, we set $\tilde{\bfF}_{s,m}=\varnothing$ for $m<0$ and $X=\iota'^*{p_2}_\flat p_1^*(A\boxtimes B)$.\\
$\bullet$ For any $m\in \bbN, \lambda\in \cN_{s,m}$, the subvariety $\tilde{\bfF}_\lambda\subset \tilde{\bfF}_{s,m}$ is both open and closed, thus
\begin{equation}\label{open and closed}
{{p_3}_{s,m}}_!{f_{s,m}}_!\iota_{s,m}'^*(X)\cong \bigoplus_{\lambda\in \cN_{s,m}}{{p_3}_\lambda}_!{f_\lambda}_!\iota_\lambda'^*(X).
\end{equation}
$\bullet$ For any $m\in \bbN $, let $\tilde{\bfF}_{s,\leqslant m}=\bigsqcup_{m'\leqslant m}\tilde{\bfF}_{s,m'}$ and $\iota'_{s,\leqslant m}:\tilde{\bfF}_{s,\leqslant m}\rightarrow \tilde{\bfF}$ be the inclusion, then $\tilde{\bfF}_{s,\leqslant m-1}\subset \tilde{\bfF}_{s,\leqslant m}$ is closed with the open complement $\tilde{\bfF}_{s,m}$, then there is a canonical distinguished triangle
$$j_!j^*\iota_{s,\leqslant m}'^*(X)\rightarrow \iota_{s,\leqslant m}'^*(X)\rightarrow i_!i^*\iota_{s,\leqslant m}'^*(X),$$
where $j:\tilde{\bfF}_{s,m}\rightarrow \tilde{\bfF}_{s,\leqslant m}$ and $i:\tilde{\bfF}_{s,\leqslant m-1}\rightarrow \tilde{\bfF}_{s,\leqslant m}$ are the inclusions such that $\iota'_{s,\leqslant m}j=\iota'_{s,m}$ and $\iota'_{s,\leqslant m}i=\iota'_{s,\leqslant m-1}$. Applying the triangulated functor $\kappa_!{p_3}'_!{\iota'_{s,\leqslant m}}_!$, we obtain the distinguished triangle
\begin{align*}
\kappa_!{p_3}'_!{\iota'_{s,m}}_!\iota_{s,m}'^*(X)\rightarrow \kappa_!{p_3}'_!{\iota'_{s,\leqslant m}}_!\iota_{s,\leqslant m}'^*(X)\rightarrow \kappa_!{p_3}'_!{\iota'_{s,\leqslant m-1}}_!\iota_{s,\leqslant m-1}'^*(X)\xrightarrow{+1}.
\end{align*}
Applying the perverse cohomology functors ${{}^pH^t}$ for $t\in \bbZ$, we obtain the long exact sequences of perverse sheaves
\begin{equation}\label{long sequence}
\begin{aligned}
...&\rightarrow {{}^pH^{t-1}}(\kappa_!{p_3}'_!{\iota'_{s,\leqslant m-1}}_!\iota_{s,\leqslant m-1}'^*(X))\xrightarrow{\delta_t} {{}^pH^t}(\kappa_!{p_3}'_!{\iota'_{s,m}}_!\iota_{s,m}'^*(X))\\
&\rightarrow {{}^pH^t}(\kappa_!{p_3}'_!{\iota'_{s,\leqslant m}}_!\iota_{s,\leqslant m}'^*(X))\rightarrow {{}^pH^t}(\kappa_!{p_3}'_!{\iota'_{s,\leqslant m-1}}_!\iota_{s,\leqslant m-1}'^*(X))\rightarrow....
\end{aligned}
\end{equation}
We make an induction on $m$ to prove the following statements.\\
{\rm{(a)}} For any $m\in \bbN,t\in \bbZ$, the connecting morphism $\delta_t=0$.\\
{\rm{(b)}} For any $m\in \bbN$, the complex $\kappa_!{p_3}'_!{\iota'_{s,\leqslant m}}_!\iota_{s,\leqslant m}'^*(X)$ is pure of weight $\omega+\omega'$ and 
\begin{equation}\label{dimension inductive}
\kappa_!{p_3}'_!{\iota'_{s,\leqslant m}}_!\iota_{s,\leqslant m}'^*(X)\cong \kappa_!{p_3}'_!{\iota'_{s,m}}_!\iota_{s,m}'^*(X)\oplus \kappa_!{p_3}'_!{\iota'_{s,\leqslant m-1}}_!\iota_{s,\leqslant m-1}'^*(X).
\end{equation}
If $m<0$, the statements hold trivially. Assume the statements hold for $m-1$, then ${}^pH^{t-1}(\kappa_!{p_3}'_!{\iota'_{s,\leqslant n-1}}_!\iota_{s,\leqslant n-1}'^*(X))$ is pure of weight $\omega+\omega'+t-1$. By \cite[Corollary 3.5]{fang2023parity} and (\ref{open and closed}), the complex 
$$\kappa_!{p_3}'_!{\iota'_{s,m}}_!\iota_{s,m}'^*(X)={{p_3}_{s,m}}_!{f_{s,m}}_!\iota_{s,m}'^*(X)\cong \bigoplus_{\lambda\in \cN_{s,m}}{{p_3}_\lambda}_!{f_\lambda}_!\iota_\lambda'^*(X)$$
is pure of weight $\omega+\omega'$, and then ${{}^pH^t}(\kappa_!{p_3}'_!{\iota'_{s,m}}_!\iota_{s,m}'^*(X))$ is pure of weight $\omega+\omega'+t$. Hence $\delta_t$ is a morphism between two perverse sheaves of different weights which must be zero. This proves the statement {\rm{(a)}} for $m$. For any $t\in \bbZ$, by (\ref{long sequence}), we have the short exact sequence 
\begin{equation}\label{short sequence}
\begin{aligned}
0\rightarrow &{{}^pH^t}(\kappa_!{p_3}'_!{\iota'_{s,m}}_!\iota_{s,m}'^*(X))\rightarrow {{}^pH^t}(\kappa_!{p_3}'_!{\iota'_{s,\leqslant m}}_!\iota_{s,\leqslant m}'^*(X))\\
\rightarrow &{{}^pH^t}(\kappa_!{p_3}'_!{\iota'_{s,\leqslant m-1}}_!\iota_{s,\leqslant m-1}'^*(X))\rightarrow0,
\end{aligned}
\end{equation}
then ${{}^pH^t}(\kappa_!{p_3}'_!{\iota'_{s,\leqslant m}}_!\iota_{s,\leqslant m}'^*(X))$ is pure of weight $\omega+\omega'+t$, since ${{}^pH^t}(\kappa_!{p_3}'_!{\iota'_{s,m}}_!\iota_{s,m}'^*(X))$ and ${{}^pH^t}(\kappa_!{p_3}'_!{\iota'_{s,\leqslant m-1}}_!\iota_{s,\leqslant m-1}'^*(X))$ are pure of weight $\omega+\omega'+t$. Hence the complex $\kappa_!{p_3}'_!{\iota'_{s,\leqslant m}}_!\iota_{s,\leqslant m}'^*(X)$ is pure of weight $\omega+\omega'$. By \cite[Theorem 5.4.19]{Pramod-2021}, the short exact sequence (\ref{short sequence}) is about semisimple perverse sheaves which must split, and so 
\begin{align*}
{{}^pH^t}(\kappa_!{p_3}'_!{\iota'_{s,\leqslant m}}_!\iota_{s,\leqslant m}'^*(X))\cong {{}^pH^t}(\kappa_!{p_3}'_!{\iota'_{s,m}}_!\iota_{s,m}'^*(X))\oplus {{}^pH^t}(\kappa_!{p_3}'_!{\iota'_{s,\leqslant m-1}}_!\iota_{s,\leqslant m-1}'^*(X))
\end{align*}
for any $t\in \bbZ$. Therefore, 
$$\kappa_!{p_3}'_!{\iota'_{s,\leqslant m}}_!\iota_{s,\leqslant m}'^*(X)\cong \kappa_!{p_3}'_!{\iota'_{s,m}}_!\iota_{s,m}'^*(X)\oplus \kappa_!{p_3}'_!{\iota'_{s,\leqslant m-1}}_!\iota_{s,\leqslant m-1}'^*(X).$$
This proves the statement {\rm{(b)}} for $m$, as desired.\\
$\bullet$ If $m$ is large enough, then $\tilde{\bfF}_{s,\leqslant m}=\tilde{\bfF}_s, \iota'_{s,\leqslant m}=\iota_s'$ and $\cN_s=\bigsqcup_{0\leqslant m'\leqslant m}\cN_{s,m'}$. Moreover, by (\ref{open and closed}) and (\ref{dimension inductive}), we have 
\begin{align*}
&{{p_3}_s}_!{f_s}_!\iota_s'^*(X)=\kappa_!{p_3}'_!{\iota'_s}_!\iota_s'^*(X)=\kappa_!{p_3}'_!{\iota'_{s,\leqslant m}}_!\iota_{s,\leqslant m}'^*(X)\\
\cong &\bigoplus_{0\leqslant m'\leqslant m}\kappa_!{p_3}'_!{\iota'_{s,m'}}_!\iota_{s,m'}'^*(X)\cong \bigoplus_{0\leqslant m'\leqslant m}\bigoplus_{\lambda\in \cN_{s,m'}}{{p_3}_\lambda}_!{f_\lambda}_!{\iota_\lambda'}^*(X)\\
=&\bigoplus_{\lambda\in \cN_s}{{p_3}_\lambda}_!{f_\lambda}_!{\iota_\lambda'}^*(X),
\end{align*}
as desired.
\end{proof}

Note that the automorphism $a:\tilde{\bfF}\rightarrow \tilde{\bfF}$ restricts to an automorphism $a:\tilde{\bfF}_s\rightarrow \tilde{\bfF}_s$ such that $a(\tilde{\bfF}_\lambda)=\tilde{\bfF}_{a.\lambda}$, and there is an automorphism $a:(\bfE''\times\bfE'')_s\rightarrow (\bfE''\times\bfE'')_s$ defined by
$$a((x_1,W_1),(x_2,W_2))=((a(x_1),a_{\bfV^{\alpha'}}(W_1)),(a(x_2),a_{\bfV^{\beta'}}(W_2)))$$
for any $((x_1,W_1),(x_2,W_2))\in \bfE''^{\alpha'}_{\alpha_1,\beta_1}\times \bfE''^{\beta'}_{\alpha_2,\beta_2}$ such that $a((\bfE''\times \bfE'')_\lambda)=(\bfE''\times \bfE'')_{a.\lambda}$. By $\iota'_s a=a\iota_s'$ and $a^*\iota_s'^*=\iota_s'^*a^*$, we have the functor $\tilde{\iota_s'^*}$. By the cartesian diagrams
\begin{diagram}[midshaft,size=2em]
\tilde{\bfF}_s &\rTo^{f_s} &(\bfE''\times \bfE'')_s && &(\bfE''\times \bfE'')_s &\rTo^{{p_3}_s} &\bfE_{\alpha'}\times \bfE_{\beta'}\\
\dTo^a &\square &\dTo_a && &\dTo^a &\square &\dTo_a\\
\tilde{\bfF}_s &\rTo^{f_s} &(\bfE''\times \bfE'')_s && &(\bfE''\times \bfE'')_s &\rTo^{{p_3}_s} &\bfE_{\alpha'}\times \bfE_{\beta'}
\end{diagram}
and base change, we have $a^*{f_s}_!\cong {f_s}_!a^*, a^*{{p_3}_s}_!\cong {{p_3}_s}_!a^*$ and the functors $\tilde{{f_s}_!}, \tilde{{{p_3}_s}_!}$. We set 
$$(C_s,\phi_s)=\tilde{{{p_3}_s}_!}\tilde{{f_s}_!}\tilde{\iota_s'^*}\tilde{\iota'^*}\tilde{{p_2}_\flat}\tilde{p_1^*}((A,\varphi)\boxtimes (B,\psi))[M](\frac{M}{2}),$$
where $C_s={{p_3}_s}_!{f_s}_!\iota_s'^*\iota'^*{p_2}_\flat p_1^*(A\boxtimes B)[M](\frac{M}{2}), \phi_s:a^*(C_s)\rightarrow C_s$ is induced by the isomorphisms $\varphi:a^*(A)\rightarrow A,\psi:a^*(B)\rightarrow B$ together with  $a^*{{p_3}_s}_!\cong {{p_3}_s}_!a^*, a^*{f_s}_!\cong {f_s}_!a^*, a^*\iota_s'^*=\iota_s'^*a^*, a^*\iota'^*=\iota'^*a^* ,a^*{p_2}_\flat\cong {p_2}_\flat a^*, a^*p_1^*=p_1^*a^*$.

\begin{lemma}\label{traceless term}
For any $\langle a\rangle$-orbit $\cN_s$ in $\cN$, if $|\cN_s|\geqslant 2$, then $(C_s,\phi_s)$ is traceless.
\end{lemma}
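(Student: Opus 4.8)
The plan is to exhibit the data required by the definition of a traceless object recalled in Section~\ref{periodic functor}: a complex $B$, an integer $t\geqslant 2$ dividing $n$, an isomorphism $a^{*t}(B)\cong B$, a decomposition $C_s\cong B\oplus a^*(B)\oplus\cdots\oplus a^{*(t-1)}(B)$ in $\cD^{b,ss}_{\bfG_{\alpha'}\times\bfG_{\beta'},\tilde{m}}(\bfE_{\alpha'}\times\bfE_{\beta'})$, and the identification of $\phi_s$ with the associated cyclic-permutation isomorphism.

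First I would record that, because $\tilde{\bfF}_s=\bigsqcup_{\lambda\in\cN_s}\tilde{\bfF}_\lambda$ is a disjoint union and $\iota_s',f_s,{p_3}_s$ are assembled from $\iota_\lambda',f_\lambda,{p_3}_\lambda$, the composite functor ${{p_3}_s}_!{f_s}_!\iota_s'^*$ splits as a direct sum over $\cN_s$: setting $C_\lambda={{p_3}_\lambda}_!{f_\lambda}_!\iota_\lambda'^*\iota'^*{p_2}_\flat p_1^*(A\boxtimes B)[M](\frac{M}{2})$ one gets $C_s=\bigoplus_{\lambda\in\cN_s}C_\lambda$. Next I would analyse $\phi_s$. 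The automorphism $a$ carries $\tilde{\bfF}_\lambda$ to $\tilde{\bfF}_{a.\lambda}$ and $(\bfE''\times\bfE'')_\lambda$ to $(\bfE''\times\bfE'')_{a.\lambda}$; combining the cartesian squares displayed before the lemma (restricted to these pieces), the equality $a^*\iota'^*=\iota'^*a^*$, the $\langle a\rangle$-structure $\xi\colon a^*(X)\to X$ on $X=\iota'^*{p_2}_\flat p_1^*(A\boxtimes B)$ coming from $\tilde{\iota'^*}\tilde{{p_2}_\flat}\tilde{p_1^*}((A,\varphi)\boxtimes(B,\psi))$, and base change, one obtains isomorphisms $\eta_\lambda\colon a^*(C_{a.\lambda})\xrightarrow{\sim}C_\lambda$. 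The key point will be that $\phi_s=\bigoplus_{\lambda\in\cN_s}\eta_\lambda$, that is, under $a^*(C_s)=\bigoplus_\mu a^*(C_\mu)$ the morphism $\phi_s$ maps the summand $a^*(C_\mu)$ isomorphically onto $C_{a^{-1}.\mu}$ via $\eta_{a^{-1}.\mu}$ and has no other components, since the base-change isomorphisms that assemble $\phi_s$ are themselves built piecewise and therefore shift the $\cN_s$-index by $a^{-1}$.

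Granting this, I would finish as follows. Fix $\lambda_0\in\cN_s$ and put $t=|\cN_s|$, so $t\geqslant 2$ by hypothesis and $t\mid n$ because $\cN_s$ is an $\langle a\rangle$-orbit and $a^n=1$; set $B=C_{\lambda_0}$. Rewriting $\eta$ as $C_{a^{-1}.\mu}\cong a^*(C_\mu)$ and iterating gives $C_{a^{-r}.\lambda_0}\cong a^{*r}(B)$ for $0\leqslant r\leqslant t$; the case $r=t$ yields $a^{*t}(B)\cong B$ (as $a^{-t}.\lambda_0=\lambda_0$), and since the $a^{-r}.\lambda_0$ for $0\leqslant r\leqslant t-1$ exhaust $\cN_s$ we obtain $C_s=\bigoplus_{r=0}^{t-1}C_{a^{-r}.\lambda_0}\cong\bigoplus_{r=0}^{t-1}a^{*r}(B)$. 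Transporting $\phi_s=\bigoplus_\lambda\eta_\lambda$ through these identifications: since the identification $C_{a^{-(r+1)}.\lambda_0}\cong a^{*(r+1)}(B)$ was obtained precisely from $\eta_{a^{-(r+1)}.\lambda_0}$, the component of $\phi_s$ on $a^*(a^{*r}(B))=a^{*(r+1)}(B)$ becomes the identity onto $a^{*(r+1)}(B)$ for $0\leqslant r\leqslant t-2$, while the component on $a^*(a^{*(t-1)}(B))=a^{*t}(B)$ becomes the chosen isomorphism $a^{*t}(B)\cong B$. This is exactly the form in the definition of a traceless object, so $(C_s,\phi_s)$ is traceless.

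I expect the only real difficulty to be the bookkeeping behind the assertion $\phi_s=\bigoplus_\lambda\eta_\lambda$ with no diagonal terms — tracking the base-change and commutation isomorphisms compatibly with the decomposition $C_s=\bigoplus_\lambda C_\lambda$ — and then confirming that the resulting off-diagonal isomorphism is the standard cyclic permutation of $\bigoplus_{r=0}^{t-1}a^{*r}(B)$. The remaining ingredients (splitting of $!$-pushforward along a disjoint union, proper base change for the squares involving $a$, and the functoriality already established in Sections~\ref{Induction functor} and~\ref{Restriction functor}) are formal.
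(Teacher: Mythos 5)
Your argument has the same overall shape as the paper's proof, but the justification of the opening decomposition $C_s\cong\bigoplus_{\lambda\in\cN_s}C_\lambda$ is where a genuine gap appears. You claim this splitting is automatic because $\tilde{\bfF}_s=\bigsqcup_{\lambda\in\cN_s}\tilde{\bfF}_\lambda$ and the morphisms are assembled piecewise. That reasoning would be fine if the $\tilde{\bfF}_\lambda$ were open-and-closed in $\tilde{\bfF}_s$, but they are only locally closed: $\tilde{\bfF}_s$ is a constructible subvariety of $\tilde{\bfF}$ stratified by the $\tilde{\bfF}_\lambda$, and pull-back-then-push-forward along such a stratification produces a filtration whose associated distinguished triangles need not split. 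This splitting is precisely what the paper establishes as Lemma~\ref{weight argument}, whose proof runs an induction on the dimensions of the strata, uses \cite[Corollary 3.5]{fang2023parity} to get purity of each graded piece, deduces that the connecting morphisms vanish for weight reasons, and then invokes \cite[Theorem 5.4.19]{Pramod-2021} to split the resulting short exact sequences of semisimple perverse sheaves. You need to invoke Lemma~\ref{weight argument} (or reproduce an argument of that kind) rather than treat the direct-sum decomposition as formal.

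Once that decomposition is in hand, the remainder of your argument is correct and matches the paper's: the construction of $\eta_\lambda\colon a^*(C_{a.\lambda})\xrightarrow{\sim}C_\lambda$ from the restricted cartesian squares, the equalities $a^*\iota'^*=\iota'^*a^*$ and $a^*p_1^*=p_1^*a^*$, and the isomorphisms $\varphi,\psi$; the identification $\phi_s=\bigoplus_\lambda\eta_\lambda$; and the reorganization of the summands into the cyclic-permutation form required by the definition of a traceless object. This is exactly what the paper does, phrased there as $a^*Y_{t'+1}\cong Y_{t'}$, with the same conclusion.
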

\begin{proof}
Suppose $|\cN_s|=t$ and $\cN_s=\{\lambda, a.\lambda,...a^{t-1}.\lambda\}$, so $t|n$. By Lemma \ref{weight argument}, we have 
$${{p_3}_s}_!{f_s}_!\iota_s'^*\iota'^*{p_2}_\flat p_1^*(A\boxtimes B)[M](\frac{M}{2})\cong \bigoplus_{0\leqslant t'\leqslant t-1}\! {{p_3}_{(a^{t'}.\lambda)}}_!{f_{(a^{t'}.\lambda)}}_!\iota_{(a^{t'}.\lambda)}'^*\iota'^*{p_2}_\flat p_1^*(A\boxtimes B)[M](\frac{M}{2}).$$
For convenience, we denote by $Y_{t'}={{p_3}_{(a^{t'}.\lambda)}}_!{f_{(a^{t'}.\lambda)}}_!\iota_{(a^{t'}.\lambda)}'^*\iota'^*{p_2}_\flat p_1^*(A\boxtimes B)[M](\frac{M}{2})$ for $t'\in \bbZ/t\bbZ$. Note that $\iota_{(a^{t'+1}.\lambda)}'a=a\iota_{(a^{t'}.\lambda)}'$ and there are cartesian diagrams
\begin{diagram}[midshaft,size=2em]
\tilde{\bfF}_{(a^{t'}.\lambda)} &\rTo^{f_{(a^{t'}.\lambda)}} &(\bfE''\times \bfE'')_{(a^{t'}.\lambda)} && &(\bfE''\times \bfE'')_{(a^{t'}.\lambda)} &\rTo^{{p_3}_{(a^{t'}.\lambda)}} &\bfE_{\alpha'}\times \bfE_{\beta'}\\
\dTo^a &\square &\dTo_a && &\dTo^a &\square &\dTo_a\\
\tilde{\bfF}_{(a^{t'+1}.\lambda)} &\rTo^{f_{(a^{t'+1}.\lambda)}} &(\bfE''\times \bfE'')_{(a^{t'+1}.\lambda)}, && &(\bfE''\times \bfE'')_{(a^{t'+1}.\lambda)} &\rTo^{{p_3}_{(a^{t'+1}.\lambda)}} &\bfE_{\alpha'}\times \bfE_{\beta'},
\end{diagram}
we have $a^*\iota_{(a^{t'+1}.\lambda)}'^*=\iota_{(a^{t'}.\lambda)}'^*a^*, a^*{f_{(a^{t'+1}.\lambda)}}_!\cong{f_{(a^{t'}.\lambda)}}_!a^*,a^*{{p_3}_{(a^{t'+1}.\lambda)}}_!\cong{{p_3}_{(a^{t'}.\lambda)}}_!a^*$ by base change, such that 
\begin{align*}
a^*Y_{t'+1}=&a^*{{p_3}_{(a^{t'+1}.\lambda)}}_!{f_{(a^{t'+1}.\lambda)}}_!\iota_{(a^{t'+1}.\lambda)}'^*\iota'^*{p_2}_\flat p_1^*(A\boxtimes B)[M](\frac{M}{2})\\
\cong&{{p_3}_{(a^{t'}.\lambda)}}_!{f_{(a^{t'}.\lambda)}}_!\iota_{(a^{t'}.\lambda)}'^*\iota'^*{p_2}_\flat p_1^*(a^*(A)\boxtimes a^*(B))[M](\frac{M}{2})\\
\cong &{{p_3}_{(a^{t'}.\lambda)}}_!{f_{(a^{t'}.\lambda)}}_!\iota_{(a^{t'}.\lambda)}'^*\iota'^*{p_2}_\flat p_1^*(A\boxtimes B)[M](\frac{M}{2})=Y_{t'},
\end{align*}
where the second isomorphism is induced by $\varphi:a^*(A)\rightarrow A$ and $ \psi:a^*(B)\rightarrow B$. Therefore, under the identification $C_s\cong \bigoplus_{0\leqslant t'\leqslant t}Y_{t'}$, the isomorphism $\phi_s$ corresponds to the isomorphism taking $Y_{t'+1}$ onto $Y_t'$ for $0\leqslant t'\leqslant t-2$ and taking $Y_0$ onto $Y_{t-1}$, and so $(C_s,\phi_s)$ is traceless.
\end{proof}

We set $$(C,\phi)=\bigoplus_{s:|\cN_s|\geqslant 2} (C_s,\phi_s),$$ then it is traceless. For any $\langle a\rangle$-orbit $\cN_s$ in $\cN$ satisfying $|\cN_s|=1$, suppose $\cN_s=\{\lambda\}$, then $\lambda\in \cN^a$ and  $\tilde{\bfF}_\lambda=\tilde{\bfF}_s,(\bfE''\times \bfE'')_\lambda=(\bfE''\times \bfE'')_s,\iota_\lambda'=\iota_s',f_\lambda=f_s,{p_3}_\lambda={p_3}_s$. By (\ref{first step}), (\ref{base isomorphism}), Lemma \ref{traceless term} and Lemma \ref{split criterion}, we obtain the following corollary.

\begin{corollary}\label{traceless term corollary}
The left hand side of the formula {\rm{(\ref{main})}} is isomorphic to
$$(C,\phi)\oplus \bigoplus_{\lambda\in \cN^a}\tilde{{{p_3}_\lambda}_!}\tilde{{f_\lambda}_!}\tilde{\iota_\lambda'^*}\tilde{\iota'^*}\tilde{{p_2}_\flat}\tilde{p_1^*}((A,\varphi)\boxtimes (B,\psi))[M](\frac{M}{2}),$$
where $(C,\phi)\in \tilde{\cD}_{\alpha',\beta'}$ is traceless.
\end{corollary}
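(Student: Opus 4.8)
The plan is to assemble the ingredients already established above and then sort by $\langle a\rangle$-orbits. By (\ref{first step}) the left hand side $L$ of (\ref{main}) is isomorphic in $\tilde{\cD}_{\alpha',\beta'}$ to $\tilde{\kappa_!}\tilde{{p_3}'_!}\tilde{\iota'^*}\tilde{{p_2}_\flat}\tilde{p_1^*}((A,\varphi)\boxtimes(B,\psi))[M](\frac{M}{2})$, whose underlying complex is $\hat{C}:=\kappa_!{p_3}'_!\iota'^*{p_2}_\flat p_1^*(A\boxtimes B)[M](\frac{M}{2})$ equipped with the periodic structure $\phi_L$ induced from $\varphi$ and $\psi$. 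Combining the decomposition (\ref{base isomorphism}) over $\cN$ with the orbitwise regrouping of Lemma \ref{weight argument} gives an isomorphism $\hat{C}\cong\bigoplus_s C_s$ in $\cD^{b,ss}_{\bfG_{\alpha'}\times\bfG_{\beta'},\tilde{m}}(\bfE_{\alpha'}\times\bfE_{\beta'})$, the sum running over the $\langle a\rangle$-orbits $\cN_s\subset\cN$, where $C_s$ is the complex underlying the object $(C_s,\phi_s)$ constructed just before Lemma \ref{traceless term}. It therefore suffices to promote this to an isomorphism $L\cong\bigoplus_s(C_s,\phi_s)$ in $\tilde{\cD}_{\alpha',\beta'}$.

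For the promotion I would rerun the argument proving Lemma \ref{weight argument}, but inside the periodic category $(\cD^{b,ss}_{\bfG_{\alpha'}\times\bfG_{\beta'},\tilde{m}}(\bfE_{\alpha'}\times\bfE_{\beta'}))^{\widetilde{}}$. The point is that all the geometry used there is $a$-stable: $a(\tilde{\bfF}_\lambda)=\tilde{\bfF}_{a.\lambda}$, hence $a(\tilde{\bfF}_s)=\tilde{\bfF}_s$; $a$ preserves dimensions of strata, so it preserves the refinement $\tilde{\bfF}_{s,\leqslant m}$; and the varieties $(\bfE''\times\bfE'')_s$ and the morphisms $\iota_s',f_s,{p_3}_s$ are $a$-stable through the cartesian squares recorded just before Lemma \ref{traceless term}. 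Consequently every recollement distinguished triangle appearing in that proof is $a$-equivariant, with natural---hence automatically periodic-category---structure morphisms, and the same holds after applying perverse cohomology to the exact sequences (\ref{long sequence}) and (\ref{short sequence}); since each such sequence splits in $\cD^{b,ss}$ by the purity argument there (via \cite[Theorem 5.4.19]{Pramod-2021}), Lemma \ref{split criterion} supplies a compatible splitting in the periodic category at every inductive step. Carrying the induction on $m$ through in this way yields $L\cong\bigoplus_s(C_s,\phi_s)$ in $\tilde{\cD}_{\alpha',\beta'}$.

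It remains to identify the summands. If $|\cN_s|\geqslant 2$ then $(C_s,\phi_s)$ is traceless by Lemma \ref{traceless term}, so $(C,\phi):=\bigoplus_{s\,:\,|\cN_s|\geqslant2}(C_s,\phi_s)$ is a traceless object of $\tilde{\cD}_{\alpha',\beta'}$. If $|\cN_s|=1$, say $\cN_s=\{\lambda\}$, then $a.\lambda=\lambda$, i.e.\ $\lambda\in\cN^a$, and conversely every $\lambda\in\cN^a$ has a singleton $\langle a\rangle$-orbit, so these orbits are indexed precisely by $\cN^a$; for such $s$ one has $\tilde{\bfF}_\lambda=\tilde{\bfF}_s$, $(\bfE''\times\bfE'')_\lambda=(\bfE''\times\bfE'')_s$, $\iota_\lambda'=\iota_s'$, $f_\lambda=f_s$, ${p_3}_\lambda={p_3}_s$, whence $(C_s,\phi_s)=\tilde{{{p_3}_\lambda}_!}\tilde{{f_\lambda}_!}\tilde{\iota_\lambda'^*}\tilde{\iota'^*}\tilde{{p_2}_\flat}\tilde{p_1^*}((A,\varphi)\boxtimes(B,\psi))[M](\frac{M}{2})$, which is the $\lambda$-indexed summand in the statement. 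Assembling the two cases gives the claimed isomorphism.

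The step I expect to be the main obstacle is the second one: passing from the $\cD^{b,ss}$-level decomposition of $\hat{C}$ to a decomposition of the pair $(\hat{C},\phi_L)$ in the periodic category. Concretely this demands verifying that the stratification $\{\tilde{\bfF}_s\}$ and its refinement by dimension are genuinely $a$-stable, that all recollement triangles and induced exact sequences of Lemma \ref{weight argument} are $a$-equivariant with natural structure maps so that the hypotheses of Lemma \ref{split criterion} are met at each inductive step, and that the periodic structure $\phi_L$ matches $\bigoplus_s\phi_s$ under the resulting splitting. Everything else---the complex-level decomposition $\hat{C}\cong\bigoplus_s C_s$, the tracelessness of the large orbits, and the identification of the singleton orbits with $\cN^a$---is a formal consequence of results already in place.
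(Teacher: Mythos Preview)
Your approach is correct and matches the paper's intent: the paper's proof is a one-line citation of (\ref{first step}), (\ref{base isomorphism}), Lemma~\ref{traceless term} and Lemma~\ref{split criterion}, and you have correctly unpacked what is implicit there, namely that the complex-level splitting must be promoted to $\tilde{\cD}_{\alpha',\beta'}$ by running the dimension-filtration/purity argument in the periodic category and invoking Lemma~\ref{split criterion} at each inductive step.

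One small redirection is worth making. You write that you would ``rerun the argument proving Lemma~\ref{weight argument}'' and you point to the filtration $\tilde{\bfF}_{s,\leqslant m}$ inside a fixed $\tilde{\bfF}_s$. But Lemma~\ref{weight argument} decomposes a single $C_s$ into its $\lambda$-pieces, which is not what is needed here (the individual $Y_\lambda$ for $\lambda\notin\cN^a$ do not even carry periodic structures). What you actually need is the same technique applied to the global dimension filtration $\tilde{\bfF}_{\leqslant m}=\bigsqcup_{\lambda:\dim\tilde{\bfF}_\lambda\leqslant m}\tilde{\bfF}_\lambda$ of all of $\tilde{\bfF}$: this filtration is $a$-stable, the recollement triangles and the purity splittings go through in $\tilde{\cC}$ exactly as you describe, and within each equidimensional piece $\tilde{\bfF}_m$ the $a$-stable orbit components $\tilde{\bfF}_s$ are open and closed, so the contribution of $\tilde{\bfF}_m$ splits directly as $\bigoplus_{s:\,\dim\tilde{\bfF}_s=m}(C_s,\phi_s)$. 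Combining over $m$ gives $L\cong\bigoplus_s(C_s,\phi_s)$, and then your last paragraph finishes the job. This is the argument the paper's citation of (\ref{base isomorphism}) and Lemma~\ref{split criterion} is pointing at; the underlying method is identical to the one you named, just aimed at the correct stratified space.
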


For any $\lambda\in \cN^a$, let $\cQ_\lambda=\bfE'^{\gamma}_{\alpha,\beta}\times_{\bfE''^{\gamma}_{\alpha,\beta}}\tilde{\bfF}_\lambda$ be the fiber product of $p_2:\bfE'^{\gamma}_{\alpha,\beta}\rightarrow \bfE''^{\gamma}_{\alpha,\beta}$ and $\iota'\iota_\lambda':\tilde{\bfF}_{\lambda}\rightarrow \bfE''^{\gamma}_{\alpha,\beta}$ with the natural morphism ${\iota_\lambda}'':\cQ_\lambda\rightarrow \bfE'^{\gamma}_{\alpha,\beta}$ and ${p_2}_\lambda':\cQ_\lambda\rightarrow \tilde{\bfF}_\lambda$, then ${p_2}_\lambda'^*\iota_\lambda'^*\iota'^*={\iota''_\lambda}^*p_2^*$. Since ${p_2}_\lambda'$ is a $\bfG_{\alpha}\times \bfG_{\beta}$-principal bundle, ${p_2}_\lambda'^*$ is an equivalence of categories with the quasi-inverse denoted by ${{p_2}_\lambda'}_\flat$, then $\iota_\lambda'^*\iota'^*{p_2}_\flat\cong {{p_2}_\lambda'}_\flat{\iota''_\lambda}^*$. There is an automorphism $a:\cQ_\lambda\rightarrow \cQ_\lambda$ induced by the automorphisms $a$ on $\bfE'^{\gamma}_{\alpha,\beta}, \bfE''^{\gamma}_{\alpha,\beta}, \tilde{\bfF}_\lambda$ such that ${p_2}_\lambda' a=a{p_2}_\lambda', \iota''_\lambda a=a\iota''_\lambda$ and $a^*{p_2}_\lambda'^*={p_2}_\lambda'^*a^*,a^*{{p_2}_\lambda'}_\flat\cong {{p_2}_\lambda'}_\flat a^*,a^*{\iota_\lambda''}^*={\iota_\lambda''}^*a^*$, and there are functors $\tilde{{{p_2}_\lambda'}_\flat},\tilde{{\iota_\lambda''}^*}$ such that $\tilde{\iota_\lambda'^*}\tilde{\iota'^*}\tilde{{p_2}_\flat}\cong \tilde{{{p_2}_\lambda'}_\flat}\tilde{{\iota_\lambda''}^*}$. By Corollary \ref{traceless term corollary}, we obtain the following proposition to end this subsection.

\begin{proposition}\label{left hand side result}
The left hand side of the formula {\rm{(\ref{main})}} is isomorphic to
$$(C,\phi)\oplus \bigoplus_{\lambda\in \cN^a}\tilde{{{p_3}_\lambda}_!}\tilde{{f_\lambda}_!}\tilde{{{p_2}_\lambda'}_\flat}\tilde{{\iota_\lambda''}^*}\tilde{p_1^*}((A,\varphi)\boxtimes (B,\psi))[M](\frac{M}{2}),$$
where $(C,\phi)\in \tilde{\cD}_{\alpha',\beta'}$ is traceless.
\end{proposition}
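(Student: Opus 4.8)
The plan is to upgrade Corollary \ref{traceless term corollary} by replacing, for each $\lambda\in\cN^a$, the composite $\tilde{\iota_\lambda'^*}\tilde{\iota'^*}\tilde{{p_2}_\flat}$ appearing there with $\tilde{{{p_2}_\lambda'}_\flat}\tilde{{\iota_\lambda''}^*}$. All that is needed is a natural isomorphism between these two functors on the relevant tilde-categories, and this is produced by a base-change argument along the cartesian square defining $\cQ_\lambda$. Concretely, I would first form the fiber product $\cQ_\lambda=\bfE'^{\gamma}_{\alpha,\beta}\times_{\bfE''^{\gamma}_{\alpha,\beta}}\tilde{\bfF}_\lambda$ of $p_2$ and $\iota'\iota_\lambda'$, with its two projections ${\iota_\lambda}''\colon\cQ_\lambda\rightarrow\bfE'^{\gamma}_{\alpha,\beta}$ and ${p_2}_\lambda'\colon\cQ_\lambda\rightarrow\tilde{\bfF}_\lambda$; cartesianness gives the identity ${p_2}_\lambda'^*\iota_\lambda'^*\iota'^*={\iota_\lambda''}^*p_2^*$ of functors.

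Next I would observe that ${p_2}_\lambda'$, being a base change of the $\bfG_\alpha\times\bfG_\beta$-principal bundle $p_2$, is itself a $\bfG_\alpha\times\bfG_\beta$-principal bundle, so ${p_2}_\lambda'^*$ is an equivalence; writing ${{p_2}_\lambda'}_\flat$ for its quasi-inverse and combining with the previous identity yields $\iota_\lambda'^*\iota'^*{p_2}_\flat\cong{{p_2}_\lambda'}_\flat{\iota_\lambda''}^*$ on the underlying categories of mixed semisimple complexes. Then I would check that the automorphism $a$ lifts to $\cQ_\lambda$ compatibly with $p_2$, $\iota'\iota_\lambda'$ and their counterparts, so that ${p_2}_\lambda'^*$, ${{p_2}_\lambda'}_\flat$ and ${\iota_\lambda''}^*$ all commute with $a^*$ (the first two via the principal-bundle structure, the last strictly). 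By the formalism of subsection \ref{periodic functor} this promotes the displayed isomorphism to an isomorphism $\tilde{\iota_\lambda'^*}\tilde{\iota'^*}\tilde{{p_2}_\flat}\cong\tilde{{{p_2}_\lambda'}_\flat}\tilde{{\iota_\lambda''}^*}$ of functors between the tilde-categories.

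Finally, I would apply this isomorphism of functors to the object $\tilde{p_1^*}((A,\varphi)\boxtimes(B,\psi))[M](\frac{M}{2})$ and then apply $\tilde{{{p_3}_\lambda}_!}\tilde{{f_\lambda}_!}$, for each $\lambda\in\cN^a$; substituting into the expression furnished by Corollary \ref{traceless term corollary} gives exactly the claimed formula, with the same traceless $(C,\phi)\in\tilde{\cD}_{\alpha',\beta'}$ unchanged. The one point that requires care — and the only genuine obstacle — is the bookkeeping of the periodic-functor and Weil structures: one must verify that the base-change isomorphisms are compatible with the $a^*$-equivariant structures induced by $\varphi$ and $\psi$, and that $\iota_\lambda'^*\iota'^*{p_2}_\flat\cong{{p_2}_\lambda'}_\flat{\iota_\lambda''}^*$ is natural enough to pass through $\tilde{{{p_3}_\lambda}_!}\tilde{{f_\lambda}_!}$. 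This is the same kind of cartesian-square argument already used repeatedly in this subsection, so no new ideas are required; I would simply spell out the compatibilities.
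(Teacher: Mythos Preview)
Your proposal is correct and follows essentially the same approach as the paper: form the fiber product $\cQ_\lambda$ along $p_2$ and $\iota'\iota_\lambda'$, use that ${p_2}_\lambda'$ is a $\bfG_\alpha\times\bfG_\beta$-principal bundle to obtain $\iota_\lambda'^*\iota'^*{p_2}_\flat\cong{{p_2}_\lambda'}_\flat{\iota_\lambda''}^*$, check $a$-compatibility to promote this to the tilde-level, and then substitute into Corollary~\ref{traceless term corollary}. The paper's argument is exactly this, with no additional ingredients.
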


\subsubsection{The right hand side}\

We draw the following commutative diagram containing all data we will use in this subsection. The details will be introduced later.

\begin{diagram}[midshaft,size=2em]
\bfE_\alpha\times \bfE_\beta\\
\uTo^{\iota_\lambda}\\
(\bfF\times \bfF)_\lambda &\lTo{{p_1}_\lambda'} &\cO_\lambda\\
\dTo^{\kappa_\lambda}\\
\bfE_{\alpha_1}\times \bfE_{\alpha_2}\times \bfE_{\beta_1}\times \bfE_{\beta_2} &\square &\dTo_{{{\kappa}_\lambda}'}\\
\dTo^{\tau_\lambda}\\
\bfE_{\alpha_1}\times \bfE_{\beta_1}\times \bfE_{\alpha_2}\times \bfE_{\beta_2} &\lTo^{{p_1}_\lambda} &(\bfE'\times \bfE')_\lambda &\rTo^{{p_2}_\lambda} &(\bfE''\times \bfE'')_\lambda &\rTo^{{p_3}_\lambda} &\bfE_{\alpha'}\times \bfE_{\beta'}
\end{diagram}

For any $\lambda=(\alpha_1,\alpha_2,\beta_1,\beta_2)\in \cN^a$, we denote by 
\begin{align*}
&(\bfF\times \bfF)_\lambda=\bfF^\alpha_{\alpha_1,\alpha_2}\times \bfF^\beta_{\beta_1,\beta_2},\ (\bfE''\times \bfE'')_\lambda=\bfE'^{\alpha'}_{\alpha_1,\beta_1}\times \bfE'^{\beta'}_{\alpha_2,\beta_2}\\
&\iota_\lambda=(\iota\times \iota):(\bfF\times \bfF)_\lambda\rightarrow \bfE_\alpha\times \bfE_\beta,\\&\kappa_\lambda=(\kappa\times \kappa):(\bfF\times \bfF)_\lambda\rightarrow\bfE_{\alpha_1}\times \bfE_{\alpha_2}\times \bfE_{\beta_1}\times \bfE_{\beta_2},\\
&{p_1}_\lambda=(p_1\times p_1):(\bfE'\times \bfE')_\lambda\rightarrow \bfE_{\alpha_1}\times \bfE_{\beta_1}\times \bfE_{\alpha_2}\times \bfE_{\beta_2}, \\
&{p_2}_\lambda=(p_2\times p_2):(\bfE'\times \bfE')_\lambda\rightarrow(\bfE''\times \bfE'')_\lambda.
\end{align*}

By definition, the right hand side of the formula (\ref{main}) is equal to 
$$(C,\phi)\oplus\bigoplus_{\lambda\in \cN^a}\tilde{{{p_3}_\lambda}_!}\tilde{{{p_2}_\lambda}_\flat}\tilde{{p_1}_\lambda^*}\tilde{{\tau_\lambda}_!}\tilde{{\kappa_\lambda}_!}\tilde{\iota_\lambda^*}((A,\varphi)\boxtimes (B,\psi))[N_\lambda-(\alpha_2,\beta_1)](\frac{N_\lambda-(\alpha_2,\beta_1)}{2}),$$
where $$N_\lambda=-\langle \alpha_1,\alpha_2\rangle-\langle \beta_1,\beta_2\rangle+\sum_{i\in I}(\alpha_{1i}\beta_{1i}+\alpha_{2i}\beta_{2i})+\sum_{h\in H}(\alpha_{1s(h)}\beta_{1t(h)}+\alpha_{2s(h)}\beta_{2t(h)}).$$

Let $\cO_\lambda=(\bfF\times \bfF)_\lambda\times_{\bfE_{\alpha_1}\times \bfE_{\beta_1}\times \bfE_{\alpha_2}\times \bfE_{\beta_2}}(\bfE'\times \bfE')_\lambda$ be the fiber product of $\tau_\lambda\kappa_\lambda:(\bfF\times \bfF)_\lambda\rightarrow \bfE_{\alpha_1}\times \bfE_{\beta_1}\times \bfE_{\alpha_2}\times \bfE_{\beta_2}$ and ${p_1}_\lambda:(\bfE'\times \bfE')_\lambda\rightarrow \bfE_{\alpha_1}\times \bfE_{\beta_1}\times \bfE_{\alpha_2}\times \bfE_{\beta_2}$ with the natural morphism ${p_1}_\lambda':\cO_\lambda\rightarrow (\bfF\times \bfF)_\lambda$ and ${\kappa_\lambda}':\cO_\lambda\rightarrow (\bfE'\times \bfE')_\lambda$, then ${p_1}_\lambda^*{\tau_\lambda}_!{\kappa_\lambda}_!\cong {{\kappa}_\lambda}'_!{p_1}_\lambda'^*$. There is an automorphism $a:\cO_\lambda\rightarrow \cO_\lambda$ induced by the automorphisms $a$ on $(\bfF\times \bfF)_\lambda, \bfE_{\alpha_1}\times \bfE_{\beta_1}\times \bfE_{\alpha_2}\times \bfE_{\beta_2},(\bfE'\times \bfE')_\lambda$ such that ${p_1}_\lambda' a=a{p_1}_\lambda', a^*{p_1}_\lambda'^*={p_1}_\lambda'^*a^*$ and there is a cartesian diagram 
\begin{diagram}[midshaft,size=2em]
\cO_\lambda &\rTo^{{\kappa_\lambda}'} &(\bfE'\times \bfE')_\lambda\\
\dTo^a &\square &\dTo_a\\
\cO_\lambda &\rTo^{{\kappa_\lambda}'} &(\bfE'\times \bfE')_\lambda.
\end{diagram}
By base change, we have $a^*{\kappa_\lambda}'_!\cong {\kappa_\lambda}'_!a^*$ and the functors $\tilde{{\kappa_\lambda}'_!},\tilde{{p_1}_\lambda'^*}$ such that $\tilde{{p_1}_\lambda^*}\tilde{{\tau_\lambda}_!}\tilde{{\kappa_\lambda}_!}\cong \tilde{{{\kappa}_\lambda}'_!}\tilde{{p_1}_\lambda'^*}$, and so we obtain the following proposition to end this subsection.

\begin{proposition}\label{right hand side result}
The right hand side of the formula {\rm{(\ref{main})}} is isomorphic to
$$(C,\phi)\oplus\bigoplus_{\lambda\in \cN^a}\tilde{{{p_3}_\lambda}_!}\tilde{{{p_2}_\lambda}_\flat}\tilde{{{\kappa}_\lambda}'_!}\tilde{{p_1}_\lambda'^*}\tilde{\iota_\lambda^*}((A,\varphi)\boxtimes (B,\psi))[N_\lambda-(\alpha_2,\beta_1)](\frac{N_\lambda-(\alpha_2,\beta_1)}{2})$$
\end{proposition}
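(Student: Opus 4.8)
The plan is to read the stated expression off formally from the definitions of the induction and restriction functors together with one base-change identity, in exact parallel with the computation of the left hand side (Proposition \ref{left hand side result}). Throughout, every shift $[m]$ comes with the Tate twist $(\tfrac m2)$, exactly as in the definitions of $\Ind$ and $\Res$, so I track only the shifts.

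First I would unfold the five functors in the $\lambda$-summand of the right hand side of \eqref{main}. With $\iota_\lambda=\iota\times\iota$, $\kappa_\lambda=\kappa\times\kappa$ and ${p_i}_\lambda=p_i\times p_i$ as in the displayed diagram, the box product of restrictions is $\tilde{{\kappa_\lambda}_!}\tilde{\iota_\lambda^*}((A,\varphi)\boxtimes(B,\psi))$ shifted by $-\langle\alpha_1,\alpha_2\rangle-\langle\beta_1,\beta_2\rangle$, applying $\tilde{{\tau_\lambda}_!}$ adds no shift, and the box product of inductions is $\tilde{{{p_3}_\lambda}_!}\tilde{{{p_2}_\lambda}_\flat}\tilde{{p_1}_\lambda^*}$ shifted by $\sum_{i\in I}(\alpha_{1i}\beta_{1i}+\alpha_{2i}\beta_{2i})+\sum_{h\in H}(\alpha_{1s(h)}\beta_{1t(h)}+\alpha_{2s(h)}\beta_{2t(h)})$, using $d_1-d_2=\sum_i\nu'_i\nu''_i+\sum_h\nu'_{s(h)}\nu''_{t(h)}$ for each of the two decompositions. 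These contributions add up to precisely $N_\lambda$, so together with the $[-(\alpha_2,\beta_1)]$ already present in \eqref{main} the $\lambda$-summand equals $\tilde{{{p_3}_\lambda}_!}\tilde{{{p_2}_\lambda}_\flat}\tilde{{p_1}_\lambda^*}\tilde{{\tau_\lambda}_!}\tilde{{\kappa_\lambda}_!}\tilde{\iota_\lambda^*}((A,\varphi)\boxtimes(B,\psi))$ shifted by $N_\lambda-(\alpha_2,\beta_1)$. I would verify this numerical identity term by term, since it is the one place a sign could slip.

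Next I would introduce the fibre product $\cO_\lambda=(\bfF\times\bfF)_\lambda\times_{\bfE_{\alpha_1}\times\bfE_{\beta_1}\times\bfE_{\alpha_2}\times\bfE_{\beta_2}}(\bfE'\times\bfE')_\lambda$ of $\tau_\lambda\kappa_\lambda$ and ${p_1}_\lambda$, with projections ${p_1}_\lambda':\cO_\lambda\to(\bfF\times\bfF)_\lambda$ and ${\kappa_\lambda}':\cO_\lambda\to(\bfE'\times\bfE')_\lambda$; base change along the resulting cartesian square gives ${p_1}_\lambda^*{\tau_\lambda}_!{\kappa_\lambda}_!\cong{{\kappa}_\lambda}'_!{p_1}_\lambda'^*$. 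To lift this to the tilde-categories I would observe that, since $\lambda\in\cN^a$, the automorphisms $a$ on $(\bfF\times\bfF)_\lambda$, on $\bfE_{\alpha_1}\times\bfE_{\beta_1}\times\bfE_{\alpha_2}\times\bfE_{\beta_2}$ and on $(\bfE'\times\bfE')_\lambda$ are compatible with $\tau_\lambda$, $\kappa_\lambda$ and ${p_1}_\lambda$, so $\cO_\lambda$ inherits an automorphism $a$ with ${p_1}_\lambda'a=a{p_1}_\lambda'$, $a^*{p_1}_\lambda'^*={p_1}_\lambda'^*a^*$, and with ${\kappa_\lambda}'$ sitting in a cartesian square over $a$; base change then gives $a^*{\kappa_\lambda}'_!\cong{\kappa_\lambda}'_!a^*$. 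Hence $\tilde{{\kappa_\lambda}'_!}$ and $\tilde{{p_1}_\lambda'^*}$ are defined, and since the base-change isomorphism is canonical it is compatible with the transported $a^*$-structures, so by the construction recalled after Lemma \ref{split criterion} it induces $\tilde{{p_1}_\lambda^*}\tilde{{\tau_\lambda}_!}\tilde{{\kappa_\lambda}_!}\cong\tilde{{{\kappa}_\lambda}'_!}\tilde{{p_1}_\lambda'^*}$. Substituting this into the middle of the $\lambda$-summand, leaving $\tilde{{{p_3}_\lambda}_!}\tilde{{{p_2}_\lambda}_\flat}$ on the left and $\tilde{\iota_\lambda^*}$ and the shift untouched, and summing over $\lambda\in\cN^a$ with the traceless term $(C,\phi)$ carried along unchanged, gives the claimed expression, which is now in a form directly comparable with Proposition \ref{left hand side result}. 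I expect no serious obstacle: base change and the passage from $a$-equivariant cartesian squares to tilde-functors are exactly the formalism already used in computing the left hand side, so the only step genuinely demanding care is the shift/twist bookkeeping of the previous paragraph.
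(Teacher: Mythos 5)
Your proposal is correct and follows essentially the same route as the paper: unfold the definitions of $\Ind$ and $\Res$ to collect the shifts into $N_\lambda-(\alpha_2,\beta_1)$, then form the fiber product $\cO_\lambda$ of $\tau_\lambda\kappa_\lambda$ and ${p_1}_\lambda$, apply proper base change to get ${p_1}_\lambda^*{\tau_\lambda}_!{\kappa_\lambda}_!\cong{\kappa_\lambda}'_!{p_1}_\lambda'^*$, and lift this to the tilde-categories using the $a$-equivariance of the cartesian square. The only difference is presentational — you spell out the shift bookkeeping that the paper leaves as ``by definition.''
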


\subsubsection{Connection between two sides}\

We draw the following commutative diagram containing all data we will use in this subsection. The details will be introduced later.

\begin{diagram}[midshaft,size=2em]
\bfE_{\alpha}\times \bfE_{\beta} && &\lTo^{p_1} &\bfE'^{\gamma}_{\alpha,\beta}\\
&&&&\uTo^{\iota_\lambda''}\\
\uTo^{\iota_\lambda}&&&&\cQ_\lambda &\rTo^{{p_2}_\lambda'} &\tilde{\bfF}_\lambda\\
&&&&\uTo^{\xi_\lambda} & &\vEq\\
(\bfF\times\bfF)_\lambda &\lTo^{{p_1}_\lambda'} &\cO_\lambda &\lTo^{\zeta_\lambda} &\cP_\lambda &\rTo^{{p_2}_\lambda''} &\tilde{\bfF}_\lambda\\
&&\dTo^{\kappa_\lambda'} & &\dTo^{f_\lambda'} &\square &\dTo_{f_\lambda}\\
&&(\bfE'\times \bfE')_\lambda &\hEq &(\bfE'\times \bfE')_\lambda &\rTo^{{p_2}_\lambda} &(\bfE''\times \bfE'')_\lambda
\end{diagram}

By Proposition \ref{left hand side result} and \ref{right hand side result}, we need to prove
\begin{align*}
&\tilde{{f_\lambda}_!}\tilde{{{p_2}_\lambda'}_\flat}\tilde{{\iota_\lambda''}^*}\tilde{p_1^*}((A,\varphi)\boxtimes (B,\psi))[M](\frac{M}{2})\\
\cong &\tilde{{{p_2}_\lambda}_\flat}\tilde{{{\kappa}_\lambda}'_!}\tilde{{p_1}_\lambda'^*}\tilde{\iota_\lambda^*}((A,\varphi)\boxtimes (B,\psi))[N_\lambda-(\alpha_2,\beta_1)](\frac{N_\lambda-(\alpha_2,\beta_1)}{2})
\end{align*}
for any $\lambda\in \cN^a$.

Let $\cP_\lambda=\tilde{\bfF}_\lambda\times_{(\bfE''\times\bfE'')_\lambda}(\bfE'\times\bfE')_\lambda$ be the fiber product of $f_\lambda:\tilde{\bfF}_\lambda\rightarrow (\bfE''\times\bfE'')_\lambda$ and ${p_2}_\lambda:(\bfE'\times\bfE')_\lambda\rightarrow (\bfE''\times\bfE'')_\lambda$ with the natural morphisms $f_\lambda':\cP_\lambda\rightarrow (\bfE'\times\bfE')_\lambda$ and ${p_2}_\lambda'':\cP_\lambda\rightarrow \tilde{\bfF}_\lambda$, then ${p_2}_\lambda^*{f_\lambda}_!\cong {f_\lambda'}_!{p_2}_\lambda''^*$ by base change. Since ${p_2}_\lambda''$ is a $\bfG_{\alpha_1}\times \bfG_{\beta_1}\times \bfG_{\alpha_2}\times\bfG_{\beta_2}$-bundle, ${p_2}_\lambda''^*$ is an equivalence of categories with the quasi-inverse denoted by ${{p_2}_\lambda''}_\flat$, then ${f_\lambda}_!{{p_2}_\lambda''}_\flat\cong {{p_2}_\lambda}_\flat {f_{\lambda'}}_!$. There is an automorphism $a:\cP_\lambda\rightarrow \cP_\lambda$ induced by the automorphisms $a$ on $\tilde{\bfF}_\lambda, (\bfE''\times\bfE'')_\lambda, (\bfE'\times\bfE')_\lambda$ such that ${p_2}_\lambda''a=a{p_2}_\lambda'', a^*{p_2}_\lambda''^*={p_2}_\lambda''^*a^*,a^*{{p_2}_\lambda''}_\flat\cong {p_2}_\lambda''a^*$ and there is a cartesian diagram
\begin{diagram}[midshaft,size=2em]
\cP_\lambda &\rTo^{f_\lambda'} &(\bfE'\times\bfE')_\lambda\\
\dTo^a &\square &\dTo_a\\
\cP_\lambda &\rTo^{f_\lambda'} &(\bfE'\times\bfE')_\lambda.
\end{diagram}
By base change, we have $a^*{f_\lambda'}_!\cong {f_\lambda'}_!a^*$ and the functors $\tilde{{f_\lambda'}_!},\tilde{{{p_2}_\lambda''}_\flat}$ such that $\tilde{{f_\lambda}_!}\tilde{{{p_2}_\lambda''}_\flat}\cong \tilde{{{p_2}_\lambda}_\flat}\tilde{{f_\lambda'}_!}$.

By \cite[Lemma 3.12]{fang2023parity}, there is a smooth morphism $\zeta_\lambda:\cP_\lambda\rightarrow \cO_\lambda$ with connected fibers of dimension 
$$r'_\lambda=r_\lambda-\sum_{h\in H}(\alpha_{1s(h)}\alpha_{2t(h)}+\beta_{1s(h)}\beta_{2t(h)})$$
such that $f_\lambda'=\kappa_\lambda'\zeta_\lambda$. Then $\zeta_\lambda a=a\zeta_\lambda, a^*\zeta_\lambda^*=\zeta_\lambda^*a^*$ and there is a cartesian diagram
\begin{diagram}[midshaft,size=2em]
\cP_\lambda &\rTo^{\zeta_\lambda} &\cO_\lambda\\
\dTo^a &\square &\dTo_a\\
\cP_\lambda &\rTo^{\zeta_\lambda} &\cO_\lambda.
\end{diagram}
By base change, we have $a^*{\zeta_\lambda}_!\cong{\zeta_\lambda}_!a^*$ and the functors $\tilde{{\zeta_\lambda}_!}, \tilde{\zeta_\lambda^*}$ such that $\tilde{{f_\lambda'}_!}=\tilde{{\kappa_\lambda'}_!}\tilde{{\zeta_\lambda}_!}$. Since $\zeta_\lambda$ is smooth with connected fibers, by \cite[Proposition 3.6.1, Theorem 3.6.6]{Pramod-2021}, $\zeta_\lambda^!=\zeta_\lambda^*[2r'_\lambda](r'_\lambda)$ is fully faithful such that
$${\zeta_\lambda}_!\zeta_\lambda^*\cong{\zeta_\lambda}_!\zeta_\lambda^![-2r'_\lambda](-r'_\lambda)\cong[-2r'_\lambda](-r'_\lambda)$$
and so $\tilde{{\zeta_\lambda}_!}\tilde{\zeta_\lambda^*}\cong [-2r'_\lambda](-r'_\lambda)$.

By Lemma \cite[Lemma 3.13]{fang2023parity}, there is a morphism $\xi_\lambda:\cP_\lambda\rightarrow \cQ_\lambda$ such that $p_1\iota_\lambda''\xi_\lambda=\iota_\lambda {p_1}_\lambda' \zeta_\lambda$ and ${p_2}_\lambda'\xi_\lambda={p_2}_\lambda''$. Then $\xi_\lambda a=a\xi_\lambda, a^*\xi_\lambda^*=\xi_\lambda^*a^*$, and we have the functor $\tilde{\xi_\lambda^*}$ such that $\tilde{\xi_\lambda^*}\tilde{\iota_\lambda''^*}\tilde{p_1^*}=\tilde{\zeta_\lambda^*}\tilde{{p_1}_\lambda'^*}\tilde{\iota_\lambda^*}$ and $\tilde{\xi_\lambda^*}\tilde{{p_2}_\lambda'^*}=\tilde{{p_2}_\lambda''^*}, \tilde{{{p_2}_\lambda'}_\flat}\cong\tilde{{{p_2}_\lambda''}_\flat}\tilde{\xi_\lambda^*}.$

\begin{proof}[Proof of the Theorem \ref{main theorem}]
By Proposition \ref{left hand side result} and \ref{right hand side result}, we need to prove that
\begin{align*}
&\tilde{{f_\lambda}_!}\tilde{{{p_2}_\lambda'}_\flat}\tilde{{\iota_\lambda''}^*}\tilde{p_1^*}((A,\varphi)\boxtimes (B,\psi))[M](\frac{M}{2})\\
\cong &\tilde{{{p_2}_\lambda}_\flat}\tilde{{{\kappa}_\lambda}'_!}\tilde{{p_1}_\lambda'^*}\tilde{\iota_\lambda^*}((A,\varphi)\boxtimes (B,\psi))[N_\lambda-(\alpha_2,\beta_1)](\frac{N_\lambda-(\alpha_2,\beta_1)}{2})
\end{align*}
for any $\lambda\in \cN^a$. Indeed, we have
\begin{align*}
&\tilde{{f_\lambda}_!}\tilde{{{p_2}_\lambda'}_\flat}\tilde{{\iota_\lambda''}^*}\tilde{p_1^*}((A,\varphi)\boxtimes (B,\psi))[M](\frac{M}{2})\\
\cong&\tilde{{f_\lambda}_!}\tilde{{{p_2}_\lambda''}_\flat}\tilde{\xi_\lambda^*}\tilde{{\iota_\lambda''}^*}\tilde{p_1^*}((A,\varphi)\boxtimes (B,\psi))[M](\frac{M}{2})\\
\cong&\tilde{{{p_2}_\lambda}_\flat}\tilde{{f_\lambda'}_!}\tilde{\xi_\lambda^*}\tilde{{\iota_\lambda''}^*}\tilde{p_1^*}((A,\varphi)\boxtimes (B,\psi))[M](\frac{M}{2})\\
\cong&\tilde{{{p_2}_\lambda}_\flat}\tilde{{\kappa_\lambda'}_!}\tilde{{\zeta_\lambda}_!}\tilde{\xi_\lambda^*}\tilde{{\iota_\lambda''}^*}\tilde{p_1^*}((A,\varphi)\boxtimes (B,\psi))[M](\frac{M}{2})\\
=&\tilde{{{p_2}_\lambda}_\flat}\tilde{{\kappa_\lambda'}_!}\tilde{{\zeta_\lambda}_!}\tilde{\zeta_\lambda^*}\tilde{{p_1}_\lambda'^*}\tilde{\iota_\lambda^*}((A,\varphi)\boxtimes (B,\psi))[M](\frac{M}{2})\\
\cong &\tilde{{{p_2}_\lambda}_\flat}\tilde{{{\kappa}_\lambda}'_!}\tilde{{p_1}_\lambda'^*}\tilde{\iota_\lambda^*}((A,\varphi)\boxtimes (B,\psi))[M-2r'_\lambda](\frac{M-2r'_\lambda}{2}).
\end{align*}
It is routine to check that $M-2r'_\lambda=N_\lambda-(\alpha_2,\beta_1)$, see \cite[Subsection 3.4]{{fang2023parity}}. 
\end{proof}
   
\subsection{Application}\

In this subsection, we take the fixed square root $v_q$ of $q$ appearing in section \ref{Hall algebra} and subsection \ref{Hall algebra via function} to be $-\sqrt{q}$.

By Theorem \ref{sheaf-function correspondence}, for any $\nu,\nu',\nu''\in \bbN I^a$ satisfying $\nu=\nu'+\nu''$, we have 
\begin{align*}
&\chi_{\Ind^\nu_{\nu',\nu''}((A,\varphi)\boxtimes(B,\psi))}=\ind^{\nu}_{\nu',\nu''}(\chi_{(A,\varphi)}\otimes \chi_{(B,\psi)}),\\
&\chi_{\Res^\nu_{\nu',\nu''}(C,\phi)}=\res^\nu_{\nu',\nu''}(\chi_{(C,\phi)})
\end{align*}
for any $(A,\varphi)\in \tilde{\cD}_{\nu'},(B,\psi)\in \tilde{\cD}_{\nu''},(C,\phi)\in \tilde{\cD}_\nu$.

By Theorem \ref{main theorem} and Lemma \ref{chitraceless}, for any $\alpha,\beta\in \bbN I^a$, we have 
\begin{align*}
\sum_{\alpha',\beta':\alpha'+\beta'=\alpha+\beta}\res^{\alpha+\beta}_{\alpha',\beta'}\ind^{\alpha+\beta}_{\alpha,\beta}(\chi_{(A,\varphi)}\otimes\chi_{(B,\psi)})
=\sum_{\alpha',\beta':\alpha'+\beta'=\alpha+\beta}\sum_{\lambda=(\alpha_1,\alpha_2,\beta_1,\beta_2)\in \cN^a}\\v_q^{(\alpha_2,\beta_1)}(\ind^{\alpha'}_{\alpha_1,\beta_1}\otimes \ind^{\beta'}_{\alpha_2,\beta_2}){\tau_{\lambda}}_!(\res^\alpha_{\alpha_1,\alpha_2}(\chi_{(A,\varphi)})\otimes \res^\beta_{\beta_1,\beta_2}(\chi_{(B,\psi)}))
\end{align*}
for any $(A,\varphi)\in \tilde{\cD}_{\alpha},(B,\psi)\in \tilde{\cD}_{\beta}$, that is, 
\begin{equation}\label{tilde(Delta)alghom}
\tilde{\Delta}(\chi_{(A,\varphi)}\,\,\tilde{*}\,\,\chi_{(B,\psi)})=\tilde{\Delta}(\chi_{(A,\varphi)})\,\,\tilde{*}\,\,\tilde{\Delta}(\chi_{(B,\psi)}),
\end{equation}
where $\tilde{*}$ is the multiplication and $\tilde{\Delta}$ is the comultiplication on $\tilde{\cH}(\cA)$ for $\cA=\rep_k^{F_{Q,a}}(Q)$, see subsection \ref{Hall algebra via function}, and the multiplication on $\tilde{\cH}(\cA)\otimes \tilde{\cH}(\cA)$ is defined by 
$$(1_{\cO_{M_1}}\otimes 1_{\cO_{M_2}})*(1_{\cO_{N_1}}\otimes 1_{\cO_{N_2}})={v_q}^{(\hat{M_2},\hat{N_1})}(1_{\cO_{M_1}}*1_{\cO_{N_1}})\otimes (1_{\cO_{M_2}}*1_{\cO_{N_2}}).$$

\begin{lemma}
For any $\nu\in \bbN I^a$ and $\bfG^F_\nu$-orbit $\cO=\bfG^F_\nu.x$ in $\bfE^F_\nu$, there exists finitely many $(A_s,\varphi_s)\in \tilde{\cD}_{\nu}$ for $s=1,...,m$ such that the characteristic function $1_\cO$ can be written as a $\bbC$-linear combination of $\chi_{(A_s,\varphi_s)}$ for $s=1,...,m$.
\end{lemma}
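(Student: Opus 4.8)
The plan is to establish the (equivalent) stronger statement that for each $\nu\in\bbN I^a$ the trace functions $\chi_{(A,\varphi)}$, $(A,\varphi)\in\tilde\cD_\nu$, span the whole space $\tilde\cH_{\bfG_\nu^F}(\bfE_\nu^F)$; write $V$ for their span. Recall from subsection~\ref{Hall algebra via function} that $\{1_\cO\}$, with $\cO$ running over the $\bfG_\nu^F$-orbits of $\bfE_\nu^F$, is a basis of $\tilde\cH_{\bfG_\nu^F}(\bfE_\nu^F)$, and that by Lemma~\ref{bijection between orbits and isomorphism classes} (together with connectedness of $\bfG_\nu$) these orbits are exactly the sets $\cO^F$ for $F$-stable $\bfG_\nu$-orbits $\cO$ in $\bfE_\nu$. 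Since $aF=Fa$, the automorphism $a$ of $\bfE_\nu$ permutes the $F$-stable orbits, and as $a^n=1$ it organizes them into $\langle a\rangle$-orbits, all members of a given $\langle a\rangle$-orbit having the same dimension. I will prove that $1_{\cO^F}\in V$ for every $F$-stable $\cO$ by induction on $d:=\dim\cO$; the base case $d=0$ (the closed orbit $\{0\}$, which is $a$-stable) is the case $t=1$ of the following.

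For the inductive step fix an $F$-stable orbit $\cO$ of dimension $d$ whose $\langle a\rangle$-orbit is $\{\cO,a\cO,\dots,a^{t-1}\cO\}$ ($t\mid n$), all of them $F$-stable of dimension $d$. The $\bfG_\nu$-stabilizers on $\bfE_\nu$ are automorphism groups of representations, hence connected, so the $\bfG_\nu$-equivariant simple perverse sheaves on $\bfE_\nu$ are precisely the $\mathrm{IC}(\overline{\cO'})$; set $A:=\bigoplus_{j=0}^{t-1}\mathrm{IC}(\overline{a^j\cO})$. From $F\cO=\cO$ and $F=\tilde Fa$ one gets $\tilde F^{-1}(\cO)=a\cO$, so $\tilde F^*$ and $a^*$ cyclically permute the pairwise non-isomorphic summands of $A$. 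As those summands are simple, an $\tilde F$-Weil structure $\xi\colon\tilde F^*A\xrightarrow{\sim}A$ is forced to be ``diagonal'' and corresponds to a tuple $(\sigma_j)_j$ of nonzero scalars (such $\xi$ exist), and for a suitable nonzero value $\mu_0$ (fixed by the normalization of the $\mathrm{IC}$'s) the object $(A,\xi)$ lies in $\cD^{b,ss}_{\bfG_\nu,\tilde m}(\bfE_\nu)$ whenever $\prod_j\sigma_j=\mu_0$. Likewise, an isomorphism $\varphi\colon a^*A\xrightarrow{\sim}A$ meeting the periodicity condition corresponds to a tuple $(p_j)_j$ of nonzero scalars, such $\varphi$ exist, and we fix one; for each such $\xi,\varphi$ the pair $(A,\varphi)$ lies in $\tilde\cD_\nu$, because $F$ fixes every $a^j\cO$, so $\varphi a^*(\xi)\colon F^*A\to A$ is diagonal with a nonzero scalar on each $\mathrm{IC}$-summand and $A$ with this structure is mixed semisimple for $F$. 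Moreover $(A,\varphi)$ is \emph{not} traceless in the sense of Lemma~\ref{chitraceless}: a traceless object is of the form $B\oplus a^*B\oplus\cdots$ with $B\in\cD^{b,ss}_{\bfG_\nu,\tilde m}(\bfE_\nu)$, but $\tilde F^*$ permutes the summands of $A$ transitively, so no proper direct summand of $A$ carries an $\tilde F$-Weil structure; hence $\chi_{(A,\varphi)}$ need not vanish.

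Now $\chi_{(A,\varphi)}$ is supported on $\big(\bigcup_j\overline{a^j\cO}\big)^F$, and for $y$ lying in the dense orbit $a^j\cO$ of $\overline{a^j\cO}$ — which is not contained in $\overline{a^{j'}\cO}$ for $j'\ne j$, these orbits being of equal dimension — the stalk $A_y=\mathrm{IC}(\overline{a^j\cO})_y$ is $\overline{\bbQ}_l$ in a single cohomological degree, on which $\varphi a^*(\xi)$ acts by a scalar $\psi_j$, with $\prod_j\psi_j=\mu_0\prod_j p_j=:c_0\ne0$. Therefore
\[
\chi_{(A,\varphi)}=(-1)^{d}\sum_{j=0}^{t-1}\psi_j\,1_{(a^j\cO)^F}+g,\qquad g\in\mathrm{span}\{\,1_{Y^F}: Y\ F\text{-stable orbit},\ \dim Y<d\,\},
\]
and $g\in V$ by the induction hypothesis, so $(-1)^{d}\sum_j\psi_j\,1_{(a^j\cO)^F}=\chi_{(A,\varphi)}-g\in V$. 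As $(\sigma_j)$ ranges over $\{\prod_j\sigma_j=\mu_0\}$, the coefficient tuple $(\psi_j)$ ranges over $\{\prod_j\psi_j=c_0\}$, which, being a hypersurface not contained in any hyperplane, linearly spans $(\overline{\bbQ}_l)^{t}\cong\bbC^{t}$. Consequently, using finitely many such $(A,\varphi)$, all the functions $1_{(a^j\cO)^F}$ lie in $V$, which completes the induction and proves the lemma. The point demanding the most care is the claim that purity constrains the $\tilde F$-Weil structure only through $\prod_j\sigma_j$: for $t\ge2$, $\tilde F$ cyclically permutes the $\overline{a^j\cO}$, so every $\tilde F$-fixed point of $\supp A$ lies in $\bigcap_j\overline{a^j\cO}$, where $\tilde F$ acts block-cyclically and hence its eigenvalues on stalk cohomology — the only obstruction to mixedness — depend on the $\sigma_j$ solely through their product; this is precisely what provides the freedom needed to separate the orbits in a non-$a$-stable $\langle a\rangle$-orbit.
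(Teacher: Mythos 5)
Your proposal takes a genuinely different route from the paper's, and for good reason. The paper's own proof runs induction on the degeneration order on $\bfG_\nu^F$-orbits and, for a given $\cO=\bfG_\nu^F.x$, works directly with $(I_{\hat\cO},\overline\varphi)$ where $\hat\cO=\bfG_\nu.x$, justifying this by the assertion that $\hat\cO$ is stable under both $\tilde F$ and $a$. That assertion is not automatic: since $F(x)=x$ forces $\tilde F(\hat\cO)=a^{-1}(\hat\cO)$, asking $\hat\cO$ to be $\tilde F$-stable is exactly asking it to be $a$-stable, and this can fail. For instance, take $Q$ with two vertices $1,2$ and three arrows $h_1,h_2,h_3:1\to 2$, and let $a$ fix the vertices while cyclically permuting the arrows (admissible, $n=3$). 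With $\nu=(1,1)$, $\bfE_\nu^F$ consists of tuples $(t,t^q,t^{q^2})$ with $t\in\bbF_{q^3}$, and for generic $t\ne 0$ the line $\bfG_\nu.x$ is not $a$-stable, so $(I_{\hat\cO},\overline\varphi)$ is not even an object of $\tilde{\cD}_\nu$. Your argument sidesteps this by inducting on $\dim\cO$ and grouping the orbit closures into $\langle a\rangle$-orbits, taking $A=\bigoplus_j\mathrm{IC}(\overline{a^j\cO})$, so it covers exactly the case $t\geqslant 2$ that the paper's proof passes over; in this sense your proof is genuinely more robust. Two points in your write-up would benefit from sharpening. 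First, the assertion that mixedness of $(A,\xi)$ constrains $(\sigma_j)$ only through $\prod_j\sigma_j$ addresses $\tilde F$-mixedness, but membership in $\tilde\cD_\nu$ also requires $F$-mixedness of $(A,\varphi a^*(\xi))$, and since each $a^j\cO$ is $F$-stable this imposes a weight constraint on each diagonal scalar $\rho_j$ separately, not merely on their product; you should say explicitly that within these constraints the $\psi_j$ can still be twisted by arbitrary unit Weil numbers (e.g.\ roots of unity), which is enough to $\bbC$-span $\{\prod\psi_j=c_0\}$. Second, the sentence arguing $(A,\varphi)$ is not traceless is a red herring for the logic of the proof — what you actually use is the explicit stalk computation of $\chi_{(A,\varphi)}$, not non-tracelessness per se — and could simply be dropped.
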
 
\begin{proof}
We make an induction on the set of $\bfG^F_\nu$-orbits in $\bfE^F_\nu$ with respect to the partial order defined by the degeneration, that is, $\cO_1\preccurlyeq\cO_2$ if and only if their corresponding $\bfG_\nu$-orbits $\hat{\cO_1},\hat{\cO_2}$ in $\bfE_\nu$ satisfy $\hat{\cO_1}\subset \overline{\hat{\cO_2}}$. Note that the minimal orbits are closed and corresponding to the isomorphism classes of semisimple representations. 

Let $\hat{\cO}=\bfG_\nu.x$ be the $\bfG_\nu$-orbit of $x$ in $\bfE_\nu$ and $j:\hat{\cO}\rightarrow \bfE_\nu$ be the inclusion, consider the complexes 
$$S_{\hat{\cO}}=j_!(\overline{\bbQ}_l|_{\hat{\cO}}),\ I_{\hat{\cO}}=j_{!*}(\overline{\bbQ}_l|_{\hat{\cO}}),$$
where $I_{\hat{\cO}}$ is a $\bfG_\nu$-equivariant perverse sheaf whose support is $\overline{\hat{\cO}}$ and whose restriction to $\hat{\cO}$ is $\overline{\bbQ}_l|_{\hat{\cO}}$. Note that $\hat{\cO}$ is stable under $\tilde{F}:\bfE_\nu\rightarrow \bfE_\nu, a:\bfE_\nu\rightarrow \bfE_\nu$, there are natural isomorphisms $\xi:\tilde{F}^*(S_{\hat{\cO}})\rightarrow S_{\hat{\cO}},\varphi:a^*(S_{\hat{\cO}})\rightarrow S_{\hat{\cO}}$ such that $\chi_{(S_{\hat{\cO}},\varphi)}=1_{\cO}$. \\
$\bullet$ If $\cO$ is minimal, then $(S_{\hat{\cO}},\varphi)\in \tilde{\cD}_{\nu}$ is as desired. \\
$\bullet$ Otherwise, $\xi, \varphi$ induce isomorphisms $\overline{\xi}:\tilde{F}^*(I_{\hat{\cO}})\rightarrow I_{\hat{\cO}}, \overline{\varphi}:a^*(I_{\hat{\cO}})\rightarrow I_{\hat{\cO}}$ such that $(I_{\hat{\cO}},\overline{\varphi})\in \tilde{\cD}_{\nu}$ and $\chi_{(I_{\hat{\cO}},\overline{\varphi})}\in 1_{\cO}+\sum_{t=1}^l\bbC 1_{\cO_t}$, where $\cO_t$ are $\bfG^F_\nu$-orbits in $\bfE^F_\nu$ satisfying $\cO_t\not=\cO, \cO_t\preccurlyeq\cO$. By inductive hypothesis, each $1_{\cO_t}$ is a $\bbC$-linear combination of $\chi_{(A_s,\varphi_s)}$ for some $(A_s,\varphi_s)\in \tilde{\cD}_{\nu}$, then so is $1_\cO$, as desired.
\end{proof}

As a consequence, by (\ref{tilde(Delta)alghom}) and Proposition \ref{preserve structure constant}, for any $M,N\in \cA$, we have 
\begin{align*}
&\tilde{\Delta}(1_{\cO_M}\,\,\tilde{*}\,\,1_{\cO_N})=\tilde{\Delta}(1_{\cO_M})\,\,\tilde{*}\,\,\tilde{\Delta}(1_{\cO_N}),\\
&\Delta(u_{[M]}*u_{[N]})=\Delta(u_{[M]})*\Delta(u_{[N]}),
\end{align*}
which is equivalent to Green's formula for $\cA=\rep_k^{F_{Q,a}}(Q)$, see Theorem \ref{Green's formula}. 

Moreover, let $\Lambda$ be a finite-dimensional hereditary basic algebra over $\bbF_q$, we have the bialgebra isomorphism $\cH(\mod_{\bbF_q}\Lambda)\cong \cH(\rep_k^{F_{Q_\Lambda,a}}(Q_\Lambda))$, see section \ref{Hall algebra}, and so we also obtain Green's formula for $\mod_{\bbF_q}\Lambda$. 

\subsection*{Acknowledgement} 
J. Fang is partially supported by National Key R\&D Program of China (Grant No. 2020YFE0204200). Y. Lan is partially supported by National Natural Science Foundation of China (Grant No. 12288201). Y. Wu is partially supported by Natural Science Foundation of China (Grant No. 12031007). The authors would like to thank Jie Xiao for important discussions and helpful suggestions.

\bibliography{ref}

\end{document}